\theoremstyle{plain}
\newtheorem{theorem}{Theorem}[section]
\newtheorem{cor}[theorem]{Corollary}
\newtheorem{prop}[theorem]{Proposition}
\newtheorem{lemma}[theorem]{Lemma}
\theoremstyle{definition}
\newtheorem{definition}[theorem]{Definition}
\newtheorem{remark}[theorem]{Remark}
\newtheorem*{thank}{Acknowledgments}
\newcommand{\Deltaop}{{\bf \Delta}^{op}}
\newcommand{\SSets}{\mathcal{SS}ets}
\newcommand{\Map}{\text{Map}}
\newcommand{\map}{\text{map}}
\newcommand{\Hom}{\text{Hom}}
\newcommand{\colim}{\text{colim}}
\newcommand{\Sets}{\mathcal{S}ets}
\newcommand{\Secat}{\mathcal Se \mathcal Cat}
\newcommand{\css}{\mathcal{CSS}}
\newcommand{\Ho}{\text{Ho}}
\newcommand{\qcat}{\mathcal{QC}at}
\newcommand{\orbit}{\mathcal O^{op}_G}
\begin{document}

\title{Equivalence of models for equivariant $(\infty, 1)$-categories}

\author[J.E.\ Bergner]{Julia E.\ Bergner}

\email{bergnerj@member.ams.org}

\address{Department of Mathematics, University of California, Riverside, CA 92521}

\date{\today}

\subjclass[2010]{55U35, 55P91, 55U40, 18G55}

\keywords{$(\infty, 1)$-categories, equivariant homotopy theory}

\thanks{The author was partially supported by NSF grant DMS-1105766.  This research was done while the author was in residence at MSRI during Spring 2014, supported by NSF grant 0932078 000.}

\begin{abstract}
In this paper we show that the known models for $(\infty, 1)$-categories can all be extended to equivariant versions for any discrete group $G$.  We show that in two of the models we can also consider actions of any simplicial group $G$.
\end{abstract}

\maketitle

\section{Introduction}

Two areas of much recent research in homotopy theory have been the development and application of homotopical categories, or $(\infty, 1)$-categories, and equivariant homotopy theory.  In this paper, we seek to bring the two ideas together and investigate equivariant $(\infty, 1)$-categories.

There are many different models for $(\infty, 1)$-categories, all known to be equivalent in the sense that their respective model categories are all Quillen equivalent.  Simplicial or topological categories are categories with a simplicial set or space of morphisms between any two objects.  Segal categories and complete Segal spaces are bisimplicial sets having properties resembling the simplicial nerve of a simplicial category, but with a weaker form of composition.  Quasi-categories encode the same information in a simplicial set.  Each model for $(\infty, 1)$-categories can be thought of as living in its respective model category, in which a weak equivalence is defined by a homotopical version of equivalence of categories.

Given a group $G$, a $G$-equivariant $(\infty, 1)$-category should be one of the above structures equipped with an action of $G$.  The morphisms in the category of such should respect the $G$-action, and a weak equivalence should be a map which induces weak equivalences on $H$-fixed point objects for all (closed) subgroups $H$ of $G$.

To prove that each model for $(\infty, 1)$-categories has a good equivariant analogue, and that these models are in turn all equivalent to one another, we apply general tools of Stephan  \cite{stephan} and of Bohmann, Mazur, Osorno, Ozornova, Ponto, and Yarnall \cite{bmoopy}.  Their results give conditions under which a model category has a corresponding $G$-model category, and under which this process preserves Quillen equivalences.  A related approach has also been described by Dotto and Moi \cite{dm}.

For complete Segal spaces and quasi-categories, establishing that these conditions hold for an action by a discrete group $G$ is immediate from Stephan's results.  For simplicial categories and Segal categories, we give proofs here.  These conditions also guarantee that we can carry over all the Quillen equivalences that we have non-equivariantly.

When $G$ is a simplicial group, then we need the original model category to have the structure of a simplicial model category.  This additional assumption holds for the complete Segal space and Segal category models, so we establish equivariant versions in these contexts.

It is perhaps more interesting to consider the case where $G$ is a topological group, and in particular a compact Lie group.  In this case, we need to begin with a topological (rather than simplicial) model category, so it is expected that the right models are complete Segal spaces and Segal categories taken in topological spaces rather than in simplicial sets.  We pursue this perspective, as well as a more hands-on approach to the equivariant structures, in forthcoming work with Chadwick.  

We expect that many examples of $(\infty, 1)$-categories have natural group actions that can be investigated from this perspective.  For example, any monoidal $(\infty, 1)$-category has an action of the automorphism group of a unit object.

The methods that we use in this paper can be extended to more general settings, such as equivariant $(\infty, n)$-categories and equivariant $(\infty,1)$-operads.  Again, for many of the known models we get equivariant versions immediately from Stephan's results; for the generalizations of simplicial categories (categories enriched in $\Theta_n$-spaces and simplicial operads, respectively), the proofs can be obtained by modifying the one given here appropriately.  The framework of equivariant $(\infty, n)$-categories should give a context in which to understand equivariant (extended) topological field theories.  A further extension is to the 2-Segal spaces of Dyckerhoff and Kapranov \cite{dk}; as these structures often arise from $S_\bullet$-constructions there are a number of examples to be investigated with group actions incorporated.  We plan to look at such examples in future work.

We begin in Section 2 with a statement of the general theorems that we wish to use, in the case where $G$ is a discrete group.  In Section 3, we show that they can be applied immediately to the complete Segal space and quasi-category models. In Sections 4 and 5, we prove that they can be applied to the simplicial category and Segal category models.  Lastly, in Section 6 we consider simplicial groups $G$ and show that we get equivariant versions of the complete Segal space and Segal category models in this case.

\begin{thank}
The author would like to thank Anna Marie Bohmann, Ang\'elica Osorno, and Marc Stephan for several helpful conversations about this work.
\end{thank}

\section{Equivariant versions of model categories}

In this section, we summarize results of Stephan, namely, sufficient conditions under which a $G$-equivariant version of a cofibrantly generated model category $\mathcal C$ exists \cite{stephan}.  To begin, we assume that $G$ is discrete, and consider it as a category with a single object and morphisms given by the elements of $G$.

\begin{definition}
The \emph{category of} $G$-\emph{objects in} $\mathcal C$, denoted by $\mathcal C^G$, is the category of functors $G \rightarrow \mathcal C$.
\end{definition}

The key feature of a $G$-equivariant model structure, rather than just a model structure of objects equipped with a $G$-action, is that weak equivalences are defined using fixed point objects.

\begin{definition}
Given any subgroup $H$ of $G$, define the $H$-\emph{fixed points functor}, denoted by $(-)^H$, as the composite $\mathcal C^G \rightarrow \mathcal C^H \rightarrow \mathcal C$ where the first map is the restriction defined by the inclusion of $H$ into $G$, and the second map is the limit functor.
\end{definition}

\begin{definition}
Let $G$ be a discrete group.  The $G$-\emph{model structure} on $\mathcal C^G$ (if it exists) has weak equivalences the maps $X \rightarrow Y$ such that, for every subgroup $H$ of $G$, the map $X^H \rightarrow Y^H$ is a weak equivalence in $\mathcal C$, and likewise for the fibrations.
\end{definition}

\begin{remark}
Stephan works in the following more general setting.  Let $\mathcal F$ be a set of subgroups of $G$.  The $\mathcal F$-\emph{model structure} on $\mathcal C$ is defined similarly to the $G$-model structure, but where $H$ is taken to be an element of $\mathcal F$.  We are most interested in the $G$-model structure, but our results can be applied to the $\mathcal F$-model structure as well.
\end{remark}

Since $\mathcal C$ is a model category, and in particular is complete and cocomplete, we can think of it as being tensored and cotensored over the category of sets, as follows.  Let $A$ and $B$ be objects of $\mathcal C$ and $X$ a set.  Then there is a tensor
\[ X \otimes A = \coprod_X A \]
and a cotensor
\[ [X,B] = \prod_X B \]
such that there are isomorphisms
\[ \Hom_\mathcal C(X \otimes A, B) \cong \Hom_{\Sets}(X, \Hom_\mathcal C(A,B)) \cong \Hom_\mathcal C(A, [X,B]). \]

Let $A$ be an object of $\mathcal C$ and $H$ a subgroup of $G$.  Then we denote by $G/H \otimes A$ the object of $\mathcal C^G$ given composing the map $G \rightarrow \Sets$ given by sending $G$ to $G/H$ with the tensor map $- \otimes A \colon \Sets \rightarrow \mathcal C$.

\begin{lemma} \cite{stephan}
The functor $G/H \otimes - \colon \mathcal C \rightarrow \mathcal C^G$ is left adjoint to the fixed point functor $(-)^H$.
\end{lemma}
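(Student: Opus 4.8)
The plan is to verify the adjunction directly, by exhibiting a natural bijection
\[ \Hom_{\mathcal C^G}(G/H \otimes A, Y) \cong \Hom_{\mathcal C}(A, Y^H) \]
for every object $A$ of $\mathcal C$ and every $G$-object $Y$, and then checking naturality in both variables. Since objects of $\mathcal C^G$ are functors out of the one-object category $G$, a morphism $G/H \otimes A \to Y$ is a single morphism $\phi \colon \coprod_{G/H} A \to Y$ in $\mathcal C$ that intertwines the two $G$-actions. So the first step is to unwind the source object: by definition $G/H \otimes A$ is the composite of the functor $G \to \Sets$ sending the object to $G/H$ (with $G$ acting by left translation on cosets) with the tensor $- \otimes A$, so its underlying object is $\coprod_{G/H} A$ and an element $g' \in G$ acts by the map permuting the summands according to $gH \mapsto g'gH$.

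Next I would use the universal property of the coproduct to rewrite $\phi$ as a family $(\phi_{gH} \colon A \to Y)_{gH \in G/H}$, and translate the $G$-equivariance of $\phi$ into the relations $\phi_{g'gH} = g' \cdot \phi_{gH}$ for all $g, g' \in G$, where $g' \cdot (-)$ denotes post-composition with the action of $g'$ on $Y$. Taking $gH$ to be the identity coset shows that the whole family is determined by the single morphism $\phi_{eH} \colon A \to Y$ via $\phi_{g'H} = g' \cdot \phi_{eH}$; conversely, any choice of $\phi_{eH}$ determines a family by this formula. The only constraint is that this assignment be well defined on cosets: for $h \in H$ we have $hH = eH$, so we are forced to require $h \cdot \phi_{eH} = \phi_{eH}$ for all $h \in H$.

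The crux of the argument is then to recognize this last condition. A map $\phi_{eH} \colon A \to Y$ satisfying $h \cdot \phi_{eH} = \phi_{eH}$ for all $h \in H$ is exactly an $H$-invariant morphism, and since $Y^H$ is by definition the limit of the restriction of $Y$ along $H \hookrightarrow G$, the universal property of the limit identifies such morphisms with morphisms $A \to Y^H$. This is the step I expect to require the most care, since it is precisely where the definition of the fixed point functor as a limit (rather than as a naive subobject) is used: concretely, the $H$-diagram applied to $Y$ has limiting cone given by the canonical map $Y^H \to Y$, so that a cone out of $A$, i.e. an $H$-invariant map $A \to Y$, factors uniquely through $Y^H$. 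Composing the two bijections yields the claimed correspondence $\phi \mapsto \phi_{eH}$.

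Finally I would check naturality: given $A' \to A$ in $\mathcal C$ and $Y \to Y'$ in $\mathcal C^G$, both sides transform compatibly because precomposition and postcomposition commute with the summand indexing and with passage to the limit, so this is routine. As an alternative, one could argue more abstractly by factoring $(-)^H$ as the restriction $\mathcal C^G \to \mathcal C^H$ followed by the limit $\mathcal C^H \to \mathcal C$, observing that the restriction has a left adjoint (induction, realized as a left Kan extension along $H \hookrightarrow G$, which exists since $\mathcal C$ is cocomplete) and that the limit has the trivial-action functor as left adjoint, then identifying the composite of these two left adjoints applied to $A$ with $G/H \otimes A$. The direct computation above is more transparent, however, so I would present that one.
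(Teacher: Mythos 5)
Your proof is correct. Note that the paper itself offers no argument for this lemma at all---it is quoted from Stephan's paper \cite{stephan}---so there is no internal proof to compare against; your write-up supplies the missing details. Your direct verification is the standard one and is sound at every step: a map in $\mathcal C^G$ out of $G/H \otimes A = \coprod_{G/H} A$ is a coproduct-indexed family $(\phi_{gH})$ subject to $\phi_{g'gH} = g' \cdot \phi_{gH}$, the family is determined by $\phi_{eH}$, well-definedness on cosets forces exactly the $H$-invariance $h \cdot \phi_{eH} = \phi_{eH}$, and the universal property of the limit over the one-object category $H$ identifies $H$-invariant maps $A \to Y$ with maps $A \to Y^H$. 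You are right to flag that last step as the crux, since the paper defines $(-)^H$ as a limit rather than a subobject, and your identification of cones over the $BH$-shaped diagram with invariant maps is precisely what makes it work. Your abstract alternative---factoring $(-)^H$ as $\lim \circ \operatorname{res}^G_H$ and composing the left adjoints (left Kan extension along $H \hookrightarrow G$, and the trivial-action functor)---is equally valid and arguably generalizes more readily (e.g.\ to the simplicial-group setting of Section 6), but for a discrete group the hands-on computation you chose is indeed the more transparent presentation.
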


We use the following criteria for determining the existence of the $G$-model structure on $\mathcal C^G$.  Here, we use the formulation given by \cite{bmoopy} which is slightly different from the original in \cite{stephan}.

\begin{theorem} \cite{stephan}, \cite{bmoopy} \label{cellular}
Let $G$ be a group, and let $\mathcal C$ be a cofibrantly generated model category.  Suppose that, for each subgroup $H$ of $G$, the fixed point functor $(-)^H$ satisfies the following cellularity conditions:
\begin{enumerate}
\item \label{cell1} the functor $(-)^H$ preserves filtered colimits of diagrams in $\mathcal C^G$,

\item \label{cell2} the functor $(-)^H$ preserves pushouts of diagrams where one arrow is of the form
\[ G/K \otimes f \colon G/K \otimes A \rightarrow G/K \otimes B \]
for some subgroup $K$ of $G$ and $f$ a generating cofibration of $\mathcal C$, and

\item \label{cell3} for any subgroup $K$ of $G$ and object $A$ of $\mathcal C$, the induced map
\[ (G/H)^K \otimes A \rightarrow (G/H \otimes A)^K \]
is an isomorphism in $\mathcal C$.
\end{enumerate}
Then the category $\mathcal C^G$ admits the $G$-model structure.
\end{theorem}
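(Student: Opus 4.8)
The plan is to obtain the $G$-model structure by transferring the cofibrantly generated structure on $\mathcal{C}$ across the adjunctions of the preceding lemma, via the standard recognition theorem for cofibrantly generated model categories (Kan's lemma). Writing $I$ and $J$ for chosen sets of generating cofibrations and generating trivial cofibrations of $\mathcal{C}$, I would take as candidate generating sets for $\mathcal{C}^G$ the collections
\[ I_G = \{ G/K \otimes i \mid K \leq G,\ i \in I \}, \qquad J_G = \{ G/K \otimes j \mid K \leq G,\ j \in J \}. \]
To handle all the fixed-point functors simultaneously, it is convenient to consider the product model category $\prod_{H \leq G} \mathcal{C}$ and the functor $U \colon \mathcal{C}^G \to \prod_{H} \mathcal{C}$ sending $X$ to $(X^H)_H$; its left adjoint sends a family $(A_H)_H$ to $\coprod_H (G/H \otimes A_H)$. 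Under this single adjunction the weak equivalences and fibrations prescribed for the $G$-model structure are exactly those maps created by $U$, while the left adjoint carries the generating cofibrations and generating trivial cofibrations of the product to $I_G$ and $J_G$ respectively. The recognition theorem then reduces the proof to two assertions: that $I_G$ and $J_G$ permit the small object argument, and that $U$ sends every relative $J_G$-cell complex to a weak equivalence, that is, that each $(-)^H$ does.

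For the smallness, I would check that the domains of the maps in $I_G$ and $J_G$ are small relative to the respective cellular subcategories. By the adjunction of the preceding lemma, $\Hom_{\mathcal{C}^G}(G/K \otimes A, X) \cong \Hom_{\mathcal{C}}(A, X^K)$, so smallness of $G/K \otimes A$ against a filtered colimit in $\mathcal{C}^G$ reduces, after applying $(-)^K$, to smallness of $A$ against a filtered colimit in $\mathcal{C}$. Condition \ref{cell1} is exactly what allows the fixed-point functor to be brought inside the filtered colimit, and smallness of $A$ is available because $\mathcal{C}$ is cofibrantly generated.

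The substance of the argument, and the step I expect to be the main obstacle, is the acyclicity assertion: for every subgroup $H$, the functor $(-)^H$ must send a relative $J_G$-cell complex to a weak equivalence of $\mathcal{C}$. Such a complex is a transfinite composite of pushouts of coproducts of maps $G/K \otimes j$ with $j \in J$, so the crux is to understand $(-)^H$ on these cells and to commute it past the colimits that assemble them. Condition \ref{cell3} gives $(G/K \otimes A)^H \cong (G/K)^H \otimes A$ naturally in $A$, whence $(G/K \otimes j)^H$ is identified with $(G/K)^H \otimes j = \coprod_{(G/K)^H} j$, a coproduct of copies of the trivial cofibration $j$ and hence itself a trivial cofibration of $\mathcal{C}$. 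Condition \ref{cell2} then ensures that $(-)^H$ preserves the pushouts used to attach these cells and condition \ref{cell1} that it preserves the transfinite composites, so the image under $(-)^H$ of the entire relative $J_G$-cell complex is built from trivial cofibrations of $\mathcal{C}$ by coproduct, pushout, and transfinite composition. As these operations preserve trivial cofibrations, the result is a trivial cofibration, in particular a weak equivalence. Granting smallness and acyclicity, the recognition theorem delivers the cofibrantly generated $G$-model structure with generating sets $I_G$ and $J_G$. The delicate point throughout is that the three cellularity conditions are precisely what force each fixed-point functor to commute with exactly the colimits appearing in the small object argument, and it is the interaction of conditions \ref{cell2} and \ref{cell3} in the cell-attachment step that makes the acyclicity work.
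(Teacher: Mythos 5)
Your overall strategy is the right one, and it is essentially how this result is proved in the sources the paper cites (note that the paper itself gives no proof of this theorem; it quotes it from Stephan and from Bohmann et al.): right-induce the model structure along the adjunction between $\prod_{H \le G} \mathcal{C}$ and $\mathcal{C}^G$, with generating sets $I_G$ and $J_G$, and check smallness and acyclicity. However, there is a genuine gap at the crux of your acyclicity argument. A relative $J_G$-cell complex is built by pushouts along maps $G/K \otimes j$ with $j$ a generating \emph{trivial} cofibration, whereas condition (2), as stated, only guarantees that $(-)^H$ preserves pushouts along $G/K \otimes f$ with $f$ a generating \emph{cofibration}. You invoke condition (2) for the $J$-cells as if it covered them; it does not. (This is precisely where the formulation used here, due to Bohmann et al., differs from Stephan's original, whose pushout condition explicitly includes the generating acyclic cofibrations --- which is what your argument actually needs.) The missing step is a lemma: conditions (1)--(3) together imply that $(-)^H$ preserves pushouts along $G/K \otimes f$ for \emph{every} cofibration $f$ of $\mathcal{C}$. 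To prove it, write $f$ as a retract of a relative $I$-cell complex $g$; since $G/K \otimes -$ is a left adjoint, $G/K \otimes g$ inherits a cell decomposition, so a pushout along $G/K \otimes g$ decomposes as a transfinite composition of pushouts along maps $G/K \otimes i$ with $i \in I$, each preserved by condition (2), with the limit stages preserved by condition (1); finally, the pushout-comparison map for $G/K \otimes f$ is a retract of the (invertible) comparison map for $G/K \otimes g$, and a retract of an isomorphism is an isomorphism. Since generating trivial cofibrations are in particular cofibrations, this closes the gap.

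A second, smaller gap of the same nature sits in your smallness argument. Smallness of $A$ in a cofibrantly generated $\mathcal{C}$ is guaranteed only relative to relative $I$-cell complexes, not against arbitrary filtered colimits; so after using condition (1) to move $(-)^K$ inside the colimit, you still need to know that the resulting sequence in $\mathcal{C}$, namely the maps $(X_\beta)^K \to (X_{\beta+1})^K$, consists of relative $I$-cell complexes. This follows from the same cell analysis as above (conditions (1)--(3) show that $(-)^K$ sends relative $I_G$-cell complexes to relative $I$-cell complexes), but it must be said explicitly: it is the substantive content of the cellularity conditions rather than a formal consequence of the adjunction isomorphism $\Hom_{\mathcal{C}^G}(G/K \otimes A, X) \cong \Hom_{\mathcal{C}}(A, X^K)$.
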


This result was further strengthened by Bohmann, Mazur, Osorno, Ozornova, Ponto, and Yarnall via the following result.

\begin{theorem} \cite{bmoopy} \label{qe}
Suppose that $F \colon \mathcal M \rightleftarrows \mathcal N \colon R$ be a Quillen equivalence between model categories satisfying the cellularity conditions of Theorem \ref{cellular}.  Then there is an induced Quillen equivalence $F^G \colon \mathcal M^G \rightleftarrows \mathcal N^G \colon R^G$.
\end{theorem}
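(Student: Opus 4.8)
The plan is to construct the adjunction $F^G \dashv R^G$ objectwise and to verify in turn that it is a Quillen pair and then a Quillen equivalence, exploiting throughout that both weak equivalences and fibrations in the $G$-model structures are detected by the fixed-point functors $(-)^H$. Since a $G$-object is a functor out of $G$, postcomposing with $F$ and with $R$ gives functors $F^G\colon \mathcal M^G \to \mathcal N^G$ and $R^G\colon \mathcal N^G \to \mathcal M^G$, and the adjunction $F \dashv R$ induces a levelwise adjunction $F^G \dashv R^G$ with unit and counit computed objectwise. The structural fact I would isolate first is that, because $R$ is a right adjoint and so preserves the limits defining fixed points, $R^G$ commutes with fixed points: for every $Y \in \mathcal N^G$ and every subgroup $H$ there is a natural isomorphism $(R^G Y)^H \cong R(Y^H)$. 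The Quillen pair condition then follows at once: if $g$ is a fibration (respectively a trivial fibration) in $\mathcal N^G$, each $g^H$ is one in $\mathcal N$ by definition of the $G$-model structure, hence each $(R^G g)^H \cong R(g^H)$ is one in $\mathcal M$ since $R$ is right Quillen, and therefore $R^G g$ is a fibration (respectively trivial fibration) in $\mathcal M^G$.

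For the equivalence I would apply the standard criterion that a Quillen pair is a Quillen equivalence exactly when $R^G$ reflects weak equivalences between fibrant objects and, for every cofibrant $A \in \mathcal M^G$, the derived unit $A \to R^G(\mathbb F F^G A)$ is a weak equivalence, where $\mathbb F$ denotes fibrant replacement in $\mathcal N^G$. The reflection property is the easy half. If $g\colon Y \to Y'$ is a map of fibrant objects of $\mathcal N^G$ with $R^G g$ a weak equivalence, then for each $H$ the objects $Y^H$ and $(Y')^H$ are fibrant in $\mathcal N$, and $R(g^H) \cong (R^G g)^H$ is a weak equivalence, so $g^H$ is a weak equivalence because $R$ reflects weak equivalences between fibrant objects; as this holds for all $H$, the map $g$ is a $G$-weak equivalence.

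The derived unit is where the cellularity conditions of Theorem \ref{cellular} re-enter, and I would begin with the key lemma that $(-)^H\colon \mathcal M^G \to \mathcal M$ sends cofibrant objects to cofibrant objects. The $G$-model structure is cofibrantly generated by the maps $G/K \otimes f$ for $f$ a generating cofibration of $\mathcal M$, and applying $(-)^H$ to a cell complex built from these produces, by conditions \eqref{cell1}, \eqref{cell2}, and \eqref{cell3}, a cell complex in $\mathcal M$ whose attaching maps are coproducts of generating cofibrations --- here condition \eqref{cell3} identifies each $(G/K \otimes A)^H$ with $\coprod_{(G/K)^H} A$ --- so the result is cofibrant. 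Now fix a cofibrant $A$ and a fibrant replacement $j\colon F^G A \to Z$ in $\mathcal N^G$. Applying $(-)^H$ to the derived unit and using the isomorphism $(R^G Z)^H \cong R(Z^H)$, a triangle-identity computation identifies it with the adjoint of the composite $F(A^H) \xrightarrow{c_A} (F^G A)^H \xrightarrow{j^H} Z^H$ in $\mathcal N$, where $c_A$ is the canonical comparison map obtained by applying $F$ to the $H$-equivariant map $A^H \to A$ and factoring through the fixed points of $F^G A$. By the key lemma $A^H$ is cofibrant and by fibrancy detection $Z^H$ is fibrant, so the non-equivariant Quillen equivalence reduces the claim to showing that this composite is a weak equivalence in $\mathcal N$.

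Since $j^H$ is a weak equivalence, everything comes down to the comparison map $c_A\colon F(A^H) \to (F^G A)^H$, and proving that it is a weak equivalence for all cofibrant $A$ is the main obstacle, as it is precisely the failure of the left adjoint to commute with fixed points that must be controlled. On a free cell $A = G/K \otimes c$ the map $c_A$ is an isomorphism: $F$ preserves coproducts, so $F^G(G/K \otimes c) \cong G/K \otimes Fc$, and condition \eqref{cell3} then identifies both $F(A^H)$ and $(F^G A)^H$ with $\coprod_{(G/K)^H} Fc$. I would promote this to arbitrary cofibrant $A$ by induction over the cell structure, using that $F$ preserves all colimits while $(-)^H$ preserves the relevant filtered colimits and pushouts by conditions \eqref{cell1} and \eqref{cell2}, together with a gluing argument to pass through the homotopy pushouts. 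Combining the reflection property of the previous step with this computation of the derived unit yields that $F^G \dashv R^G$ is a Quillen equivalence.
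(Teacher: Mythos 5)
A preliminary remark: the paper does not prove Theorem \ref{qe} at all---it is quoted from \cite{bmoopy}---so there is no internal proof to compare yours against, and your argument has to be judged on its own terms. (Note that the literature route, consistent with the machinery of Theorem \ref{grouporbit}, is quite different from yours: prolong the Quillen equivalence levelwise to the projective model structures on $\mathcal O_G^{op}$-diagrams, where the statement is standard, and transport it across the Elmendorf--Stephan equivalences by two-out-of-three for Quillen equivalences; the commutativity of the relevant square of right adjoints is exactly your structural isomorphism $(R^G Y)^H \cong R(Y^H)$.) Most of your direct approach is sound: the Quillen-pair step, the reflection of weak equivalences between fibrant objects, the lemma that $(-)^H$ carries cofibrant objects of $\mathcal M^G$ to cofibrant objects of $\mathcal M$, and the mate computation reducing the derived unit to the composite $j^H \circ c_A$ are all correct.

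The genuine gap is in the final induction, precisely where you locate the main obstacle. To run that induction you must commute $(-)^H \colon \mathcal N^G \to \mathcal N$ past the pushouts defining the cell structure of $F^G A$, and those pushouts are along maps $G/K \otimes Ff$ with $f$ a generating cofibration of $\mathcal M$. Condition \eqref{cell2}, which you cite for this, applies only to $G/K \otimes g$ with $g$ a generating cofibration of $\mathcal N$; the map $Ff$ is a cofibration of $\mathcal N$ (since $F$ is left Quillen) but bears no relation to the chosen generators of $\mathcal N$, so the inductive step fails as written. The repair is an extension lemma: under the hypotheses of Theorem \ref{cellular}, the functor $(-)^H$ preserves pushouts along $G/K \otimes g$ for \emph{every} cofibration $g$ of $\mathcal N$. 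One proves this first for $g$ a relative cell complex, by decomposing the pushout along $G/K \otimes g$ into a transfinite composition of pushouts along maps $G/K \otimes (\text{generating cofibration})$ and applying conditions \eqref{cell1} and \eqref{cell2}; a general cofibration is a retract, under its domain, of a relative cell complex, and since the comparison map from the pushout of fixed points to the fixed points of the pushout is natural, it is then a retract of an isomorphism, hence an isomorphism. With this lemma in hand your induction not only closes but shows that $c_A$ is an isomorphism for cellular $A$, hence for every cofibrant $A$ by a further retract argument, so the gluing-lemma step you anticipate for homotopy pushouts is not needed.
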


In fact, we have the stronger result that these model categories are actually Quillen equivalent to those described via orbit diagrams, generalizing a result of Elmendorf for topological spaces \cite{elm}.

\begin{definition}
The \emph{orbit category of} $G$ is the full subcategory $\mathcal O_G$ of the category of $G$-sets with objects the orbits $G/H$ where $H$ is a subgroup of $G$.
\end{definition}

Consider the category of functors $\mathcal O_G^{op} \rightarrow \mathcal C$, equipped with the projective model structure, where weak equivalences and fibrations are defined levelwise.  There is a functor $i \colon G \rightarrow \mathcal O_G^{op}$ sending the single object of $G$ to $G/\{e\}$ and a morphism $g$ of $G$ to the $G$-map $G/\{e\} \rightarrow G/\{e\}$ defined by $h \mapsto hg$.  The induced map $i^* \colon \mathcal G^{\orbit} \rightarrow \mathcal C^G$ has a left adjoint $i_*$ given by left Kan extension.

\begin{theorem} \cite{stephan} \label{grouporbit}
Let $G$ be a group, and let $\mathcal C$ be a cofibrantly generated model category.  Suppose that, for each subgroup $H$ of $G$, the fixed point functor $(-)^H$ satisfies the cellularity conditions of Theorem \ref{cellular}.
Then there is a Quillen equivalence
\[ i^\ast \colon \mathcal C^{\mathcal O_G^{op}} \rightleftarrows \mathcal C^G \colon i_\ast. \]
\end{theorem}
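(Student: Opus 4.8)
The plan is to realize this as an instance of the classical Elmendorf theorem, reducing the assertion that $(i^\ast, i_\ast)$ is a Quillen equivalence to a single statement about the unit on cofibrant objects, which is exactly where the three cellularity conditions of Theorem~\ref{cellular} enter. First I would pin down the two functors concretely. The left adjoint $i^\ast$ is restriction along $i$, so $i^\ast D = D(G/\{e\})$ equipped with its natural $G$-action, while its right adjoint $i_\ast$ is computed as the fixed-point diagram $(i_\ast X)(G/H) \cong X^H$. I would also record two elementary identifications: the set of $G$-maps $\mathcal O_G(G/H, G/K)$ is naturally the fixed-point set $(G/K)^H$, and consequently, for a generating projective cofibration $\mathcal O_G(-,G/K) \otimes f$ of $\mathcal C^{\mathcal O_G^{op}}$ with $f$ a generating cofibration of $\mathcal C$, one has $i^\ast\bigl(\mathcal O_G(-,G/K) \otimes f\bigr) \cong G/K \otimes f$, which is precisely the shape of map appearing in condition \eqref{cell2}.

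Next I would check that $(i^\ast, i_\ast)$ is a Quillen adjunction. Because $(i_\ast X)(G/H) = X^H$, and fibrations and weak equivalences in the projective structure are detected levelwise while those in the $G$-model structure are detected on fixed points, the functor $i_\ast$ carries fibrations to fibrations and weak equivalences to weak equivalences essentially by the definitions; in particular it preserves trivial fibrations, so $i^\ast$ is left Quillen. The same observation yields the stronger fact that $i_\ast$ both preserves and reflects \emph{all} weak equivalences, and it is this property that drives the rest of the argument.

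The reduction to the unit is then formal. The counit $i^\ast i_\ast X \to X$ is the identification $X^{\{e\}} = X$, hence an isomorphism. Given a cofibrant $D$ and a fibrant $X$, a map $\phi\colon i^\ast D \to X$ and its adjunct $\psi\colon D \to i_\ast X$ are related by $\psi = (i_\ast\phi)\circ \eta_D$, where $\eta_D\colon D \to i_\ast i^\ast D$ is the unit. Thus once $\eta_D$ is known to be a weak equivalence, two-out-of-three together with the fact that $i_\ast$ preserves and reflects weak equivalences shows that $\phi$ is a weak equivalence if and only if $\psi$ is. Since this is the defining condition for a Quillen equivalence, everything comes down to proving that $\eta_D$ is a weak equivalence for every cofibrant $D$.

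This last point is the main obstacle, and it is where all three cellularity conditions are used. As cofibrant objects are retracts of cell complexes built from the generating cofibrations $\mathcal O_G(-,G/K)\otimes f$, and weak equivalences are closed under retracts, it suffices to treat cell complexes. On a single generating cell the unit $\eta$ evaluated at $G/H$ compares $\mathcal O_G(G/H,G/K)\otimes A = (G/K)^H \otimes A$ with $\bigl(i^\ast(\mathcal O_G(-,G/K)\otimes A)\bigr)^H = (G/K \otimes A)^H$, and condition \eqref{cell3} says precisely that this comparison is an isomorphism. For the inductive step, $i^\ast$ is a left adjoint and so preserves the pushouts attaching cells, turning the attaching leg into a map of the form $G/K \otimes f$; condition \eqref{cell2} then guarantees that the fixed-point functors, and hence $i_\ast i^\ast$, preserve these pushouts levelwise, so a gluing-lemma argument (using that the attaching maps are cofibrations between cofibrant objects) propagates the weak equivalence $\eta$ across each cell attachment. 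Finally, condition \eqref{cell1} ensures that $i_\ast i^\ast$ commutes with the transfinite composites and filtered colimits assembling the cell complex, so that $\eta$ on the colimit is a filtered colimit of weak equivalences along cofibrations, and hence itself a weak equivalence. Verifying that these three conditions are exactly what is needed to transport the unit through the cellular construction is the technical heart of the proof; the surrounding machinery is the formal argument above.
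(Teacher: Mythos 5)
The first thing to note is that the paper does not prove Theorem \ref{grouporbit} at all: it is imported wholesale from Stephan \cite{stephan}, so there is no in-paper argument to compare yours against. What you have written is in effect a reconstruction of Stephan's model-categorical Elmendorf theorem, and your architecture agrees with his: identify $i^*$ as evaluation at $G/\{e\}$ and $i_*$ as the fixed-point diagram functor $(i_*X)(G/H) \cong X^H$ (your identification of $i_*$ as the \emph{right} adjoint, computed as a right Kan extension, is the correct one, and is what the displayed adjunction requires; the paper's prose calling $i_*$ a left adjoint given by left Kan extension is a slip); observe that $i_*$ takes fibrations to levelwise fibrations and both preserves and reflects all weak equivalences by the very definitions of the two model structures; note the counit is an isomorphism; reduce the Quillen-equivalence condition, via two-out-of-three, to the unit $\eta_D$ being a weak equivalence for cofibrant $D$; and consume conditions \eqref{cell1}, \eqref{cell2}, \eqref{cell3} in an induction over the cell structure of $D$. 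That is exactly where these conditions are used in Stephan's proof as well.

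The one step that does not hold up as written is the inductive step, where you invoke ``a gluing-lemma argument (using that the attaching maps are cofibrations between cofibrant objects).'' The cube/gluing lemma needs the source objects of both pushout diagrams to be cofibrant (or needs left properness, which is not assumed), and the domains $\mathcal O_G(-,G/K)\otimes A$ of the generating projective cofibrations are cofibrant only when $A$ is cofibrant in $\mathcal C$ --- but domains of generating cofibrations of a cofibrantly generated model category need not be cofibrant. Concretely, after using \eqref{cell3} to trivialize the two free legs, your comparison map on pushouts is a cobase change of $\eta_{D_n}$ along a cofibration, and a cobase change of a mere weak equivalence along a cofibration need not be a weak equivalence in this generality. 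Fortunately the repair is already contained in your own setup: condition \eqref{cell3} shows $\eta$ is an \emph{isomorphism} on the free cells, condition \eqref{cell2} shows $i_*i^*$ preserves the attaching pushouts on the nose, and condition \eqref{cell1} does the same for the transfinite compositions. So the induction should be run with ``isomorphism'' rather than ``weak equivalence'' as the invariant: a pushout or transfinite composite of comparison maps all of which are isomorphisms is an isomorphism, with no homotopical hypotheses needed, and the unit on an arbitrary cofibrant $D$ is then an isomorphism because, by naturality, it is a retract of the unit on a cell complex. With that substitution your proof is complete, and it is essentially Stephan's.
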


\begin{remark}
If one instead uses the $\mathcal F$-model structure, for a set $\mathcal F$ of subgroups of $G$, then it is necessary to assume that the trivial subgroup of $G$ is included in $\mathcal F$, so that the functor $i$ is defined.
\end{remark}

Combining this result with that of Theorem \ref{qe}, we get a commutative square of Quillen equivalent model categories associated to a Quillen equivalence $\mathcal C \rightleftarrows \mathcal D$ and a group $G$:
\[ \xymatrix{\mathcal C^G \ar@<.5ex>[r] \ar@<.5ex>[d] & \mathcal D^G \ar@<.5ex>[l] \ar@<.5ex>[d] \\
\mathcal C^{\mathcal O^{op}_G} \ar@<.5ex>[u] \ar@<.5ex>[r] & \mathcal D^{\mathcal O^{op}_G}. \ar@<.5ex>[u] \ar@<.5ex>[l] } \]

We conclude this section with two important families of examples.

\begin{theorem} \cite{stephan} \label{functors}
Any category of functors $\mathcal D \rightarrow \Sets$, equipped with a cofibrantly generated model structure in which the cofibrations are monomorphisms, admits the $G$-model structure.
\end{theorem}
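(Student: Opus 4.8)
The plan is to verify the three cellularity conditions of Theorem \ref{cellular} for the category $\mathcal{C} = \Sets^{\mathcal{D}}$ of functors $\mathcal{D} \to \Sets$. The key observation throughout is that limits and colimits in a functor category are computed objectwise, and that for functor categories valued in $\Sets$ these limits and colimits have very explicit descriptions. I would begin by unwinding what the tensor $G/K \otimes A$ and the fixed point functors $(-)^H$ look like concretely in this setting. Since $\mathcal{C}^G$ is the category of functors $G \to \Sets^{\mathcal{D}}$, which we may identify with $(\Sets^G)^{\mathcal{D}}$ (functors $\mathcal{D}$ into $G$-sets), evaluation at each object $d$ of $\mathcal{D}$ gives a functor to $\Sets^G$ that commutes with the fixed-point construction. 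So all three conditions can be checked objectwise in $\mathcal{D}$, reducing everything to statements about $G$-sets.

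After this reduction, I would treat the three conditions in turn. For condition \eqref{cell1}, the $H$-fixed points functor on $G$-sets is a right adjoint, but the point is that it nonetheless commutes with filtered colimits: a filtered colimit of $G$-sets is computed as the underlying filtered colimit with the induced action, and an element fixed by $H$ in the colimit is represented, by finiteness of $H$-orbits of the relevant index data, by a fixed element at some stage. I would make this precise using that $H$-fixed points of a $G$-set $X$ is $\Hom_{G}(G/H, X)$ and that $G/H$ is a finitely-presented object in $\Sets^G$ when $H$ has finite index, handling the general case by the explicit filtered-colimit description of sets. For condition \eqref{cell3}, I would compute both sides directly: $(G/H)^K \otimes A = \coprod_{(G/H)^K} A$, while $(G/H \otimes A)^K$ is the $K$-fixed points of $\coprod_{G/H} A$ with its $G$-action, and since the coproduct is indexed by the $G$-set $G/H$, its $K$-fixed points are exactly the copies indexed by $(G/H)^K$, giving the desired isomorphism.

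The main obstacle, and the heart of the argument, will be condition \eqref{cell2}: that $(-)^H$ preserves pushouts along maps of the form $G/K \otimes f$ where $f$ is a generating cofibration. Here the hypothesis that the cofibrations are monomorphisms becomes essential. The difficulty is that fixed points do not commute with arbitrary pushouts of $G$-sets, but they do behave well with pushouts along monomorphisms, because such a pushout is computed as a disjoint union with identifications along the injective map, and the $H$-fixed points of such a gluing can be analyzed orbit by orbit. I would argue that, since $f$ is a monomorphism, the map $G/K \otimes f$ is a monomorphism of $G$-objects at each level of $\mathcal{D}$, and that for a pushout of $G$-sets along a monomorphism the $H$-fixed points of the pushout coincide with the pushout of the $H$-fixed points. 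The essential input is that taking $H$-orbits and the monomorphism together ensure no two non-fixed elements are identified into a fixed one, so the colimit and the fixed-point functor commute.

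Finally, having verified the three cellularity conditions, the existence of the $G$-model structure follows immediately by applying Theorem \ref{cellular}. I would remark that the monomorphism hypothesis is used only in condition \eqref{cell2}, and that the first and third conditions hold for any functor category into $\Sets$ regardless of the cofibration structure, which clarifies exactly where the hypothesis is needed.
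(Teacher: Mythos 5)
Your overall strategy --- identify $(\Sets^{\mathcal D})^G$ with $\mathcal D$-diagrams of $G$-sets, check the three cellularity conditions objectwise, and then invoke Theorem \ref{cellular} --- is the right one; the paper itself gives no proof (it cites Stephan), and Stephan's argument proceeds in exactly this way. Your treatment of conditions \eqref{cell2} and \eqref{cell3} is correct in substance: for a pushout of $G$-sets along an equivariant monomorphism $A \rightarrow B$, the complement $B \setminus A$ is a $G$-stable subset, the pushout decomposes as $C \sqcup (B \setminus A)$, and $(-)^H$ preserves this decomposition, which gives \eqref{cell2}; condition \eqref{cell3} is the direct computation you describe.

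The genuine gap is in condition \eqref{cell1}. Your claim that $(-)^H$ commutes with \emph{all} filtered colimits of $G$-sets is false when $H$ is not finitely generated, and your proposed repair (``$G/H$ is finitely presented when $H$ has finite index'') does not help: finite index is not the relevant condition. Concretely, let $G = H = \bigoplus_{n \geq 1} \mathbb{Z}/2$ and $H_n = \bigoplus_{k \leq n} \mathbb{Z}/2$. The sequential diagram of quotient maps $G/H_1 \rightarrow G/H_2 \rightarrow \cdots$ has $\colim_n G/H_n \cong G/G = \ast$, which is $H$-fixed, while $(G/H_n)^H = \varnothing$ for every $n$; note that here $H = G$ has index one. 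So a fixed point of the colimit need not be represented by a fixed point at any stage, and condition \eqref{cell1}, read literally as preservation of all filtered colimits, simply cannot be verified for arbitrary discrete $G$. (Your argument would be fine if every subgroup of $G$ were finitely generated, since then $(-)^H$ is a finite equalizer and commutes with filtered colimits; but the theorem makes no such assumption.) The resolution --- and the point where your closing remark gets the logic backwards --- is that the cellularity condition actually needed, and the one in Stephan's paper, only concerns filtered colimits of diagrams whose underlying maps are \emph{cofibrations}. This is exactly where the hypothesis of the theorem enters: cofibrations are monomorphisms, hence objectwise injections, so such a filtered colimit is an increasing union into which each stage injects equivariantly, and every $H$-fixed element of the colimit is then an $H$-fixed element at some stage. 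Thus the monomorphism hypothesis is doing essential work in \eqref{cell1} as well as in \eqref{cell2}, contrary to your assertion that \eqref{cell1} holds ``regardless of the cofibration structure'' (that assertion is correct only for \eqref{cell3}).
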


Observe that we can apply the above theorem to any category of functors $\mathcal D \rightarrow \SSets$ simply by viewing the objects instead as functors $\mathcal D \times \Deltaop \rightarrow \Sets$.

\begin{theorem} \cite{stephan} \label{localization}
Let $\mathcal C$ be a model category which satisfies the cellularity conditions of Theorem \ref{cellular}.  Then any left Bousfield localization of $\mathcal C$ also satisfies the cellularity conditions, and hence admits the $G$-model structure.
\end{theorem}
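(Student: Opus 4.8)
The plan is to exploit the fact that the three cellularity conditions of Theorem \ref{cellular} make no reference whatsoever to the weak equivalences or fibrations of $\mathcal C$: they are statements about the underlying category together with its tensor over $\Sets$, the fixed point functors $(-)^H$, and the chosen set of generating cofibrations. A left Bousfield localization $L_S\mathcal C$ has the very same underlying category as $\mathcal C$, the very same cofibrations, and is again cofibrantly generated with the same set of generating cofibrations (only the generating acyclic cofibrations, the weak equivalences, and the fibrations change). Since the objects $G/H \otimes A$ and the functors $(-)^H$ are built purely from colimits, limits, and the tensor with sets, they are literally identical as functors on $L_S\mathcal C$ and on $\mathcal C$. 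Consequently every condition should transfer verbatim.

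Concretely, I would check the three conditions in turn. Condition (\ref{cell1}) asserts that $(-)^H$ preserves filtered colimits, a property of the underlying category that is unaffected by the choice of model structure. Condition (\ref{cell2}) concerns pushouts along maps $G/K \otimes f$ with $f$ a generating cofibration; as both the generating cofibrations and the formation of pushouts are unchanged, this condition holds for $L_S\mathcal C$ exactly because it holds for $\mathcal C$. Condition (\ref{cell3}) is the purely categorical requirement that $(G/H)^K \otimes A \to (G/H \otimes A)^K$ be an isomorphism, which again involves only the tensor and the fixed point functors and so is inherited directly.

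With all three conditions verified for $L_S\mathcal C$, and $L_S\mathcal C$ cofibrantly generated, I would finish by invoking Theorem \ref{cellular} for $L_S\mathcal C$ to obtain the $G$-model structure. There is no substantive obstacle here; the only point requiring care is the bookkeeping of the first paragraph—confirming that the generating cofibrations survive localization and that the tensor, cotensor, and fixed point constructions genuinely coincide on the two categories—after which the result is essentially immediate.
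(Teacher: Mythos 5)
Your proposal is correct, and the paper in fact offers no proof of its own here: the result is quoted directly from Stephan, whose argument is exactly the one you give, namely that all three cellularity conditions refer only to the underlying category, the tensor over $\Sets$, the fixed point functors, and the generating cofibrations, none of which change under left Bousfield localization. The one bookkeeping point you rightly flag—that the localization is again cofibrantly generated with the same set of generating cofibrations—is supplied by Hirschhorn's existence theorem for localizations of left proper cellular model categories, which is the setting in which all the localizations in this paper (e.g.\ $\css$) are constructed.
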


\section{Equivariant complete Segal spaces and quasi-categories}

In this section, we consider the two models for $(\infty, 1)$-categories for which we get equivariant versions immediately from Stephan's results.  Using Theorem \ref{qe}, we can lift the Quillen equivalences between them to these equivariant versions.

We begin with the model of quasi-categories.  Recall that, given any $n \geq 1$ and $0 \leq k \leq n$, the $k$th horn of the $n$-simplex $\Delta[n]$, denoted by $V[n,k]$, is the boundary of $\Delta[n]$ with the $k$th face removed.

\begin{definition}
A \emph{quasi-category} is a simplicial set $K$ satisfying the inner horn filling conditions, i.e., that for any $n \geq 2$ and $0 <k<n$, in any diagram
\[ \xymatrix{V[n,k] \ar[r] \ar[d] & K \\
\Delta[n] \ar@{-->}[ur] &} \]
the dotted arrow lift exists.
\end{definition}

Recall the following result, proved using different methods by Joyal \cite{joyal}, Lurie \cite{lurie}, and Dugger and Spivak \cite{ds}.

\begin{theorem}
There is a model structure $\qcat$ on the category of simplicial sets in which the cofibrations are the monomorphisms and the fibrant objects are the quasi-categories.
\end{theorem}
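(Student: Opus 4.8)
The plan is to construct the model structure by appeal to a recognition theorem for combinatorial model categories, exploiting the fact that the prescribed cofibrations --- the monomorphisms --- are exactly the saturation of the set of boundary inclusions $\partial\Delta[n] \hookrightarrow \Delta[n]$. Thus a cofibrantly generated structure with these cofibrations is determined once a suitable class of weak equivalences is chosen, and the work is to make that choice and verify the hypotheses of (say) Jeff Smith's theorem.

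First I would fix the weak equivalences. The category of simplicial sets is cartesian closed, and the key combinatorial input, due to Joyal, is that the class of inner anodyne maps is stable under pushout-product with monomorphisms; it follows that whenever $Z$ is a quasi-category the internal hom $Z^X$ is again a quasi-category for every simplicial set $X$. Writing $\mathrm{ho}$ for the homotopy-category functor (the left adjoint to the nerve), I would then declare a map $f \colon X \to Y$ to be a \emph{weak categorical equivalence} if for every quasi-category $Z$ the induced functor $\mathrm{ho}(Z^Y) \to \mathrm{ho}(Z^X)$ is an equivalence of categories. This class is immediately seen to satisfy two-out-of-three and closure under retracts.

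Next I would verify the remaining hypotheses of the recognition theorem: that the weak categorical equivalences form an accessible, accessibly embedded subcategory of the arrow category (a routine presentability argument); that every map with the right lifting property against all monomorphisms is a weak categorical equivalence (such maps are simplicial deformation retractions, and applying $Z^{(-)}$ preserves this property); and, requiring more genuine work, that the trivial cofibrations, i.e.\ the monomorphisms which are weak categorical equivalences, are closed under pushout and transfinite composition. With these in hand the recognition theorem produces a combinatorial model structure whose cofibrations are the monomorphisms.

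It remains to identify the fibrant objects with the quasi-categories, and this is where I expect the main obstacle to lie. One inclusion is formal: the inner horn inclusions $V[n,k] \hookrightarrow \Delta[n]$ are monomorphisms and weak categorical equivalences, so any fibrant object admits all inner horn fillings and is a quasi-category. The converse --- that every quasi-category is fibrant --- is the delicate part. It requires first characterizing the fibrations between quasi-categories as the inner fibrations that additionally have the right lifting property against the inclusion $\{0\} \hookrightarrow J$ of an endpoint into the nerve $J$ of the walking isomorphism, and then showing that a quasi-category has the right lifting property against every trivial cofibration. This rests on a careful analysis of how inner anodyne maps interact with the interval $J$ and on lifting results for the ``special'' outer horns. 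An alternative to this whole route is to invoke Cisinski's general machinery for model structures on presheaf toposes, specifying the structure by a cylinder functor together with a generating set of anodyne maps; there the same combinatorial lemma on inner anodyne maps reappears as the crux.
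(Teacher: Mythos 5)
The paper does not actually prove this theorem; it is imported as a known result, with citations to three independent proofs (Joyal, Lurie, and Dugger--Spivak), so there is no internal argument to compare yours against step by step. That said, your outline is a faithful sketch of Joyal's original route, in its modern packaging via Smith's recognition theorem or Cisinski's theory of model structures on presheaf categories: your definition of weak categorical equivalence (apply $Z^{(-)}$ for $Z$ a quasi-category, then the homotopy-category functor, and ask for an equivalence of categories) is exactly Joyal's, and you correctly isolate the two load-bearing ingredients --- stability of inner anodyne maps under pushout-product with monomorphisms, and the identification of the fibrant objects with the quasi-categories via the interval $J$ and filling of ``special'' outer horns. Two caveats. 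First, what you have is a roadmap rather than a proof: the step you flag as delicate (every quasi-category is fibrant) is where essentially all of the length of the published accounts is concentrated, and that machinery is in fact needed \emph{earlier} than your outline suggests, since the standard proof that trivial cofibrations are closed under pushout and transfinite composition exponentiates into a quasi-category $Z$ and uses that an isofibration between quasi-categories which is an equivalence is a trivial fibration --- a fact that already rests on the $J$/special-horn theory. So the verification of Smith's hypotheses and the identification of fibrant objects are not really sequential, independent steps; the hard combinatorics underlies both. Second, for contrast with the other proofs the paper cites: Lurie defines categorical equivalences through the rigidification functor to simplicial categories and leans on the comparison with the model structure $\mathcal{SC}$, while Dugger and Spivak analyze mapping spaces directly via necklaces; your route is the self-contained combinatorial one, which is a perfectly good choice, but none of these details appear in, or can be checked against, the paper under review.
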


We can apply Theorem \ref{functors} and Theorem \ref{grouporbit} to obtain the following result.

\begin{theorem}
The $G$-model structure for $\qcat$ exists, and there is a Quillen equivalence of model categories
\[ i^\ast \colon \qcat^{\mathcal O_G^{op}} \rightleftarrows \qcat^G \colon i_\ast. \]
\end{theorem}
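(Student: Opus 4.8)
The plan is to recognize $\qcat$ as one of the functor categories covered by Theorem \ref{functors} and then feed the accompanying cellularity conditions into Theorem \ref{grouporbit}. First I would note that the underlying category of $\qcat$ is simplicial sets, which is exactly the functor category $\Sets^{\Deltaop}$, i.e.\ the functors $\Deltaop \to \Sets$. The cited model structure $\qcat$ is cofibrantly generated, and by construction its cofibrations are precisely the monomorphisms. Thus, taking $\mathcal D = \Deltaop$, every hypothesis of Theorem \ref{functors} is met, so that theorem immediately supplies the existence of the $G$-model structure on $\qcat^G$, which is the first assertion.

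For the second assertion I would observe that the conclusion of Theorem \ref{functors} is obtained by verifying the three cellularity conditions of Theorem \ref{cellular} for the fixed-point functors $(-)^H$. Concretely, colimits in a presheaf category are computed objectwise in $\Sets$, and for discrete $G$ the functor $(-)^H$ is a limit over $H$; one checks that such limits commute with the filtered colimits of condition (\ref{cell1}) and with pushouts along maps of the form $G/K \otimes f$ as in (\ref{cell2}), and that the comparison map of (\ref{cell3}) is an isomorphism because the $K$-fixed points of a coproduct $\coprod_{G/H} A$ reduce to $\coprod_{(G/H)^K} A$. Since these conditions hold, Theorem \ref{grouporbit} applies directly and yields the desired Quillen equivalence $i^\ast \colon \qcat^{\mathcal O_G^{op}} \rightleftarrows \qcat^G \colon i_\ast$.

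The one point to be careful about is the gap between the bare existence statement of Theorem \ref{functors} and the input that Theorem \ref{grouporbit} actually demands: the latter is phrased in terms of the cellularity conditions, not merely in terms of the existence of the $G$-model structure. These are not independent, however, since the proof of Theorem \ref{functors} proceeds exactly by establishing those conditions, so invoking Theorem \ref{grouporbit} is legitimate. The only model-structure-specific input is that $\qcat$ lives on a presheaf category of sets with monomorphisms as cofibrations; everything else is formal and identical to the verification underlying Theorem \ref{functors}, so no genuinely new computation is required, and I expect the whole argument to be short.
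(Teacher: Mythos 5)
Your proposal is correct and follows exactly the paper's route: the paper likewise observes that $\qcat$ is a cofibrantly generated model structure on the presheaf category $\Sets^{\Deltaop}$ with cofibrations the monomorphisms, and then applies Theorem \ref{functors} together with Theorem \ref{grouporbit}. Your additional remark that Theorem \ref{grouporbit} formally requires the cellularity conditions (which the proof of Theorem \ref{functors} establishes) rather than mere existence of the $G$-model structure is a fair point of care, but it does not change the argument.
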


Now we turn to the complete Segal space model.  The objects here are simplicial spaces, or functors $\Deltaop \rightarrow \SSets$.  Recall that the category of simplicial spaces can be equipped with the Reedy model structure, in which the weak equivalences are given levelwise \cite{reedy}.  In this case, it is equivalent to the injective model structure, in which the cofibrations are also defined levelwise.  Recall that the Reedy model structure is equipped with a compatible simplicial structure, so there is a mapping simplicial set $\Map(X,Y)$ between any simplicial spaces $X$ and $Y$.

\begin{definition}
A simplicial space $W$ is a \emph{Segal space} if it is Reedy fibrant and the Segal maps
\[ W_k \rightarrow \underbrace{W_1 \times_{W_0} \cdots \times_{W_0} W_1}_k \] are weak equivalences.
\end{definition}

Let $E$ denote the nerve of the groupoid with two objects and a single isomorphism between them.  Denote by $E^t$ its corresponding discrete simplicial space (the ``transpose" of the constant simplicial space at $E$).

\begin{definition}
A Segal space $W$ is \emph{complete} if the map $W_0 \rightarrow \Map(E^t, W)$ is a weak equivalence of simplicial sets.
\end{definition}

\begin{theorem}\cite[7.2]{rezk} \label{CSS}
There is a model structure $\css$ on the category of simplicial spaces, obtained as a left Bousfield localization of the Reedy model structure, which satisfies the
following properties:

\begin{enumerate}
\item the cofibrations are the monomorphisms, and

\item the fibrant objects are the complete Segal spaces.
\end{enumerate}
\end{theorem}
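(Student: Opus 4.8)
The plan is to realize $\css$ as a left Bousfield localization of the Reedy model structure on simplicial spaces and then to identify the fibrant objects of the localization with the complete Segal spaces. The one genuinely delicate point is this last identification; the existence of the localization itself is formal.

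First I would record that the Reedy model structure on $\SSets^{\Deltaop}$ meets the hypotheses of the general existence theorem for left Bousfield localizations: it is cofibrantly generated, it is left proper (indeed every object is cofibrant, since its cofibrations are the monomorphisms), it is cellular and combinatorial as a diagram category valued in simplicial sets, and it carries the compatible simplicial enrichment used to define $\Map(X,Y)$. Hence for any \emph{set} $S$ of maps there is a localized model structure $L_S$ whose cofibrations are unchanged and whose fibrant objects are the Reedy-fibrant objects that are $S$-local. Because the Reedy cofibrations here coincide with the monomorphisms and localization does not alter the cofibrations, claim (1) is immediate.

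Next I would choose $S$ to encode the two defining conditions. For the Segal condition, for each $k \geq 2$ let $G(k) \hookrightarrow \Delta[k]$ be the inclusion of the spine, the union of the edges $\{i-1,i\}$, viewed as a map of discrete simplicial spaces; for completeness I would include the collapse $E^t \rightarrow \Delta[0]$. The point of these choices is that, for a Reedy-fibrant $W$, the enriched mapping space $\Map(-,W)$ applied to them recovers exactly the maps appearing in the definitions. Indeed, since every object is cofibrant, $\Map(-,W)$ is a homotopy function complex; it sends the pushout presentation $G(k) = \Delta[1] \cup_{\Delta[0]} \cdots \cup_{\Delta[0]} \Delta[1]$ to the iterated pullback $W_1 \times_{W_0} \cdots \times_{W_0} W_1$, and Reedy fibrancy makes the structure maps $W_1 \rightarrow W_0$ fibrations so that this pullback is a homotopy pullback; thus $S$-locality against $G(k) \hookrightarrow \Delta[k]$ is exactly the statement that the $k$th Segal map $W_k \rightarrow W_1 \times_{W_0} \cdots \times_{W_0} W_1$ is a weak equivalence. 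Likewise $S$-locality against $E^t \rightarrow \Delta[0]$ is exactly the statement that $W_0 \rightarrow \Map(E^t,W)$ is a weak equivalence.

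Assembling these, a Reedy-fibrant object is $S$-local precisely when it is a Segal space that is complete, which is claim (2), so $L_S$ is the desired model structure $\css$. The main obstacle is the bookkeeping in the previous paragraph: one must be careful that $\Map(-,W)$ genuinely computes the derived mapping space on the chosen cofibrant domains, that the spine pushout is identified correctly with the iterated fiber product, and---most subtly---that locality against the single map $E^t \rightarrow \Delta[0]$ is equivalent to the completeness condition as stated rather than to some \emph{a priori} weaker or stronger condition. Once these identifications are verified, the theorem follows.
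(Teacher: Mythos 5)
Your proposal is correct and is essentially the argument behind the result as it appears in the literature: the paper itself gives no proof, citing \cite[7.2]{rezk}, and Rezk's proof is exactly your construction, localizing the Reedy (injective) model structure at the spine inclusions and at $E^t \rightarrow \Delta[0]$, with claim (1) following because localization preserves cofibrations and claim (2) from identifying the local Reedy-fibrant objects. One remark: the subtlety you flag at the end is vacuous for the statement as formulated in this paper, since here completeness is \emph{defined} as the condition that $W_0 \rightarrow \Map(E^t,W)$ be a weak equivalence, so $E^t$-locality is the completeness condition on the nose; the genuinely nontrivial identification (locality against $E^t \rightarrow \Delta[0]$ versus the condition that $W_0$ maps by a weak equivalence to the space of homotopy equivalences in $W_1$) is needed only to match Rezk's original definition of completeness, and is his Theorem 6.2.
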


We obtain the following result by applying Theorems \ref{functors}, \ref{localization}, and \ref{grouporbit}.

\begin{cor}
The model category $\css$ admits the $G$-model structure $\css^G$. There is a Quillen equivalences of model categories
 \[ i^\ast \colon \css^{\mathcal O_G^{op}} \rightleftarrows \css^G \colon i_\ast \]
\end{cor}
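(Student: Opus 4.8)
The plan is to chain together the three cited theorems, using the Reedy model structure on simplicial spaces as the input to Theorem \ref{functors}, propagating the relevant conditions through the localization defining $\css$ via Theorem \ref{localization}, and then invoking Theorem \ref{grouporbit} to obtain the orbit-diagram comparison.

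First I would recognize the underlying category as a functor category landing in $\Sets$. A simplicial space is a functor $\Deltaop \to \SSets$, and by the observation following Theorem \ref{functors} this is the same datum as a functor $\Deltaop \times \Deltaop \to \Sets$. The cofibrations of the Reedy model structure here are exactly the monomorphisms (this is the content of the stated agreement with the injective structure), so Theorem \ref{functors} applies. The point I would extract from its proof, however, is not merely that the $G$-model structure exists, but that for each subgroup $H$ the fixed point functor $(-)^H$ satisfies the cellularity conditions \eqref{cell1}--\eqref{cell3} of Theorem \ref{cellular}; this is what Stephan's argument for functor categories into $\Sets$ actually verifies, and it is the form of the hypothesis consumed by the remaining two theorems.

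Next I would feed this into Theorem \ref{localization}. By Theorem \ref{CSS}, the model structure $\css$ is a left Bousfield localization of the Reedy model structure. Since the Reedy structure satisfies the cellularity conditions, Theorem \ref{localization} immediately gives that $\css$ satisfies them as well, and in particular that the $G$-model structure $\css^G$ exists. This establishes the first assertion of the corollary. Then, because $\css$ satisfies the cellularity conditions, Theorem \ref{grouporbit} applies directly and yields the Quillen equivalence
\[ i^\ast \colon \css^{\mathcal O_G^{op}} \rightleftarrows \css^G \colon i_\ast, \]
which is the second assertion.

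The main obstacle is the bookkeeping step flagged above: one must be careful that the conclusion of Theorem \ref{functors}, phrased as ``admits the $G$-model structure,'' is in fact accompanied by the verification of the cellularity conditions, since it is those conditions---and not merely the bare existence of the $G$-model structure---that serve as the hypotheses of Theorems \ref{localization} and \ref{grouporbit}. Once this is pinned down, the rest is a formal application of the machinery assembled in Section 2, with no genuine computation required.
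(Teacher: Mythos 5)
Your proof is correct and follows exactly the route the paper takes: the paper's entire argument consists of citing Theorems \ref{functors}, \ref{localization}, and \ref{grouporbit} in precisely the order you chain them, using the Reedy structure on simplicial spaces (viewed as $\Sets$-valued presheaves with monomorphism cofibrations) as the input. Your additional remark that what propagates through the argument is the cellularity conditions themselves, not merely the existence of the $G$-model structure, is a careful and accurate reading of how the hypotheses of the three theorems fit together.
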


We now turn to the comparison between the model structures $\qcat$ and $\css$.  There are two different Quillen equivalences, both due to Joyal and Tierney.

\begin{theorem} \cite[4.11]{jt} \label{qcat1}
The functor $p^* \colon \css \rightarrow \qcat$, which associates to a complete Segal space $W$ the
simplicial set $W_{*,0}$, has a left adjoint $p_!$.  This adjoint pair defines a Quillen equivalence
\[ p^* \colon \css \leftrightarrows \qcat \colon p_!. \]
\end{theorem}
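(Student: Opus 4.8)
The plan is to follow the classical argument of Joyal and Tierney. First I would set up the adjunction abstractly: the functor $p^*$ is restriction along the functor $p \colon \Delta \to \Delta \times \Delta$, $[n] \mapsto ([n],[0])$, so that $(p^*W)_n = W_{n,0}$ recovers the first row $W_{*,0}$. Since both the category of simplicial sets and the category of simplicial spaces are presheaf categories, the left adjoint $p_!$ exists as the left Kan extension along $p$. Computing it on representables gives $p_!\Delta[n] = \Delta[n] \boxtimes \Delta[0]$, so that $p_!X$ is the simplicial space which is discrete in the space direction, with levelwise formula $(p_!X)_{m,n} = X_m$. This explicit description is what makes the remaining verifications tractable.

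Next I would check that $(p_!, p^*)$ is a Quillen pair by showing that $p_!$ is left Quillen. As recorded above, the cofibrations in both $\qcat$ and $\css$ are exactly the monomorphisms, and the levelwise formula $(p_!X)_{m,n} = X_m$ shows immediately that $p_!$ preserves monomorphisms, hence cofibrations. It then remains to see that $p_!$ sends acyclic cofibrations of $\qcat$ to weak equivalences of $\css$; using that $\css$ is a left Bousfield localization of the Reedy model structure, this reduces to checking that the images under $p_!$ of the generating trivial cofibrations of $\qcat$ (inner anodyne maps together with the completeness-type maps) are $\css$-local equivalences. The structural fact underlying these checks is that the first row $W_{*,0}$ of any complete Segal space is itself a quasi-category.

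Finally, and this is the heart of the matter, I would upgrade the Quillen pair to a Quillen equivalence via the criterion that $p^*$ reflects weak equivalences between fibrant objects and that the derived unit is a weak equivalence. For the first point, a map $W \to W'$ of complete Segal spaces is a $\css$-equivalence precisely when it is a levelwise (Reedy) equivalence, since these are maps between fibrant objects; one then shows, using the Segal condition together with the completeness condition $W_0 \simeq \Map(E^t, W)$, that this is detected on first rows, so that $W \to W'$ is a $\css$-equivalence if and only if $W_{*,0} \to W'_{*,0}$ is a categorical equivalence of quasi-categories. For the derived unit, given any simplicial set $X$ (all objects of $\qcat$ are cofibrant) one must identify the first row of a $\css$-fibrant replacement of the discrete simplicial space $p_!X$ and show it is Joyal-equivalent to $X$.

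The step I expect to be the main obstacle is precisely this last one: controlling the complete Segal space generated by $p_!X$ well enough to recover $X$, up to categorical equivalence, on its first row. The completeness condition is what rigidifies the space direction and makes this reconstruction possible, but verifying it is genuine work; in Joyal and Tierney's treatment it is handled not by a direct manipulation of fibrant replacements but by comparing with the total-space (diagonal) functor and invoking results relating homotopy colimits of simplicial spaces to the Kan--Quillen theory.
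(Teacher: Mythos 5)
The first thing to say is that the paper contains no proof of this statement: it is quoted, with citation, from Joyal and Tierney's \emph{Quasi-categories vs Segal spaces}, so your sketch can only be measured against their argument, not against anything internal to this paper. In outline you have reconstructed that argument faithfully: the adjunction is set up exactly as you describe ($p^*$ is restriction along $[n] \mapsto ([n],[0])$, and $p_!X$ is the levelwise discrete simplicial space with $(p_!X)_{m,n}=X_m$, i.e.\ $X^t$ in the paper's notation), the cofibration half of the Quillen-pair check is the same, and the equivalence is indeed obtained by showing that $p^*$ detects weak equivalences between complete Segal spaces and that the derived unit is an equivalence.

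There is, however, one step that fails as written, and one that is deferred rather than done. First, you propose to verify that $p_!$ preserves trivial cofibrations by checking it on ``the generating trivial cofibrations of $\qcat$ (inner anodyne maps together with the completeness-type maps).'' No explicit generating set of trivial cofibrations for the Joyal model structure is known: inner anodyne maps together with maps such as $\{0\} \rightarrow E$ detect fibrations only \emph{between fibrant objects}, not arbitrary fibrations, so although a generating set exists abstractly (the model structure is cofibrantly generated), there is no concrete set on which your check can actually be run. Joyal and Tierney circumvent exactly this by using the criterion (their Proposition 7.15, essentially Dugger's lemma) that an adjunction is a Quillen pair provided the left adjoint preserves cofibrations and the right adjoint takes fibrations between fibrant objects to fibrations; the task then becomes showing that $p^*$ carries Rezk fibrations between complete Segal spaces to fibrations of quasi-categories. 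Second, the step you correctly flag as the heart of the matter --- controlling a fibrant replacement of $p_!X$ well enough to recover $X$ on the first row --- is not handled in the source by an abstract comparison with the diagonal, but by producing an \emph{explicit} fibrant replacement: for a quasi-category $X$, the bisimplicial set $t^!X$ given by $(t^!X)_{m,n} = \Hom(\Delta[m] \times E(n), X)$, where $E(n)$ denotes the nerve of the groupoid obtained from $[n]$ by inverting all morphisms (for $n=1$ this is the object $E$ of the paper), is a complete Segal space whose first row is $X$ itself, and the canonical map $p_!X \rightarrow t^!X$ is a Rezk equivalence. In other words, the proof of this theorem is intertwined with that of the companion equivalence $(t_!,t^!)$ stated immediately after it in the paper; leaving that construction as a black box leaves the actual mathematical content of the theorem unproven.
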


The second Quillen equivalence between these two model categories
is given by a total simplicial set functor $t_! \colon \css \rightarrow \qcat$ and its right adjoint $t^!$.

\begin{theorem} \cite[4.12]{jt}
The adjoint pair
\[ t_! \colon  \css \rightleftarrows \qcat \colon t^! \]
is a Quillen equivalence.
\end{theorem}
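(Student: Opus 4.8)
The plan is to proceed in the two standard steps: first show that $(t_!, t^!)$ is a Quillen pair, and then upgrade this to a Quillen equivalence by comparing it with the equivalence of Theorem \ref{qcat1} rather than analyzing $t_!$ and $t^!$ from scratch. It helps to have the right adjoint in hand explicitly: for a simplicial set $K$ one has $(t^! K)_{m,n} = K_{m+n+1}$, so that $t^!$ is the total d\'ecalage of $K$ viewed as a simplicial space, while $t_!$ is the left Kan extension of the join functor $([m],[n]) \mapsto \Delta[m] \star \Delta[n]$ along the Yoneda embedding. Thus $t_!$ is a genuine colimit (realization) functor and $t^!$ is given by reindexing, which makes both steps amenable to explicit computation.

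For the first step, recall that the cofibrations in both $\css$ and $\qcat$ are precisely the monomorphisms, and that the trivial fibrations of $\qcat$ are exactly the maps with the right lifting property against all monomorphisms. Hence $t^!$ preserves trivial fibrations as soon as $t_!$ preserves monomorphisms, which follows from the explicit join description. It then remains to show that $t_!$ sends the generating trivial cofibrations of $\css$ to weak equivalences of $\qcat$; since $\css$ is the left Bousfield localization of the Reedy model structure, these are the Reedy trivial cofibrations together with the Segal and completeness maps inverted by the localization. The essential point, from which the Quillen pair property follows, is that $t^!$ carries every quasi-category to a complete Segal space: given a quasi-category $K$, one verifies that its total d\'ecalage $t^! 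K$ is Reedy fibrant, satisfies the Segal condition, and is complete. This is the step that carries the real combinatorial weight, with the completeness condition being the most delicate to check.

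For the second step, I would not verify the derived unit and counit by hand but instead use that both $t_!$ and the functor $p^*$ of Theorem \ref{qcat1} are functors $\css \to \qcat$, and that on fibrant objects they are naturally weakly equivalent: for a complete Segal space $W$ the total simplicial set $t_!(W)$ is Joyal-equivalent to the first row $W_{*,0} = p^*(W)$, via the canonical comparison map coming from the d\'ecalage. Consequently the total derived functor $\mathrm{L}t_!$ agrees with $\mathrm{R}p^*$ on homotopy categories up to natural isomorphism. Since Theorem \ref{qcat1} shows that $(p_!, p^*)$ is a Quillen equivalence, $\mathrm{R}p^*$ is an equivalence of homotopy categories, and therefore so is $\mathrm{L}t_!$; by the standard criterion this makes $(t_!, t^!)$ a Quillen equivalence. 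I expect the main obstacle to lie in the first step, namely proving that the total d\'ecalage of a quasi-category is complete, together with establishing the comparison equivalence $t_!(W) \simeq W_{*,0}$ used in the second step; both require a careful combinatorial analysis of the join and d\'ecalage constructions, whereas the formal deductions around the Quillen pair and the transfer of the equivalence are routine.
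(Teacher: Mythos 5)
This statement is not proved in the paper at all; it is quoted from Joyal and Tierney \cite[4.12]{jt}, so your argument has to stand on its own, and it fails at the very first step: you have misidentified the adjoint pair. In \cite{jt} the functors $t_!$ and $t^!$ are induced by the functor $t \colon \Delta \times \Delta \rightarrow \SSets$ given by $t([m],[n]) = \Delta[m] \times \Delta'[n]$, where $\Delta'[n]$ is the nerve of the groupoid freely generated by $[n]$; thus $t^!(K)_{m,n} = \Hom(\Delta[m] \times \Delta'[n], K)$ and $t_!$ is the left Kan extension of $t$ along the Yoneda embedding. This is not the ordinal-sum pair you describe ($t_!$ induced by $([m],[n]) \mapsto \Delta[m] \star \Delta[n]$, with $t^!$ the total d\'ecalage $(t^!K)_{m,n} = K_{m+1+n}$), and the difference is fatal rather than cosmetic: for the d\'ecalage pair the theorem is false, precisely because the claim you flag as carrying ``the real combinatorial weight'' --- that the total d\'ecalage of a quasi-category is a complete Segal space --- does not hold. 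Each level of $\mathrm{Dec}(K)$, namely the simplicial set $n \mapsto K_{m+1+n}$, admits an extra degeneracy and is therefore homotopy equivalent to the discrete set $K_m$; hence $\mathrm{Dec}(K)$ is levelwise weakly equivalent (via the canonical augmentation) to the discrete simplicial space $K^t$. Take $K = E$, the nerve of the free-standing isomorphism, which is a quasi-category. Then $E^t$ is a Reedy fibrant Segal space that is \emph{not} complete: $(E^t)_0$ has two points, while $\Map(E^t,E^t)$ is the discrete set of the four functors $E \rightarrow E$, so the completeness map cannot be a weak equivalence; and since completeness is invariant under levelwise equivalence among Reedy fibrant Segal spaces, $\mathrm{Dec}(E)$ is not a complete Segal space either. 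So with your identification $t^!$ does not preserve fibrant objects, and the pair is not even a Quillen adjunction from $\css$ to $\qcat$, let alone a Quillen equivalence. (One can also see the failure directly: levelwise homotopy discreteness would force the Segal map $K_2 \rightarrow K_1 \times_{K_0} K_1$ to be essentially a bijection, which fails for a general quasi-category.)

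Your second step, by contrast, has the right architecture, and with the correct functors it is close to how Joyal and Tierney actually argue --- indeed it becomes cleaner than what you propose. Since $\Delta'[0] = \Delta[0]$, the first row of $t^!K$ is exactly $K$; that is, $p^* \circ t^! = \id$ on the nose, where $p^*$ is the right Quillen functor of Theorem \ref{qcat1}. So once one proves the genuinely hard statements --- that $t^!$ carries a quasi-category to a complete Segal space (this is where the groupoid interval $\Delta'[n]$ is essential: the column $n \mapsto \Hom(\Delta[m] \times \Delta'[n],K)$ computes spaces of equivalences, which is exactly what makes completeness come out right) and that the resulting Quillen pair for the Reedy structure descends through the Bousfield localization defining $\css$ --- the equivalence follows from Theorem \ref{qcat1} by deriving the identity $p^* t^! = \id$, in the spirit of your comparison argument but with no comparison map to construct. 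Two further cautions: a left Bousfield localization such as $\css$ does not have generating trivial cofibrations given by ``the Reedy ones together with the localizing maps,'' so the Quillen-pair verification should be phrased via preservation of fibrant objects by the right adjoint rather than via generators; and the name ``total simplicial set functor'' notwithstanding, $t_!$ is not the Artin--Mazur totalization adjoint to d\'ecalage, which is presumably the source of the confusion.
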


\begin{cor}
There are two commuting squares of Quillen equivalences:
\[ \xymatrix{\qcat^G \ar@<.5ex>[r] \ar@<.5ex>[d] & \css^G \ar@<.5ex>[l] \ar@<.5ex>[d] & \qcat^G \ar@<-.5ex>[r] \ar@<.5ex>[d] & \css^G \ar@<-.5ex>[l] \ar@<.5ex>[d] \\
\qcat^{\orbit} \ar@<.5ex>[u] \ar@<.5ex>[r] & \css^{\orbit} \ar@<.5ex>[l] \ar@<.5ex>[u] & \qcat^{\orbit} \ar@<.5ex>[u] \ar@<-.5ex>[r] & \css^{\orbit}. \ar@<-.5ex>[l] \ar@<.5ex>[u]} \]
\end{cor}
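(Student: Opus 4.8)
The plan is to obtain each of the two squares by instantiating the general commutative square of Quillen equivalences assembled immediately before Theorem \ref{functors}, once for each of the two Joyal--Tierney equivalences, with $\mathcal C = \qcat$ and $\mathcal D = \css$.

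First I would record that both $\qcat$ and $\css$ satisfy the cellularity conditions of Theorem \ref{cellular}. For $\qcat$ this is built into the result establishing $\qcat^G$, which was obtained from Theorem \ref{functors} (a model structure on a functor category into $\SSets$ whose cofibrations are monomorphisms) together with Theorem \ref{grouporbit}. For $\css$ it follows from Theorems \ref{functors}, \ref{localization}, and \ref{grouporbit}, since $\css$ is a left Bousfield localization of the Reedy model structure on simplicial spaces, itself a functor category into $\SSets$. Having verified the hypotheses, Theorem \ref{grouporbit} supplies the two vertical Quillen equivalences
\[ i^\ast \colon \qcat^{\orbit} \rightleftarrows \qcat^G \colon i_\ast, \qquad i^\ast \colon \css^{\orbit} \rightleftarrows \css^G \colon i_\ast, \]
which are common to both squares.

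Next I would apply Theorem \ref{qe} to each of the two Joyal--Tierney adjunctions, namely $(p_!, p^*)$ of Theorem \ref{qcat1} and $(t_!, t^!)$ of the theorem following it. Since $\qcat$ and $\css$ both satisfy the cellularity conditions, Theorem \ref{qe} produces, for each adjunction, an induced Quillen equivalence on the level of $G$-objects, $\qcat^G \rightleftarrows \css^G$; these are the top rows of the two squares. The commutativity of each square---that is, the compatibility of the induced functors $F^G, R^G$ with the orbit functors $i^\ast, i_\ast$---is exactly the commutative square assembled before Theorem \ref{functors} from the combination of Theorem \ref{qe} and Theorem \ref{grouporbit}, so no new argument is required beyond invoking that construction with the present choice of $\mathcal C$ and $\mathcal D$.

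The main obstacle is less a matter of a hard argument than of bookkeeping: one must check that each square genuinely commutes, i.e.\ that the left Kan extension $i_\ast$ and the fixed-point functor $i^\ast$ are suitably natural in the underlying Quillen pair, so that first passing to orbit diagrams and then applying the induced functors agrees up to natural isomorphism with first applying the functors and then passing to orbit diagrams. This naturality is inherited from the functoriality of $(-)^H$ and of left Kan extension along $i \colon G \rightarrow \mathcal O_G^{op}$, and is precisely what the general square encodes; once it is cited, nothing specific to $\qcat$ or $\css$ remains to be verified, and the two squares follow at once.
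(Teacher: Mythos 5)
Your proposal is correct and follows essentially the same route as the paper: the corollary is obtained by instantiating the general commutative square (assembled from Theorems \ref{qe} and \ref{grouporbit}) with $\mathcal C = \qcat$ and $\mathcal D = \css$, once for each of the two Joyal--Tierney Quillen equivalences, using the cellularity of $\qcat$ and $\css$ already established via Theorems \ref{functors} and \ref{localization}. The paper treats this as immediate and gives no further argument, so your additional remarks on naturality are simply making explicit what the cited general square already encodes.
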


\section{Equivariant simplicial categories}

In this section we consider simplicial categories.  Recall that a simplicial category is a category enriched in simplicial sets, so that between any two objects $x$ and $y$, there is a simplicial set $\Map(x,y)$, together with compatible composition.  In this case, the $G$-equivariant model structure does not follow immediately from previous results.

We start by recalling some notation.  Given any simplicial category $\mathcal C$, there is an associated \emph{category of components} $\pi_0 \mathcal C$ whose objects are the same as those of $\mathcal C$ and whose morphisms are the sets of components of the mapping spaces of $\mathcal C$.

Define the functor $U: \SSets \rightarrow \mathcal{SC}$ such that for any simplicial set $K$, the simplicial category $UK$ has two objects, $x$ and $y$, and
only nonidentity morphisms the simplicial set $K = \Hom(x,y)$.

\begin{theorem} \cite[1.1]{simpcat}
There is a cofibrantly generated model structure $\mathcal{SC}$ on the category
of small simplicial categories with the following properties.

\begin{enumerate}
\item The weak equivalences are the simplicial functors $f \colon \mathcal C
\rightarrow \mathcal D$ satisfying the following two conditions:
\begin{itemize}
\item for any objects $x$ and $y$ in $\mathcal C$, the map
\[ \Map_\mathcal C (x,y) \rightarrow \Map_\mathcal D (fx,fy) \]
is a weak equivalence of simplicial sets, and

\item the induced functor $\pi_0f \colon \pi_0 \mathcal C
\rightarrow \pi_0 \mathcal D$ on the categories of components is
an equivalence of categories.
\end{itemize}

\item A set of generating cofibrations for $\mathcal{SC}$ contains the functors
\begin{itemize}
\item $U \partial \Delta [n] \rightarrow U \Delta [n]$ for $n
\geq 0$, and

\item $\varnothing \rightarrow \{x\}$, where $\varnothing$ is the
simplicial category with no objects and $\{x\}$ denotes the
simplicial category with one object $x$ and no nonidentity
morphisms.
\end{itemize}
\end{enumerate}
\end{theorem}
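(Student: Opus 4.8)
The plan is to apply Kan's recognition theorem for cofibrantly generated model categories: given a complete and cocomplete category, a class $W$ of weak equivalences that is closed under retracts and satisfies two-out-of-three, and sets $I$, $J$ of maps with small domains, one obtains a model structure provided that $(I\text{-inj}) = W \cap (J\text{-inj})$ and $J\text{-cell} \subseteq W \cap (I\text{-cof})$. First I would record that $\mathcal{SC}$ is complete and cocomplete, and that the class $W$ of Dwyer--Kan equivalences described in the statement is closed under retracts and satisfies two-out-of-three; both reduce to the corresponding facts for weak equivalences of simplicial sets and for equivalences of categories, using that $\pi_0$ is a functor.

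Next I would take $I$ to be the two families in the statement and build a set $J$ of generating acyclic cofibrations in two parts. The first part consists of the maps $U V[n,k] \to U\Delta[n]$ for $n \geq 1$ and $0 \leq k \leq n$; imposing the right lifting property against these forces each mapping space of the target to be a Kan complex and the map to restrict to a fibration on it. The second part is a map $\{x\} \to \mathcal H$, where $\mathcal H$ is the simplicial category on two objects that are inverse homotopy equivalences, chosen so that this inclusion is itself a Dwyer--Kan equivalence; the right lifting property against it encodes the lifting of homotopy equivalences, hence essential surjectivity of $\pi_0 F$. Each map in $J$ is visibly an $I$-cofibration and, because $V[n,k] \to \Delta[n]$ is a weak equivalence and $\{x\} \to \mathcal H$ is a Dwyer--Kan equivalence, also lies in $W$. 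A direct inspection then shows that a map is $I$-injective exactly when it is surjective on objects and restricts to an acyclic fibration on every mapping space, and that any such map is a Dwyer--Kan equivalence; combined with $J \subseteq I\text{-cof}$ this gives the inclusion $(I\text{-inj}) \subseteq W \cap (J\text{-inj})$.

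The remaining content, and what I expect to be the main obstacle, is the verification that $J\text{-cell} \subseteq W$, together with the reverse inclusion $W \cap (J\text{-inj}) \subseteq (I\text{-inj})$. The difficulty is that colimits in $\mathcal{SC}$ do not compute mapping spaces naively: attaching a cell via $U$ freely adjoins simplices and morphisms, so the mapping spaces of a pushout are governed by formal composites and zig-zags of the attached data rather than by a pushout of the mapping spaces themselves. The crux is therefore a careful combinatorial analysis of these free constructions, showing that a pushout along $U V[n,k] \to U\Delta[n]$ alters each mapping space only up to weak equivalence, and that a pushout along $\{x\} \to \mathcal H$ freely adjoins an object equipped with an equivalence to an existing one without changing the homotopy types of mapping spaces or the equivalence type of $\pi_0$; transfinite compositions are then controlled by the filtered-colimit behavior of these invariants. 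The reverse inclusion follows by a lifting argument: a Dwyer--Kan equivalence that is a $J$-injective fibration is surjective on objects and a local acyclic fibration, hence $I$-injective. With both inclusions established, the recognition theorem produces the cofibrantly generated model structure $\mathcal{SC}$ with weak equivalences $W$, generating cofibrations $I$, and fibrant objects the locally Kan simplicial categories.
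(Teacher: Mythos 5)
Your overall strategy---the recognition theorem for cofibrantly generated model categories, with $I$ the two families in the statement and $J$ consisting of the maps $U V[n,k]\to U\Delta[n]$ together with inclusions of $\{x\}$ into interval-like simplicial categories---is exactly the strategy of the proof this paper is citing (the result is quoted from \cite{simpcat}, which is where the proof lives). The genuine gap is in your choice of $J$: you take a \emph{single} map $\{x\}\to\mathcal H$ with $\mathcal H$ ``the simplicial category on two objects that are inverse homotopy equivalences,'' and no single such $\mathcal H$ can do the job, because the two properties you need pull in opposite directions. If $\mathcal H$ is the free simplicial category on $f\colon x\to y$, $g\colon y\to x$ and $1$-simplices $H\colon gf\to 1_x$, $K\colon fg\to 1_y$, then its mapping spaces are \emph{not} weakly contractible (the two edges $fH$ and $Kf$ from $fgf$ to $f$ are not homotopic---this is the triangle-identity obstruction, detected for instance by counting $H$- versus $K$-letters, a quantity additive over $2$-simplices), so $\{x\}\to\mathcal H$ is not a Dwyer--Kan equivalence and $J\text{-cell}\subseteq W$ fails. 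If instead you take a homotopy-coherent walking equivalence, so that the mapping spaces are weakly contractible (necessarily an infinite object), then $J\text{-cell}\subseteq W$ can be arranged, but the step you yourself flag as essential, $W\cap(J\text{-inj})\subseteq(I\text{-inj})$, breaks: to show that a Dwyer--Kan equivalence with the RLP against $J$ is surjective on objects, you must extend an arbitrary homotopy equivalence $e\colon fx\to y$ in an \emph{arbitrary} simplicial category $\mathcal D$ to a simplicial functor $\mathcal H\to\mathcal D$ hitting both objects. When the mapping spaces of $\mathcal D$ are not Kan complexes, the infinite tower of coherence data that a map out of the fixed coherent $\mathcal H$ demands cannot be produced---there is no horn-filling available in $\mathcal D$---so your appeal to ``lifting of homotopy equivalences'' silently assumes fibrant targets, which is exactly what cannot be assumed at this point of the recognition argument.

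This is precisely why the cited proof uses a \emph{set} of intervals rather than one: in \cite{simpcat} the generating acyclic cofibrations are the horn maps together with all inclusions $\{x\}\to\mathcal H$, where $\mathcal H$ ranges over representatives of isomorphism classes of simplicial categories with two objects, weakly contractible mapping spaces, and only countably many simplices (with suitable cofibrancy). The countability is a cardinality bound keeping the collection a set while allowing $\mathcal H$ to be chosen \emph{adapted to the given} $\mathcal D$ and $e$; a dedicated lemma there produces, from any homotopy equivalence in any simplicial category, a member of this set mapping in appropriately, and that is what rescues both $W\cap(J\text{-inj})\subseteq(I\text{-inj})$ and the identification of the fibrations. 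Two smaller points: the RLP against the maps $U V[n,k]\to U\Delta[n]$ makes a functor a local Kan fibration---it is only the fibrant objects whose mapping spaces are forced to be Kan, not every target as you state---and the combinatorial analysis of pushouts along $U V[n,k]\to U\Delta[n]$ and along the interval inclusions, which you correctly identify as the other crux, is left entirely unexecuted in your proposal, whereas it occupies the technical heart of the reference.
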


To obtain a model structure for $G$-simplicial categories, we need to verify that the cellularity conditions hold.

\begin{theorem}
The model category $\mathcal{SC}$ satisfies the cellularity conditions of Theorem \ref{cellular}.
\end{theorem}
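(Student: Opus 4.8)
The plan is to make the fixed-point functor $(-)^H \colon \mathcal{SC}^G \to \mathcal{SC}$ completely explicit and then to verify the three cellularity conditions by reducing, as far as possible, to the corresponding statements for $\Sets$ and $\SSets$, where they are already known from Theorem \ref{functors}. Given a $G$-simplicial category $\mathcal C$, the limit defining $\mathcal C^H$ is computed as follows: its objects are the $H$-fixed objects of $\mathcal C$, that is, those $x \in \ob \mathcal C$ with $h \cdot x = x$ for all $h \in H$, and for two such objects $x, y$ the mapping space is $\Map_{\mathcal C^H}(x,y) = (\Map_\mathcal{C}(x,y))^H$, the $H$-fixed simplicial subset. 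Composition restricts to fixed points because it is $G$-equivariant, so this is a well-defined simplicial category. Thus $(-)^H$ decomposes into an ``object part'' valued in $\Sets$ and a ``morphism part'' valued in $\SSets$, and the guiding observation is that the colimits appearing in the three conditions are computed on these underlying pieces, where the behavior of fixed points is understood.

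I would dispatch condition (\ref{cell3}) first, by direct computation. Since $\mathcal C$ is complete and cocomplete, the tensor $G/H \otimes A$ is the coproduct $\coprod_{G/H} A$ of copies of $A$ indexed by $G/H$, with $G$ permuting the summands via its left action, and coproducts in $\mathcal{SC}$ are disjoint unions on objects and mapping spaces. For a subgroup $K$, an element $k \in K$ carries the summand indexed by $gH$ isomorphically to the one indexed by $kgH$ by the identity of $A$, so a summand is $K$-fixed exactly when $gH \in (G/H)^K$, and on such a summand $K$ acts trivially. Hence $(G/H \otimes A)^K = \coprod_{(G/H)^K} A = (G/H)^K \otimes A$, and the comparison map is precisely this identification. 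For condition (\ref{cell1}), I would use that the forgetful functor from $\mathcal{SC}$ to simplicial graphs (a set of objects together with a mapping space for each ordered pair) preserves filtered colimits, so filtered colimits in $\mathcal{SC}$ are computed on objects in $\Sets$ and on mapping spaces in $\SSets$, with composites assembled afterward. Preservation of filtered colimits by $(-)^H$ then follows from the corresponding facts for $\Sets$ and $\SSets$ underlying Theorem \ref{functors}, the compatibility with composition being automatic.

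The real work is condition (\ref{cell2}), which I expect to be the main obstacle. For the generator $\varnothing \to \{x\}$, the tensored map $G/K \otimes (\varnothing \to \{x\})$ is $\varnothing \to G/K \otimes \{x\}$, where $G/K \otimes \{x\}$ is the discrete simplicial category on the $G$-set $G/K$; pushing out along it simply adjoins a free orbit of objects disjointly and creates no new morphisms, so $(-)^H$ preserves this pushout by the disjoint-union computation used for condition (\ref{cell3}). The genuinely delicate case is the generator $U\partial\Delta[n] \to U\Delta[n]$, which leaves the object set unchanged but attaches cells to mapping spaces: here the pushout $\mathcal D$ of $\mathcal C \leftarrow G/K \otimes U\partial\Delta[n] \to G/K \otimes U\Delta[n]$ has mapping spaces built not only from the attached cells but from all free composites alternating between morphisms already present in $\mathcal C$ and the newly adjoined ones.

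I would write each mapping space $\Map_{\mathcal D}(a,b)$ as a colimit indexed by the set of such composable words, which carries a natural $G$-action, and then verify that $(-)^H$ commutes with this colimit. The subtle point, and the crux of the whole theorem, is that an $H$-fixed composite need not be a composite of $H$-fixed morphisms: an element of $H$ may permute the factors of a word while fixing the total composite, so one cannot naively interchange fixed points with the word decomposition. Instead one must analyze the $G$-action on the indexing set of words directly and reduce the claim to preservation of the relevant pushouts of $G$-simplicial sets, which is again governed by Theorem \ref{functors}. Once this colimit description is shown to be compatible with $(-)^H$, the three conditions together establish that $\mathcal{SC}$ satisfies the cellularity conditions of Theorem \ref{cellular}.
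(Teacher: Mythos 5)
Your handling of conditions (\ref{cell1}) and (\ref{cell3}) is sound. For (\ref{cell3}) your computation is essentially the paper's argument verbatim. For (\ref{cell1}) you take a different route: you compute filtered colimits of simplicial categories on underlying simplicial graphs (valid, since the free-category monad is finitary, so the forgetful functor creates filtered colimits), whereas the paper instead passes through the simplicial nerve, using that it commutes with filtered colimits, commutes with fixed points as a right adjoint, and is fully faithful, so that the desired isomorphism can be detected after applying it. Both arguments work, and yours is arguably the more elementary. Your treatment of the generator $\varnothing \to \{x\}$ in condition (\ref{cell2}) also matches the paper's.

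The gap is in the only genuinely hard case, the generator $U\partial\Delta[n] \to U\Delta[n]$, and it sits exactly at the point you yourself call ``the crux of the whole theorem.'' You correctly describe $\Map_{\mathcal D}(a,b)$ as built out of alternating words of old morphisms and newly attached cells, and you correctly note that one cannot blindly commute $(-)^H$ with this construction; but you then defer the resolution to an unspecified ``analysis of the $G$-action on the indexing set of words'' to be ``governed by Theorem \ref{functors}.'' That deferral is not a proof, and the proposed tool is the wrong one: Theorem \ref{functors} applies to presheaf categories, where the relevant colimits are computed levelwise, whereas the mapping spaces of a pushout of simplicial categories are quotients of word spaces by composition/boundary identifications, which are not levelwise pushouts in any presheaf category. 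Your diagnosis of the difficulty is also slightly off: the $H$-action never permutes the positions of factors within a word (it acts factor-wise, translating the $G/K$-indices and acting on the $\mathcal C$-factors); the real issue is that a simplex of $\Map_{\mathcal D}(a,b)$ has many word representatives, so fixedness of a point does not obviously yield a fixed representative. What closes the gap --- and what the paper's admittedly terse argument is gesturing at --- is a normal-form statement: in each simplicial degree every simplex of $\Map_{\mathcal D}(a,b)$ has a unique reduced representative (no cell factor lying in $\partial\Delta[n]$, adjacent $\mathcal C$-factors composed), the $H$-action preserves reduced words, and a reduced word is $H$-fixed if and only if each $\mathcal C$-factor is $H$-fixed and each index lies in $(G/K)^H$. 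This identifies $\mathcal D^H$, degree by degree, with the pushout of $\mathcal C^H$ along $(G/K)^H \otimes (U\partial\Delta[n] \to U\Delta[n])$. Without this uniqueness statement (or an equivalent), your outline does not establish condition (\ref{cell2}).
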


\begin{proof}
To establish condition \eqref{cell1}, we modify the argument used for categories in \cite{bmoopy}.  Let $I$ be a filtered category and $F \colon I \rightarrow \mathcal{SC}^G$ a functor.  Consider the simplicial nerve functor $N \colon \mathcal{SC} \rightarrow \SSets^{\Deltaop}$.  Thinking of a simplicial category as a special case of a simplicial object in $\mathcal Cat$, the simplicial nerve is given by levelwise nerve of categories.  Therefore, the functor $N$ commutes with filtered colimits \cite{nerve}.  Thus, we get an isomorphism
\[ N \colim_I(F(i)^H) \cong \colim_I(N(F(i))^H). \]
Since the fixed point functor $(-)^H$ is defined as a limit and the simplicial nerve is a right adjoint functor, we get an isomorphism
\[ N(F(i)^H) \cong (NF(i))^H. \]
Since finite limits and filtered colimits commute in $\Sets$ and are computed levelwise in $\SSets^{\Deltaop}$, we obtain isomorphisms
\[ \colim_I((NF(i))^H) \cong (\colim_I NF(I))^H \cong (N\colim_IF(i))^H \]
where the second isomorphism is a second application of commuting the filtered colimit with the nerve.  We again use that $N$ is a right adjoint, so that
\[ (N\colim_IF(i))^H \cong N(\colim_I F(i))^H. \]
Since the nerve functor is fully faithful, this isomorphism holds even before applying the nerve functor, which completes the proof of condition \eqref{cell1}.

Next we consider condition \eqref{cell3}.  The tensor product is given by the disjoint union of categories, and we want to show that the map
\[ \coprod_{(G/K)^H} A \rightarrow \left(\coprod_{G/K} A \right)^H \]
is an isomorphism.  The action of $H$ on $\coprod_{G/K} A$ is given by permuting the copies of $A$, since each $A$ itself has trivial $G$-action.  Therefore, this action is determined precisely by the action of $H$ on $G/K$.  The desired isomorphism follows.

Finally, we establish condition \eqref{cell2}.  Given any pushout diagram of the form
\[ \xymatrix{\coprod_{G/K} \mathcal A \ar[r] \ar[d] & \mathcal C \ar[d] \\
\coprod_{G/K} \mathcal B \ar[r] & \mathcal D} \]
with $\mathcal A \rightarrow \mathcal B$ a generating cofibration in $\mathcal{SC}$, we want to show, making use of condition \eqref{cell3}, that the diagram
\[ \xymatrix{\coprod_{(G/K)^H} \mathcal A \ar[r] \ar[d] & \mathcal C^H \ar[d] \\
\coprod_{(G/K)^H} \mathcal B \ar[r] & \mathcal D^H} \]
is again a pushout square.

We first consider the case where $\mathcal A$ is the initial simplicial category $\varnothing$ and $\mathcal B$ is a terminal simplicial category $\{x\}$.  Then in the original pushout square, the simplicial category $\mathcal D$ is obtained from the simplicial category $\mathcal C$ simply by adjoining disjoint objects indexed by the cosets $G/K$.  When we apply the fixed point functor $(-)^H$, we only adjoin those objects indexed by $(G/K)^H$, which establishes the desired pushout.

It remains to consider the case where $\mathcal A \rightarrow \mathcal B$ is of the form $U\partial \Delta[n] \rightarrow U \Delta [n]$ for any $n \geq 0$.  In this case, $\mathcal D$ is obtained from $\mathcal C$ by attaching, for each coset $G/K$, an $n$-simplex of morphisms to the appropriate mapping space of $\mathcal C$, then freely adjoining all necessary composites with other mapping spaces.  Since this gluing is done in an equivariant manner, the adjoined simplices have $H$-action as specified by the $H$-action on $(G/K)^H$.  Since the composites are also included in a manner compatible with the $G$-action, again the only fixed points of $\mathcal D$ by the action of $H$ can be those of $\mathcal C$ or those given by simplices indexed by $(G/K)^H$, as desired.
\end{proof}

\begin{cor}
The model structure $\mathcal{SC}^G$ exists and there is a Quillen equivalence
\[ p^* \colon \mathcal{SC}^{\orbit} \rightleftarrows \mathcal{SC}^G \colon i_*. \]
\end{cor}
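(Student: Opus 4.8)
The plan is to read off both claims from the general machinery of Section 2, since all of the genuine work has already been carried out in the preceding theorem. That theorem shows that the cofibrantly generated model category $\mathcal{SC}$ satisfies the three cellularity conditions of Theorem \ref{cellular}; this is the only nonformal input required.

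Granting this, the existence of $\mathcal{SC}^G$ is immediate from Theorem \ref{cellular}: its hypotheses ask precisely that the ambient category be cofibrantly generated and that the fixed point functors satisfy conditions \eqref{cell1}, \eqref{cell2}, and \eqref{cell3}, all of which now hold for $\mathcal{SC}$. The conclusion is that $\mathcal{SC}^G$ admits the $G$-model structure.

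For the asserted Quillen equivalence, I would apply Theorem \ref{grouporbit} with $\mathcal C = \mathcal{SC}$. The hypotheses of that theorem are again exactly that $\mathcal{SC}$ be cofibrantly generated and satisfy the cellularity conditions, so its conclusion supplies the desired adjunction between $\mathcal{SC}^{\orbit}$ and $\mathcal{SC}^G$, with adjoints the restriction along $i \colon G \rightarrow \mathcal O_G^{op}$ and its left Kan extension. The only point meriting a moment's care is that the functor $i$ is defined, which holds because we work with the full $G$-model structure rather than an $\mathcal F$-model structure for a proper family.

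Accordingly, I expect no real obstacle at this stage: the corollary is a formal consequence of the cellularity verification together with Theorems \ref{cellular} and \ref{grouporbit}, and the entire difficulty of the section is concentrated in that verification.
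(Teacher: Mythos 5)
Your proposal is correct and matches the paper's (implicit) argument exactly: the corollary is stated without proof precisely because it follows formally from the preceding cellularity theorem combined with Theorems \ref{cellular} and \ref{grouporbit}. Your added remark about the functor $i$ being defined (since we use the full $G$-model structure rather than an $\mathcal F$-model structure) is a fine point the paper relegates to a remark in Section 2.
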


We can now relate these model structures to the other models for equivariant $(\infty, 1)$-categories from the previous section.  We use a direct Quillen equivalence between $\mathcal{SC}$ and $\qcat$, defined using the coherent nerve functor $\widetilde{N} \colon \mathcal{SC} \rightarrow \qcat$, originally due to Cordier and Porter \cite{cp}. Given a simplicial category $X$ and the simplicial resolution $C_*[n]$ of the category $[n]=(0 \rightarrow \cdots \rightarrow n)$, the coherent nerve $\widetilde{N}(X)$ is defined by
\[ \widetilde{N}(X)_n = \Hom_{\mathcal SC}(C_*[n],X). \]
This functor has a left adjoint $J \colon \qcat \rightarrow \mathcal{SC}$.

\begin{theorem} \cite{ds}, \cite{joyal3}, \cite{lurie}
The adjoint pair
\[ \xymatrix@1{J \colon \qcat \ar@<.5ex>[r] & \mathcal{SC} \colon \widetilde{N} \ar@<.5ex>[l]}
\] is a Quillen equivalence.
\end{theorem}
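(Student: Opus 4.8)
The plan is to verify the two conditions that promote a Quillen adjunction to a Quillen equivalence: that $\widetilde{N}$ is a right Quillen functor reflecting weak equivalences between fibrant objects, and that for every cofibrant object $X$ of $\qcat$ the derived unit $X \to \widetilde{N}(\widehat{JX})$ is a weak equivalence, where $\widehat{JX}$ denotes a fibrant replacement of $JX$ in $\mathcal{SC}$. Since the cofibrations of $\qcat$ are all monomorphisms, every simplicial set is cofibrant, so the unit condition must be checked on all of $\qcat$.

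First I would show that $(J, \widetilde{N})$ is a Quillen adjunction. As $J$ is a left adjoint it preserves colimits, so to see that it preserves cofibrations it suffices to check this on the generating cofibrations $\partial\Delta[n] \to \Delta[n]$; the maps $J(\partial\Delta[n]) \to J(\Delta[n])$ are cofibrations of simplicial categories, using the explicit identification $J(\Delta[n]) \cong C_*[n]$. Rather than check trivial cofibrations directly, I would dualize and verify that $\widetilde{N}$ sends fibrant simplicial categories (those whose mapping spaces are Kan complexes) to quasi-categories: an inner horn $V[n,k] \to \widetilde{N}(X)$ transposes to a lifting problem phrased in the mapping spaces of $C_*[n]$, which is solvable because those mapping spaces are Kan.

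Next I would analyze the derived unit. The essential input is the identification $J(\Delta[n]) \cong C_*[n]$ together with the fact that the mapping spaces of $C_*[n]$ are nerves of posets of chains, each having a maximal element and hence contractible nerve; thus $C_*[n]$ is a resolution of the poset $[n]$, and a direct computation shows the unit $\Delta[n] \to \widetilde{N}(\widehat{C_*[n]})$ is a weak equivalence. To extend this from representables to an arbitrary simplicial set $X$, I would induct over skeleta: $X$ is the filtered colimit of its skeleta $\sk_n X$, each obtained from the previous by a pushout attaching $n$-simplices along $\partial\Delta[n] \to \Delta[n]$. Because $J$ preserves colimits and cofibrations, it carries these attachments to homotopy pushouts and the whole tower to a homotopy colimit in $\mathcal{SC}$, which lets the equivalence for representables propagate to $X$.

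Finally I would check that $\widetilde{N}$ reflects weak equivalences between fibrant objects: for a map $f$ of fibrant simplicial categories with $\widetilde{N}(f)$ a weak equivalence in $\qcat$, one recovers the two defining conditions for $f$ to be a weak equivalence in $\mathcal{SC}$, namely that $f$ induces weak equivalences on all mapping spaces and an equivalence on categories of components, by comparing the mapping spaces of $X$ with the corresponding mapping spaces computed in the quasi-category $\widetilde{N}(X)$. The main obstacle is the derived-unit step: controlling the homotopy type of the cofibrant simplicial categories $C_*[n]$ and, above all, showing that $J$ turns skeletal attachments into homotopy pushouts so that the equivalence verified on representables carries over to every simplicial set. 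This combinatorial analysis of the coherent nerve is the technical heart of the argument.
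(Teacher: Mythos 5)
First, a point of comparison: the paper offers no proof of this statement at all. It is quoted as a known theorem, with the proof deferred to Dugger--Spivak \cite{ds}, Joyal \cite{joyal3}, and Lurie \cite{lurie}. So your proposal has to stand on its own, and while its skeleton is the standard one (establish the Quillen adjunction, check the derived unit on all objects since everything in $\qcat$ is cofibrant, and show $\widetilde{N}$ reflects weak equivalences between fibrant objects), it has a genuine gap at exactly the step you yourself flag as the technical heart. The adjunction part is fine: $J$ is checked on generating cofibrations via $J(\Delta[n]) \cong C_*[n]$, and $\widetilde{N}$ of a locally Kan simplicial category is a quasi-category. The representable case of the unit is also fine, since the mapping spaces of $C_*[n]$ are nerves of posets of chains (cubes), hence contractible, so $C_*[n] \to [n]$ is a weak equivalence in $\mathcal{SC}$.

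The gap is in the propagation step. To run the skeletal induction you need the natural map $X \to \widetilde{N}(\widehat{JX})$ to be analyzable homotopy-colimit by homotopy-colimit, which requires that \emph{both} sides, as functors of $X$, preserve the relevant homotopy pushouts and sequential homotopy colimits. The identity does, and $J$ does because it is left Quillen; but the composite also involves the \emph{right} adjoint $\widetilde{N}$ (applied after fibrant replacement), and a right Quillen functor has no general reason to carry homotopy pushouts in $\mathcal{SC}$ to homotopy pushouts in $\qcat$. Indeed, that $\mathbb{R}\widetilde{N}$ preserves homotopy colimits is a consequence of the theorem being proved (an equivalence of homotopy theories preserves them), so invoking it here is circular. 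This is precisely where the cited proofs invest their real work: Dugger and Spivak obtain explicit control of the mapping spaces of $JX$ for arbitrary $X$ via their necklace calculus, which is what lets them verify the unit and counit conditions directly, while Lurie routes the argument through the straightening theorem. Your final step has the same hidden dependency: comparing mapping spaces of a fibrant $\mathcal{C}$ in $\mathcal{SC}$ with mapping spaces computed inside the quasi-category $\widetilde{N}(\mathcal{C})$ is not formal and is part of the same hard analysis. Without an ingredient of this kind the induction does not close, and the proposal reduces the theorem to a claim essentially as strong as the theorem itself.
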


\begin{cor}
There is a commuting square of Quillen equivalences
\[ \xymatrix{\qcat^G \ar@<.5ex>[r] \ar@<.5ex>[d] & \mathcal{SC}^G \ar@<.5ex>[l] \ar@<.5ex>[d] \\
\qcat^{\orbit} \ar@<.5ex>[r] \ar@<.5ex>[u] & \mathcal{SC}^{\orbit}. \ar@<.5ex>[l] \ar@<.5ex>[u]} \]
\end{cor}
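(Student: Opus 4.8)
The plan is to recognize this corollary as a direct instantiation of the general commuting square construction described immediately after Theorem~\ref{grouporbit}. That construction takes as input a Quillen equivalence $\mathcal C \rightleftarrows \mathcal D$ between two model categories, each satisfying the cellularity conditions of Theorem~\ref{cellular}, and produces a commuting square of Quillen equivalences relating $\mathcal C^G$, $\mathcal D^G$, $\mathcal C^{\orbit}$, and $\mathcal D^{\orbit}$. So the only real work is to verify the hypotheses for $\mathcal C = \qcat$ and $\mathcal D = \mathcal{SC}$, and to supply the appropriate Quillen equivalence between them.

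First I would record that both model categories satisfy the cellularity conditions. For $\mathcal{SC}$ this is exactly the theorem proved earlier in this section. For $\qcat$ the conditions hold by Theorem~\ref{functors}: the quasi-category model structure lives on $\SSets$, which we may view as $\Sets^{\mathcal D \times \Deltaop}$ for appropriate $\mathcal D$, and its cofibrations are the monomorphisms. Here I would emphasize the one point that actually requires care, namely that Theorem~\ref{qe} and Theorem~\ref{grouporbit} both require the cellularity conditions themselves, not merely the existence of the $G$-model structure; this is no obstruction, since Theorem~\ref{functors} is itself established by verifying precisely those conditions.

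With the hypotheses in hand, I would assemble the square from its two ingredients. Applying Theorem~\ref{qe} to the Quillen equivalence $J \colon \qcat \rightleftarrows \mathcal{SC} \colon \widetilde{N}$ yields the induced Quillen equivalence $J^G \colon \qcat^G \rightleftarrows \mathcal{SC}^G \colon \widetilde{N}^G$ along the top row. Applying Theorem~\ref{grouporbit} to $\qcat$ and to $\mathcal{SC}$ separately yields the two vertical Quillen equivalences $i^\ast \colon \qcat^{\orbit} \rightleftarrows \qcat^G \colon i_\ast$ and $i^\ast \colon \mathcal{SC}^{\orbit} \rightleftarrows \mathcal{SC}^G \colon i_\ast$, while the bottom row is the Quillen equivalence induced by postcomposing functors $\orbit \to \mathcal C$ with $J$ and $\widetilde{N}$. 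Finally, I would observe that commutativity is automatic: since $i^\ast$ is restriction along $i \colon G \to \mathcal O_G^{op}$ and the horizontal functors in both rows are induced by postcomposition with the fixed adjoint pair $(J, \widetilde{N})$, restriction commutes with postcomposition on the nose, so both the square of left adjoints and the square of right adjoints commute. As this is exactly the content of the general square, there is no obstacle of substance remaining; the essential difficulty, the cellularity of $\mathcal{SC}$, has already been dispatched in the preceding theorem.
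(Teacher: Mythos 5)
Your proposal is correct and matches the paper's (implicit) argument exactly: the corollary is the general commuting square described after Theorem~\ref{grouporbit}, instantiated at the Quillen equivalence $J \colon \qcat \rightleftarrows \mathcal{SC} \colon \widetilde{N}$, with cellularity of $\qcat$ supplied by Theorem~\ref{functors} and cellularity of $\mathcal{SC}$ by the theorem proved just before this corollary. One cosmetic caveat: strict (``on the nose'') commutativity holds only for the square of right adjoints, where restriction along $i$ commutes with postcomposition by $\widetilde{N}$; the square of left adjoints involves the left Kan extension $i_\ast$ rather than restriction, so it commutes up to the canonical natural isomorphism given by uniqueness of adjoints (equivalently, because postcomposition by the left adjoint $J$ preserves the colimits computing $i_\ast$), which is all that is needed.
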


\section{Equivariant Segal categories}

Lastly, we turn to the model of Segal categories.  In this case, we consider two different model structures with the same weak equivalences.

\begin{definition}
A simplicial space $X$ is a \emph{Segal precategory} if $X_0$ is a discrete simplicial set.  It is a \emph{Segal category} if additionally the Segal maps
\[ X_k \rightarrow \underbrace{X_1 \times_{X_0} \cdots \times_{X_0} X_1}_k \] are weak equivalences for all $k \geq 2$.
\end{definition}

For any Segal category $X$, we can take its \emph{objects} to be the set $X_0$.  Its \emph{mapping spaces} $\map_X(x,y)$ are given by the fibers of the map $(d_1, d_0) \colon X_1 \rightarrow X_0 \times X_0$ over a given pair of objects $(x,y)$.  With notions of weak composition, one can define \emph{homotopy equivalences} as in \cite{rezk} and hence a \emph{homotopy category} $\Ho(X)$ associated to $X$.

Given a Segal precategory $X$, there is a functorial construction of a Segal category $LX$ which is weakly equivalent to $X$ in the Segal space model structure \cite[\S 5]{thesis}.

The inclusion functor from the category of Segal precategories into the category of simplicial spaces has a left adjoint which we denote by $(-)_r$. Recall that, for a simplicial set $K$ we denote again by $K$ the constant simplicial space, and by $K^t$ the discrete simplicial space defined by the set of $n$-simplices of $K$ in degree $n$.

\begin{theorem} \cite[3.2]{fibrant}, \cite[5.1, 5.13]{thesis}, \cite{pel}
There exists a cofibrantly generated model structure $\Secat_c$ on the category of Segal precategories satisfying the following conditions.

\begin{itemize}
\item The weak equivalences are the \emph{Dwyer-Kan equivalences}, or maps $X \rightarrow Y$ such that
\begin{itemize}
\item for any objects $x, y \in X_0$, $\map_{LX}(x,y) \rightarrow \map_{LY}(fx,fy)$ is a weak equivalence of simplicial sets, and

\item the induced functor $\Ho(LX) \rightarrow \Ho(LY)$ is an equivalence of categories.
\end{itemize}

\item The fibrant objects are the Reedy fibrant Segal categories.

\item Cofibrations are the monomorphisms.

\item A set of generating cofibrations for $\Secat_c$ is given by
\[ I_c= \{(\partial \Delta [m] \times \Delta [n]^t \cup \Delta [m] \times \partial \Delta [n]^t)_r \rightarrow (\Delta [m] \times \Delta [n]^t)_r \} \]
for all $m \geq 0$ when $n \geq 1$ and for $n=m=0$.
\end{itemize}
\end{theorem}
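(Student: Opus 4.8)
The plan is to realize $\Secat_c$ as a left Bousfield localization of a levelwise model structure on Segal precategories, in direct analogy with the way the complete Segal space structure $\css$ of Theorem~\ref{CSS} is obtained from the Reedy structure. First I would set up the base structure. The category of Segal precategories is the full subcategory of simplicial spaces on those $X$ with $X_0$ discrete, and the adjunction exhibiting $(-)_r$ as left adjoint to the inclusion makes it a reflective subcategory; it is therefore complete, cocomplete, and locally presentable, with colimits computed by forming the colimit of the underlying simplicial spaces and then applying the reflector $(-)_r$. On this subcategory I would take the model structure whose weak equivalences are the levelwise weak equivalences and whose cofibrations are the monomorphisms, generated by $I_c$. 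Verifying that this is a genuine cofibrantly generated model category is the first technical point, since the small object argument must form its pushouts inside the subcategory by means of $(-)_r$.

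Next I would left Bousfield localize this base structure at the set of maps of Segal precategories that force the Segal maps $X_k \to X_1 \times_{X_0} \cdots \times_{X_0} X_1$ to become weak equivalences on fibrant objects. Because the base structure is combinatorial and, its cofibrations being monomorphisms, left proper (every object is cofibrant), the localization exists. Localization does not alter the cofibrations, so these remain the monomorphisms and $I_c$ remains a generating set. By construction the fibrant objects of the localization are the Reedy fibrant local objects, namely the Reedy fibrant Segal precategories whose Segal maps are weak equivalences, which is exactly the class of Reedy fibrant Segal categories.

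The heart of the argument, and the step I expect to be the main obstacle, is the identification of the local weak equivalences with the Dwyer-Kan equivalences. For this I would use the functorial Segal category replacement $L$ together with the mapping spaces $\map_{LX}(x,y)$ and the homotopy category $\Ho(LX)$. In one direction, a local equivalence between fibrant objects is a levelwise equivalence, and on Reedy fibrant Segal categories a levelwise equivalence is readily seen to induce equivalences on all mapping spaces and an equivalence of homotopy categories, hence a Dwyer-Kan equivalence. The reverse implication is the delicate part: one must show that a Dwyer-Kan equivalence becomes a levelwise equivalence after fibrant replacement, which requires comparing the two notions through $L$ and controlling the interaction of the reflector $(-)_r$ and the Segal condition with the fiberwise description of the mapping spaces. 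This is precisely the technical content carried out in \cite{pel} and \cite{thesis}. With this identification in hand, the remaining model category axioms follow by assembling the generating cofibrations $I_c$ and a set of generating acyclic cofibrations (the base acyclic cofibrations together with the localizing maps) through the small object argument.
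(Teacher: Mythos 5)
Your proposal fails at its very first step, and the failure is not a technicality that more care would repair: the ``base'' model structure you propose to localize does not exist. Consider the fold map $\Delta[0] \amalg \Delta[0] \rightarrow \Delta[0]$, a map of (constant, discrete) Segal precategories. In any model structure on Segal precategories whose cofibrations are the monomorphisms and whose weak equivalences are the levelwise weak equivalences, the factorization axiom would produce $\Delta[0] \amalg \Delta[0] \hookrightarrow Z \rightarrow \Delta[0]$ with the first map a monomorphism and the second map a trivial fibration, hence in particular a levelwise weak equivalence. But $Z$ is a Segal precategory, so $Z_0$ is discrete; a weak equivalence from a discrete simplicial set to a point is a bijection, while the monomorphism forces $Z_0$ to have at least two elements --- a contradiction. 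Worse, since a left Bousfield localization changes neither the cofibrations nor the trivial fibrations, if $\Secat_c$ were a localization of such a levelwise structure, then its own (cofibration, trivial fibration) factorization of the fold map would again produce a monomorphism followed by a levelwise equivalence, which is impossible by the same argument. So not only is your base structure unavailable; no presentation of $\Secat_c$ of the kind you describe can exist. This degree-zero discreteness phenomenon is exactly why the analogy with Theorem \ref{CSS} breaks down ($\css$ lives in all simplicial spaces, where the Reedy structure is available to localize), and it is already visible in the statement you are proving: the pair $(m,n)=(1,0)$ is excluded from $I_c$ precisely because the reduction of $\partial\Delta[1] \times \Delta[0]^t \rightarrow \Delta[1] \times \Delta[0]^t$ is this very fold map, which is not a monomorphism.

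For context, the paper does not reprove this theorem; it quotes \cite[3.2]{fibrant}, \cite[5.1, 5.13]{thesis}, and \cite{pel}, and those proofs are structured to route around the obstruction above. They take the Dwyer--Kan equivalences as the weak equivalences from the outset (rather than obtaining them as local equivalences), construct by hand a functorial Segal category replacement $L$ that preserves discreteness in degree zero (the localization functor for Segal spaces destroys it), prove that the $I_c$-cofibrations coincide with the monomorphisms, and then verify the lifting and factorization axioms directly --- the hardest points being that pushouts along trivial cofibrations are again Dwyer--Kan equivalences (which elsewhere one would get from left properness of an ambient structure) and the identification of the fibrant objects as the Reedy fibrant Segal categories, which is the content of \cite[3.2]{fibrant}. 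The issues you flag in your last paragraph (comparing local equivalences with Dwyer--Kan equivalences) are real in spirit, but they sit downstream of a step that cannot be carried out; the correct lesson is that for Segal precategories one must build the model structure directly rather than by localizing.
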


Observe that this model structure actually satisfies the conditions of Theorem \ref{functors}, since its objects are presheaves and the cofibrations are monomorphisms.  However, since we have a restriction on those presheaves, namely that the degree zero space be discrete, some features of the model structure are less intuitive.  Therefore, we include a complete proof.

\begin{theorem}
The model category $\Secat_c$ satisfies the cellularity conditions of Theorem \ref{cellular}.
\end{theorem}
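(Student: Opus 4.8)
The plan is to exploit the fact that $\Secat$ is a reflective subcategory of the category of simplicial spaces, the inclusion functor having the left adjoint $(-)_r$. In a reflective subcategory limits are computed as in the ambient category, so the fixed point functors $(-)^H$ --- being built out of limits --- agree with the corresponding limits of simplicial spaces; colimits, on the other hand, are computed by forming the colimit of simplicial spaces and then applying the reflector $(-)_r$. The category of simplicial spaces is, as noted after Theorem \ref{functors}, a category of set-valued functors with monomorphisms as cofibrations, so the cellularity conditions of Theorem \ref{cellular} already hold there. The whole argument therefore reduces to checking, for each of the three conditions, that the colimit appearing in $\Secat$ in fact coincides with the ambient colimit of simplicial spaces, i.e.\ that the reflector acts trivially; once this is known, each condition is inherited from the presheaf case.

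For condition \eqref{cell1}, a filtered colimit of discrete simplicial sets is again discrete, so a filtered colimit of Segal precategories already has discrete degree-zero part and is computed in simplicial spaces without any reflection. Since $(-)^H$ is a limit and, as in the presheaf setting, the relevant limit commutes with filtered colimits levelwise in $\Sets$, the condition follows. For condition \eqref{cell3}, the coproduct $\coprod_{G/H} A$ has degree-zero part $\coprod_{G/H} A_0$, a coproduct of discrete simplicial sets and hence discrete; thus the tensor is likewise computed in simplicial spaces, the action of $K$ merely permutes the copies of $A$, and passing to $K$-fixed points yields $\coprod_{(G/H)^K} A$, exactly as required.

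The main work, and the step I expect to be the principal obstacle, is condition \eqref{cell2}, since a priori the reflector $(-)_r$ need not commute with the fixed point functor. The crucial observation is that for every generating cofibration in $I_c$ the induced pushout nonetheless keeps the degree-zero part discrete, so the reflector acts trivially and the pushout is again an ambient pushout of simplicial spaces. I would verify this by inspecting the two families in $I_c$. When $n = m = 0$ the generating cofibration is $\varnothing \rightarrow \ast$, adjoining a single object, and the analysis proceeds exactly as in the simplicial category case: attaching the disjoint objects indexed by $G/K$ enlarges the (discrete) degree-zero part by a disjoint discrete set, and applying $(-)^H$ retains precisely those objects indexed by $(G/K)^H$. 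When $n \geq 1$, a direct inspection shows that the cofibration is an isomorphism on degree-zero parts, since for $n \geq 1$ the boundary $\partial \Delta[n]$ already contains all vertices of $\Delta[n]$; hence before reflection both source and target have degree-zero part $\Delta[m] \times (\Delta[n])_0$, and the same discrete set afterward. A pushout along a map that is an isomorphism in degree zero leaves the degree-zero part unchanged, so the resulting pushout is discrete in degree zero and is computed in simplicial spaces. In either case the pushout, together with the corresponding pushout of $H$-fixed diagrams (which is discrete in degree zero for the same reason), is an ambient pushout of simplicial spaces; since the fixed point functor preserves such pushouts in the presheaf category underlying Theorem \ref{functors}, the functor $(-)^H$ carries the original square to the desired pushout square in $\Secat$, completing the verification of all three conditions.
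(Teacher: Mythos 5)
Your proposal is correct in substance, and its overall strategy --- inherit everything from the ambient presheaf category of simplicial spaces via the reflective embedding --- is the same as the paper's, but your execution of the crucial condition \eqref{cell2} is genuinely different. The paper never inspects the degree-zero behavior of the generating cofibrations. Instead, it forms the auxiliary pushout $Y'$ of $X \leftarrow \coprod_{G/K} A \rightarrow \coprod_{G/K} B$ in simplicial spaces (along the \emph{unreduced} Reedy generating cofibration), uses that the reflector $(-)_r$ is a left adjoint to identify $(Y')_r$ with the pushout $Y$ taken in $\Secat_c$, and then concludes by pasting: in the diagram whose three columns are $\coprod_{(G/K)^H}A \to \coprod_{(G/K)^H}B$, $\coprod_{(G/K)^H}A_r \to \coprod_{(G/K)^H}B_r$, and $X^H \to Y^H$, the left square and the outer rectangle are pushouts, hence so is the right square. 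You instead prove that the reflector acts trivially on every colimit in sight, because each generating cofibration of $I_c$ is either $\varnothing \to \ast$ (when $m=n=0$) or an isomorphism in degree zero (when $n \geq 1$). That observation is correct, and it is in fact the point the paper leaves implicit: the paper's assertion that the outer rectangle with corner $Y^H$ (rather than $(Y')^H$) is a pushout tacitly uses that $Y'$ is already reduced, which is exactly your degree-zero computation. So your route exposes the mechanism more clearly, while the paper's route is engineered to quote its black boxes exactly as stated.

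That last point is where your write-up needs tightening. Theorem \ref{functors} gives condition \eqref{cell2} only for pushouts along $G/K \otimes f$ with $f$ a generating cofibration of the ambient Reedy structure; in the case $n \geq 1$ you apply it to the reduced maps $A_r \to B_r$, which are monomorphisms of simplicial spaces but are \emph{not} Reedy generating cofibrations. The gap is easily closed in either of two ways: (a) observe that the presheaf-level argument behind Theorem \ref{functors} works for an arbitrary monomorphism $f$ (levelwise, a pushout of sets along an injection is a disjoint union, and fixed points preserve such pushouts); or (b) note that since $A \to B$ is an isomorphism in degree zero one has $B_r \cong A_r \cup_A B$, so your pushout along $\coprod_{G/K}(A_r \to B_r)$ is also a pushout along the genuine Reedy generating cofibration $\coprod_{G/K}(A \to B)$, to which Theorem \ref{functors} applies verbatim --- this second repair is, in effect, the paper's pasting argument.
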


\begin{proof}
To show that condition \eqref{cell1} is satisfied, we need only observe that finite limits and filtered colimits commute in the category of sets.  Since limits are computed levelwise in the category of Segal precategories, and the fixed point functor $(-)^H$ is defined as a finite limit, the desired condition holds.

Establishing condition \eqref{cell3} is similar to the case of simplicial categories.

It remains to show that condition \eqref{cell2} holds.   Let $A \rightarrow B$ be a generating cofibration for the Reedy model structure on simplicial spaces, of the form
\[ \partial \Delta [m] \times \Delta[n]^t \cup \Delta[m] \times \partial \Delta[n]^t \rightarrow \Delta[m] \times \Delta[n]^t \]
for some $m, n \geq 0$.  We know that the Reedy model structure satisfies the desired condition for these generating cofibrations, using Theorem \ref{functors}.

Any generating cofibration of $\Secat_c$ is of the form $A_r \rightarrow B_r$, where $A \rightarrow B$ is as above and $(-)_r$ denotes the reduction functor.   Suppose that we have a pushout diagram
\[ \xymatrix{\coprod_{G/K} A_r \ar[r] \ar[d] & X \ar[d] \\
\coprod_{G/K} B_r \ar[r] & Y.} \]
Consider also the pushout
\[ \xymatrix{\coprod_{G/K} A \ar[r] \ar[d] & X \ar[d] \\
\coprod_{G/K} B \ar[r] & Y'} \]
taken in the category of simplicial spaces.
If we assume that $X$ is a Segal precategory, so that $X=X_r$, then the fact that the reduction functor is a left adjoint implies that $(Y')_r \simeq Y$.  Therefore, in the diagram
\[ \xymatrix{\coprod_{(G/K)^H} A \ar[r] \ar[d] & \coprod_{(G/K)^H} A_r \ar[r] \ar[d] & X^H \ar[d] \\
\coprod_{(G/K)^H} B \ar[r] & \coprod_{(G/K)^H} B_r \ar[r] & Y^H} \]
the left square and the large rectangle are both pushouts; therefore, the right square is also a pushout.
\end{proof}

\begin{cor}
The model structure $\Secat_c^G$ exists and there is a Quillen equivalence
\[ i^* \colon \Secat_c^{\orbit} \rightleftarrows \Secat_c^G \colon i_*. \]
\end{cor}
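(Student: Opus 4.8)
The plan is to obtain this corollary as a formal consequence of the machinery of Section~2, with all the genuine work already carried out in the preceding theorem. The corollary makes two assertions---existence of the $G$-model structure, and a Quillen equivalence with the orbit-diagram model---and each follows by feeding the cellularity verification just completed into a general theorem. Since $\Secat_c$ is cofibrantly generated and its fixed point functors have been shown to satisfy conditions \eqref{cell1}--\eqref{cell3}, both hypotheses required downstream are already in place.

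For the existence statement I would apply Theorem~\ref{cellular} with $\mathcal C = \Secat_c$: a cofibrantly generated model category whose fixed point functors satisfy the cellularity conditions admits the $G$-model structure, which is precisely $\Secat_c^G$. For the comparison with orbit diagrams I would then apply Theorem~\ref{grouporbit} to the same category, producing the adjoint pair
\[ i^* \colon \Secat_c^{\orbit} \rightleftarrows \Secat_c^G \colon i_* \]
and asserting that it is a Quillen equivalence. The one point worth checking is that the functor $i \colon G \rightarrow \mathcal O_G^{op}$ of Section~2 is actually defined, which requires the trivial subgroup to be among the subgroups under consideration; since $G$ is discrete and we use the full $G$-model structure (all subgroups $H$ of $G$), this is automatic, so the left Kan extension $i_*$ exists and Theorem~\ref{grouporbit} applies verbatim.

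The main obstacle is not in this corollary at all, but in the theorem immediately preceding it, namely the verification of condition \eqref{cell2} for $\Secat_c$. The subtlety there is that the objects are not arbitrary presheaves but Segal precategories, whose degree-zero part must remain discrete, so one cannot directly reuse the argument of Theorem~\ref{functors}; instead one factors the generating cofibrations through the reduction functor $(-)_r$ and compares pushouts taken in simplicial spaces with those taken among Segal precategories. Once that comparison is settled, as in the preceding proof, the present corollary requires nothing beyond citing Theorems~\ref{cellular} and \ref{grouporbit}, and one may further assemble it, via Theorem~\ref{qe}, into commuting squares with the quasi-category, complete Segal space, and simplicial category models.
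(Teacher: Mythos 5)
Your proposal is correct and matches the paper's (implicit) argument exactly: the corollary is stated without proof precisely because it follows by feeding the cellularity verification for $\Secat_c$ into Theorem~\ref{cellular} for existence and Theorem~\ref{grouporbit} for the Quillen equivalence with the orbit-diagram category. Your additional check that the trivial subgroup is available (so that $i$ is defined) is a sound observation, consistent with the paper's remark that this is only a concern for the $\mathcal F$-model structure.
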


For the purposes of comparison with other models, we need another model structure with the same weak equivalences but different fibrations and cofibrations.  To define a generating set of cofibrations, we require the following construction.

For $m \geq 1$ and $n \geq 0$, define $P_{m,n}$ to be the pushout of the diagram
\[ \xymatrix{\partial \Delta [m] \times \Delta[n]^t_0 \ar[r] \ar[d] & \partial \Delta [m] \times \Delta [n]^t \ar[d] \\
\Delta [n]^t_0 \ar[r] & P_{m,n}.} \]
If $m=0$, then we define $P_{m,0}$ to be the empty simplicial space.  For all $m \geq 0$
and $n \geq 1$, define $Q_{m,n}$ to be the pushout of the diagram
\[ \xymatrix{\Delta [m] \times \Delta [n]^t_0 \ar[r] \ar[d] & \Delta [m] \times \Delta [n]^t \ar[d] \\
\Delta [n]^t_0 \ar[r] & Q_{m,n}.} \]
For each $m$ and $n$, the map $\partial \Delta [m] \times \Delta [n]^t \rightarrow \Delta[m] \times \Delta[n]^t$ induces a map
$i_{m,n}:P_{m,n} \rightarrow Q_{m,n}$.  Note
that when $m \geq 2$ this construction gives exactly the same
objects as those given by reduction, namely that $P_{m,n}$ is
precisely $(\partial \Delta [m] \times \Delta [n]^t)_r$ and likewise
$Q_{m,n}$ is precisely $(\Delta [m] \times \Delta [n]^t)_r$.

\begin{theorem} \cite[7.1]{thesis}
There is a model structure $\Secat_f$ on the category of Segal precategories with the following properties.

\begin{itemize}
\item The weak equivalences are the Dwyer-Kan equivalences.

\item The cofibrations are the maps which can be formed by taking
iterated pushouts along the maps of the set
\[ I_f = \{i_{m,n}:P_{m,n} \rightarrow Q_{m,n} \mid m,n \geq 0\}. \]

\item The fibrant objects are the Segal categories which are fibrant in the projective model structure on simplicial spaces.
\end{itemize}
\end{theorem}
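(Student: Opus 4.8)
The plan is to build $\Secat_f$ in two stages: first construct an auxiliary ``projective'' model structure on the category of Segal precategories, whose weak equivalences and fibrations are detected levelwise, and then left Bousfield localize it at the Segal maps so that the fibrant objects become Segal categories and the weak equivalences become the Dwyer--Kan equivalences. This mirrors the construction of $\Secat_c$, but uses the projective rather than the injective cofibrations, and it parallels the way the complete Segal space structure $\css$ is obtained as a localization of the Reedy structure (Theorem \ref{CSS}).

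For the first stage I would transfer the projective (levelwise) model structure on simplicial spaces along the reduction adjunction. Recall that $(-)_r$ is left adjoint to the inclusion $\iota$ of Segal precategories into $\SSets^{\Deltaop}$. Applying the Kan recognition (transfer) theorem to $(-)_r \dashv \iota$, one declares a map of Segal precategories to be a weak equivalence or fibration exactly when it is one levelwise, and the generating cofibrations are the images under $(-)_r$ of the projective generating cofibrations $\partial\Delta[m]\times\Delta[n]^t \to \Delta[m]\times\Delta[n]^t$. As already observed, for $m \geq 2$ these reductions are precisely the maps $i_{m,n}\colon P_{m,n}\to Q_{m,n}$, while for $m=0,1$ the explicit pushout definitions of $P_{m,n}$ and $Q_{m,n}$ supply the correctly reduced maps; hence the generating set is exactly $I_f$. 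To apply the transfer theorem I must check that the domains of $I_f$ and of the reduced generating trivial cofibrations are small (immediate, as everything in sight is locally presentable) and, crucially, that every relative cell complex built from the reduced generating trivial cofibrations is a levelwise weak equivalence.

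For the second stage I would left Bousfield localize the auxiliary structure at the set of Segal maps. The auxiliary structure is cofibrantly generated and left proper, both inherited from the projective structure on simplicial spaces, so the localization exists; by construction its fibrant objects are the fibrant objects of the auxiliary structure that are local with respect to the Segal maps, that is, the projective-fibrant Segal categories, as required. It then remains to identify the local weak equivalences with the Dwyer--Kan equivalences, which proceeds exactly as in the $\Secat_c$ case: using the functorial Segal category replacement $L$ and the characterization of local equivalences by mapping into local objects, a map $f\colon X\to Y$ is a local equivalence if and only if it induces a levelwise equivalence after applying $L$, which unwinds to a weak equivalence $\map_{LX}(x,y)\to\map_{LY}(fx,fy)$ on mapping spaces together with an equivalence of homotopy categories $\Ho(LX)\to\Ho(LY)$.

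The main obstacle is the acyclicity condition in the transfer step, which is genuinely subtler here than in an honest presheaf category, because Segal precategories do not form one: the requirement that the degree-zero space be discrete means that pushouts and transfinite composites must be checked to remain inside the subcategory, and the right adjoint $\iota$ does not commute with these colimits. This is exactly why the specialized objects $P_{m,n}$ and $Q_{m,n}$, rather than the naive products, are needed, since attaching cells along $i_{m,n}$ is arranged never to introduce new nondegenerate zero-simplices in positive simplicial degree. Carefully tracking how $(-)_r$ interacts with these cell attachments, so that the reduced generating trivial cofibrations stay levelwise acyclic, is the technical heart of the argument.
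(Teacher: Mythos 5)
Your two-stage strategy cannot produce $\Secat_f$, and the obstruction is not where you locate it. The transfer step you worry about is in fact the unproblematic part: the reductions of the projective generating trivial cofibrations are levelwise trivial cofibrations (reduction just collapses a copy of a connected horn or simplex over each vertex to a point), and since Segal precategories are closed under colimits in $\SSets^{\Deltaop}$ (the inclusion even has a right adjoint $R$, so your parenthetical claim that it fails to commute with these colimits is backwards), relative cell complexes on them stay levelwise trivial cofibrations. The fatal problem is the localization step. In any left Bousfield localization, a local equivalence between local objects is a weak equivalence of the original model structure; so if $\Secat_f$ were a left Bousfield localization of a levelwise structure, every Dwyer--Kan equivalence between projective-fibrant Segal categories would have to be a levelwise weak equivalence. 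That is false: let $\mathcal I$ be the groupoid with two objects and a unique isomorphism between each ordered pair, and let $E=(N\mathcal I)^t$ be its discrete nerve. Then $E$ is a Segal category whose levels are discrete, hence projective-fibrant, and the inclusion $\ast\to E$ of one object is a Dwyer--Kan equivalence of projective-fibrant Segal categories that is not levelwise (level one of $E$ has four simplices). So no choice of localizing set can give a model structure with both the stated weak equivalences and the stated fibrant objects. This is exactly the difference between Segal categories and complete Segal spaces: completeness is what forces equivalences between fibrant objects to be levelwise, and discreteness in degree zero is incompatible with completeness. For the same reason your appeal to ``exactly as in the $\Secat_c$ case'' is circular: $\Secat_c$ is not, and cannot be, constructed as a localization of a levelwise structure on Segal precategories either. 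This is precisely why the cited proof verifies the model category axioms directly, with the Dwyer--Kan equivalences built in from the start: generating acyclic cofibrations are produced by a cardinality argument, factorizations by the small object argument, and the technical work consists of lemmas showing that pushouts along $I_f$-cells preserve Dwyer--Kan equivalences, using the localization functor $L$ and the projective Segal space structure on all simplicial spaces only as a comparison tool.

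There is a second, independent error: the generating set of your transferred structure is not $I_f$. For $m\geq 2$ the reductions do agree with $i_{m,n}$, but for $m=1$ the simplicial space $\partial\Delta[1]\times\Delta[n]^t$ already has discrete degree-zero space, so its reduction is itself, namely $\Delta[n]^t\amalg\Delta[n]^t$, whereas $P_{1,n}$ additionally glues the two copies along their vertices. The discrepancy is not cosmetic: at $(m,n)=(1,0)$ your generating map is the fold map $\ast\amalg\ast\to\ast$, while $i_{1,0}$ is the identity of a point. The fold map is not a monomorphism and is not an $I_f$-cofibration (every $I_f$-cell map is injective on objects, since each $i_{m,n}$ is), so the cofibration class you generate genuinely differs from the one in the statement -- this is the very reason the objects $P_{m,n}$, $Q_{m,n}$ are defined by those pushouts rather than by reduction. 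These fold-type cofibrations are also exactly what makes your unproved assertion of left properness doubtful: left properness is not inherited along a transfer, and here the transferred cofibrations are not even levelwise cofibrations of simplicial spaces.
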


\begin{theorem}
The model category $\Secat_f$ satisfies the cellularity conditions of Theorem \ref{cellular}.
\end{theorem}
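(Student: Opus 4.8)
The plan is to verify the three cellularity conditions of Theorem \ref{cellular} by following the template of the proof for $\Secat_c$, isolating the one place where the different set of generating cofibrations forces a genuinely new argument. First I would dispatch conditions \eqref{cell1} and \eqref{cell3} exactly as in the $\Secat_c$ case. Condition \eqref{cell1} uses only that limits in Segal precategories are computed levelwise, that $(-)^H$ is a finite limit, and that finite limits commute with filtered colimits in $\Sets$; none of this sees the generating cofibrations, so the argument carries over verbatim. Condition \eqref{cell3} concerns only the tensor $G/K \otimes -$, which is again a coproduct on which $H$ acts by permuting the copies via its action on $G/K$, so the induced map $(G/K)^H \otimes A \rightarrow (G/K \otimes A)^H$ is an isomorphism by the same reasoning as before.

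The substance is therefore condition \eqref{cell2}, where the generating cofibrations are now the maps $i_{m,n} \colon P_{m,n} \rightarrow Q_{m,n}$. The key structural observation is that both $P_{m,n}$ and $Q_{m,n}$ are defined as pushouts, in the category of simplicial spaces, of monomorphisms built from $\partial \Delta[m] \times \Delta[n]^t$, $\Delta[m] \times \Delta[n]^t$, and the discrete degree-zero pieces $\Delta[n]^t_0$. For $m \geq 2$ we have $P_{m,n} = (\partial \Delta[m] \times \Delta[n]^t)_r$ and $Q_{m,n} = (\Delta[m] \times \Delta[n]^t)_r$, so the $i_{m,n}$ are reduction maps and the argument given for $\Secat_c$ applies: since $\partial \Delta[m] \times \Delta[n]^t \rightarrow \Delta[m] \times \Delta[n]^t$ is a monomorphism of simplicial spaces, one forms the corresponding pushout $Y'$ in simplicial spaces, uses that $(-)^H$ preserves pushouts along coproducts of monomorphisms there (which follows from the levelwise computation in $\Sets$, as underlies Theorem \ref{functors}), observes that reduction is a left adjoint so that $(Y')_r \cong Y$ whenever the source is a Segal precategory, and concludes by the same pasting of pushout squares. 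The remaining cases are $m = 0$ and $m = 1$, for which $i_{m,n}$ is genuinely not a reduction map; here I would run the same passage to simplicial spaces, attaching the cells $\coprod_{G/K} P_{m,n} \rightarrow \coprod_{G/K} Q_{m,n}$ one pushout at a time and tracking which cells are fixed by $H$.

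The main obstacle is the interaction between the reduction functor and the fixed point functor. Reduction is a left adjoint while $(-)^H$ is a limit, so they need not commute in general; what makes the argument work is that the degree-zero discretization is performed along monomorphisms and in an equivariant manner, so that taking $H$-fixed points retains precisely the cells indexed by $(G/K)^H$. Checking that this bookkeeping is compatible with the reduction in the low-degree cases $m = 0, 1$ — where $P_{m,n}$ and $Q_{m,n}$ involve nontrivial identifications in degree zero rather than a clean reduction — is the delicate point, and it is exactly the discreteness bookkeeping already present, in simpler form, in the $\Secat_c$ argument.
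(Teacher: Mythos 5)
Your treatment of conditions \eqref{cell1} and \eqref{cell3}, and of condition \eqref{cell2} for the generating cofibrations with $m \geq 2$, matches the paper's proof. The gap is in the cases $m = 0$ and $m = 1$, which you correctly single out as the only place needing a new idea but then do not actually handle: ``attaching the cells one pushout at a time and tracking which cells are fixed by $H$'' is not an argument in $\Secat_f$, because colimits of Segal precategories are not computed levelwise --- the reduction functor intervenes --- so there is no ambient notion of ``cells'' whose $H$-fixed points you can track. Indeed, the entire content of condition \eqref{cell2} here is to transport the pushout to a category (simplicial spaces) where it is computed levelwise; your plan for the low-degree cases presupposes the very thing that needs proving. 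Worse, the premise on which you base these cases --- that for $m \leq 1$ the map $i_{m,n}$ is ``genuinely not a reduction map'' --- is the opposite of the fact that makes the proof work.

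The paper resolves the low-degree cases by explicit identification, not by cell bookkeeping. For $m = 0$ the map $i_{0,n}$ is an isomorphism, so pushouts along it are preserved by any functor and there is nothing to prove. For $m = 1$, rewriting the pushout defining $P_{1,n}$ (the left-hand vertical map becomes the fold map $\Delta[n]^t_0 \amalg \Delta[n]^t_0 \rightarrow \Delta[n]^t_0$) identifies $P_{1,n}$ with $\Delta[n]^t = (\Delta[n]^t)_r$, while $Q_{1,n} = (\Delta[1] \times \Delta[n]^t)_r$. Thus $i_{1,n}$ is after all the reduction of a monomorphism of simplicial spaces, namely $\Delta[n]^t \rightarrow \Delta[1] \times \Delta[n]^t$ --- just not of one of the generating cofibrations $\partial\Delta[m] \times \Delta[n]^t \rightarrow \Delta[m] \times \Delta[n]^t$ --- and then the two-pushout pasting argument from the $\Secat_c$ proof (form the corresponding pushout in simplicial spaces, apply Theorem \ref{functors} there, and use that $(-)_r$ is a left adjoint) applies verbatim. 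Supplying these two computations is exactly what your proposal is missing; without them the theorem is not proved.
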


\begin{proof}
Since the objects and weak equivalences in $\Secat_c$ and $\Secat_f$ are the same, the cellularity conditions \eqref{cell1} and \eqref{cell3} continue to hold for $\Secat_f$.

We need only prove that condition \eqref{cell2} holds.  Consider a generating cofibration $A \rightarrow B$ for the projective model structure on simplicial spaces, which has the form
\[ \partial \Delta[m] \times \Delta[n]^t \rightarrow \Delta[m] \times \Delta[n]^t \]
for some $m, n \geq 0$.  This model structure satisfies the cellularity conditions by Theorem \ref{functors}.

For any $m \geq 2$, the map $P_{m,n} \rightarrow Q_{m,n}$ coincides with $A_r \rightarrow B_r$, so the argument given for $\Secat_c$ applies.  When $m=0$, the map is an isomorphism.  Therefore, we need only consider the case when $m=1$.

The pushout diagram defining $P_{1,n}$ can be rewritten as
\[ \xymatrix{\Delta[n]^t_0 \amalg \Delta[n]^t_0 \ar[r] \ar[d] & \Delta[n]^t \amalg \Delta[n]^t \ar[d] \\
\Delta[n]^t_0 \ar[r] & P_{1,n}.} \]
The top horizontal map is the inclusion, and the left horizontal map is the fold map, so it follows that $P_{1,m} = \Delta[n]^t$, which coincides with $(\Delta[n]^t)_r$ since $\Delta[n]^t_0$ is already discrete.  Since $Q_{1,n} = (\Delta[1] \times \Delta[n]^t)_r$, we simply need to consider the map $(\Delta[n]^t)_r \rightarrow (\Delta[1] \times \Delta[n]^t)_r$.  But this map is the reduction of a cofibration in the projective model structure, so we can apply the same argument as the one used for $\Secat_c$ to the diagram
\[ \xymatrix{\coprod_{(G/K)^H} \Delta[n]^t \ar[r] \ar[d] & \coprod_{(G/K)^H} (\Delta[n]^t)_r \ar[r] \ar[d] & X^H \ar[d] \\
\coprod_{(G/K)^H} \Delta[1] \times \Delta[n]^t \ar[r] & \coprod_{(G/K)^H} (\Delta[1] \times \Delta[n]^t)_r \ar[r] & Y^H.} \]
It follows that the right-hand square is a pushout, which is what we needed to prove.
\end{proof}

\begin{cor}
The model structure $\Secat_f^g$ exists and there is a Quillen equivalence
\[ i^* \colon \Secat_f^{\orbit} \rightleftarrows \Secat_f^G \colon i_*. \]
\end{cor}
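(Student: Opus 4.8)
The plan is to invoke the two structural results from Section 2 directly, since the essential work has already been carried out in the immediately preceding theorem. First I would note that $\Secat_f$ is a cofibrantly generated model category, which is part of the statement establishing this model structure and is precisely the standing hypothesis required by both Theorem \ref{cellular} and Theorem \ref{grouporbit}. The preceding theorem then verifies that $\Secat_f$ satisfies all three cellularity conditions of Theorem \ref{cellular}, for every subgroup $H$ of $G$.

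With these hypotheses in hand, the existence of the $G$-model structure $\Secat_f^G$ follows at once from Theorem \ref{cellular}. Applying Theorem \ref{grouporbit} to the same model category then yields the asserted Quillen equivalence
\[ i^* \colon \Secat_f^{\orbit} \rightleftarrows \Secat_f^G \colon i_*. \]
This is exactly the same deduction used to pass from the earlier cellularity theorems to the corresponding corollaries for $\mathcal{SC}$ and $\Secat_c$.

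I do not expect any real obstacle here, as the difficult verification of the cellularity conditions — in particular condition \eqref{cell2} for the generating cofibrations $i_{m,n} \colon P_{m,n} \rightarrow Q_{m,n}$, with the delicate case $m=1$ reduced to the projective cofibration $(\Delta[n]^t)_r \rightarrow (\Delta[1] \times \Delta[n]^t)_r$ — was completed in the preceding proof. The only point worth confirming is that, since we are working with the full $G$-model structure (taking $H$ to range over all subgroups of $G$), the trivial subgroup is automatically among the relevant subgroups, so that the functor $i \colon G \rightarrow \mathcal O_G^{op}$ defining the left Kan extension adjunction is available; this is precisely the hypothesis flagged in the remark following Theorem \ref{grouporbit}, and it holds without further assumption in our setting.
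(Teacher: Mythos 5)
Your proposal is correct and matches the paper's reasoning exactly: the corollary is stated without a separate proof precisely because it follows immediately from the preceding cellularity theorem together with Theorem \ref{cellular} and Theorem \ref{grouporbit}, just as in the earlier corollaries for $\mathcal{SC}$ and $\Secat_c$. Your added observation about the trivial subgroup (so that $i \colon G \rightarrow \mathcal O_G^{op}$ is defined) is a correct and harmless extra check, matching the remark following Theorem \ref{grouporbit}.
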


We now have a number of comparisons with other models, beginning with the one connecting $\Secat_c$ and $\Secat_f$.

\begin{prop} \cite[7.5]{thesis}
The identity functor induces a Quillen equivalence
\[ I \colon \Secat_f \rightleftarrows \Secat_c \colon J. \]
\end{prop}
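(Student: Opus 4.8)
The plan is to exploit the fact that $\Secat_f$ and $\Secat_c$ share the same underlying category of Segal precategories and, crucially, the same class of weak equivalences, namely the Dwyer--Kan equivalences. Because of this, once I know that $(I, J)$ is a Quillen pair, the assertion that it is a Quillen equivalence becomes essentially formal. Indeed, by the standard criterion it suffices to check that for every cofibrant object $X$ of $\Secat_f$ and every fibrant object $Y$ of $\Secat_c$, the map $IX = X \to Y$ is a weak equivalence in $\Secat_c$ if and only if its adjoint $X \to JY = Y$ is a weak equivalence in $\Secat_f$; since $I$ and $J$ are the identity functor and the two model structures have identical weak equivalences, both conditions refer to the very same map and the very same notion, so the equivalence is immediate. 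Thus the only real work is to verify that the identity functor $I \colon \Secat_f \to \Secat_c$ is a left Quillen functor.

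Since $I$ is the identity on the underlying category and the weak equivalences coincide, $I$ preserves weak equivalences automatically, so it is enough to show that $I$ preserves cofibrations; preservation of trivial cofibrations then follows formally, as a trivial cofibration of $\Secat_f$ is a monomorphism that is a Dwyer--Kan equivalence and hence a trivial cofibration of $\Secat_c$. Now the cofibrations of $\Secat_c$ are exactly the monomorphisms, and monomorphisms in the category of Segal precategories are closed under pushout, transfinite composition, and retract. The cofibrations of $\Secat_f$ are built as iterated pushouts along the generating set $I_f = \{i_{m,n} \colon P_{m,n} \to Q_{m,n}\}$. Therefore it suffices to prove that each generating cofibration $i_{m,n}$ is a monomorphism.

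To do this I would inspect the pushout squares defining $P_{m,n}$ and $Q_{m,n}$. The map $i_{m,n}$ is induced by the monomorphism $\partial \Delta[m] \times \Delta[n]^t \to \Delta[m] \times \Delta[n]^t$, and $P_{m,n}$, $Q_{m,n}$ are formed from these products by gluing along $\Delta[n]^t_0$ in a compatible way, so a levelwise check shows that $i_{m,n}$ is injective in each simplicial degree. For $m \geq 2$ one may instead note that $i_{m,n}$ is identified with the reduction $A_r \to B_r$ of a monomorphism, which is a generating cofibration of $\Secat_c$ and hence a monomorphism; the cases $m = 0$ and $m = 1$ can be handled using the explicit descriptions $P_{1,n} = \Delta[n]^t$ and $Q_{1,n} = (\Delta[1] \times \Delta[n]^t)_r$ obtained in the analysis of $\Secat_f$ above. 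This establishes that $I$ is left Quillen.

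Finally I would assemble the pieces: the previous step shows $(I, J)$ is a Quillen pair, and the criterion recalled in the first paragraph then shows it is a Quillen equivalence. Equivalently, one can observe that $\Ho(\Secat_f)$ and $\Ho(\Secat_c)$ are both the localization of the same category at the same class of maps, so the total derived functors of $I$ and $J$ are mutually inverse equivalences. The one genuinely non-formal point, and hence the main obstacle, is verifying that the generating cofibrations of $\Secat_f$ are monomorphisms, i.e., that the $\Secat_f$-cofibrations are contained in the $\Secat_c$-cofibrations; once this containment is in hand, everything else is forced by the coincidence of weak equivalences.
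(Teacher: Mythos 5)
Your proposal is correct and takes essentially the same route as the proof of the cited result \cite[7.5]{thesis} (this paper itself states the proposition as a quotation and gives no proof): the identity functors form a Quillen pair because the generating cofibrations $i_{m,n}$ of $\Secat_f$ are monomorphisms and monomorphisms are closed under pushout, transfinite composition, and retract, and a Quillen pair between two model structures with the same underlying category and the same weak equivalences is automatically a Quillen equivalence. One minor slip: for $m \geq 2$ the map $i_{m,n} \colon (\partial\Delta[m]\times\Delta[n]^t)_r \rightarrow (\Delta[m]\times\Delta[n]^t)_r$ is \emph{not} a generating cofibration of $\Secat_c$ (those are reductions of the full boundary inclusions $(\partial\Delta[m]\times\Delta[n]^t \cup \Delta[m]\times\partial\Delta[n]^t)_r \rightarrow (\Delta[m]\times\Delta[n]^t)_r$), so monomorphy cannot be concluded by that identification---nor from reduction being a left adjoint, since left adjoints need not preserve monomorphisms---but your primary justification, the direct levelwise injectivity check on the defining pushouts, is the right one and suffices.
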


\begin{cor}
The identity functor induces a commutative square of Quillen equivalences
\[ \xymatrix{\Secat_f^G \ar@<.5ex>[r] \ar@<.5ex>[d] & \Secat_c^G \ar@<.5ex>[l] \ar@<.5ex>[d] \\
\Secat_f^{\orbit} \ar@<.5ex>[r] \ar@<.5ex>[u] & \Secat_c^{\orbit}. \ar@<.5ex>[l] \ar@<.5ex>[u]} \]
\end{cor}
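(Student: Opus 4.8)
The plan is to recognize this corollary as a direct instance of the general commutative square of Quillen equivalences recorded just after Theorem \ref{grouporbit}, specialized to the case $\mathcal C = \Secat_f$ and $\mathcal D = \Secat_c$ together with the non-equivariant Quillen equivalence $I \colon \Secat_f \rightleftarrows \Secat_c \colon J$ of the preceding proposition. Thus essentially no new work is required beyond assembling the pieces already in place.

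First I would observe that both $\Secat_f$ and $\Secat_c$ satisfy the cellularity conditions of Theorem \ref{cellular}, which is exactly the content of the two theorems proved above in this section. With this in hand, Theorem \ref{grouporbit} supplies the two vertical Quillen equivalences $i^* \colon \Secat_f^{\orbit} \rightleftarrows \Secat_f^G \colon i_*$ and $i^* \colon \Secat_c^{\orbit} \rightleftarrows \Secat_c^G \colon i_*$. Next, since $I \colon \Secat_f \rightleftarrows \Secat_c \colon J$ is a Quillen equivalence between model categories verifying the cellularity conditions, Theorem \ref{qe} yields the induced top horizontal Quillen equivalence $\Secat_f^G \rightleftarrows \Secat_c^G$. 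Finally, the bottom horizontal equivalence $\Secat_f^{\orbit} \rightleftarrows \Secat_c^{\orbit}$ is obtained by postcomposing diagrams indexed by $\orbit$ with $I$ and $J$; because these are the identity on underlying Segal precategories and the projective model structure on each diagram category is defined levelwise, this is again a Quillen equivalence.

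The one point needing care, which I would treat as the main (and only mild) obstacle, is the commutativity of the square. The horizontal functors are all defined by postcomposition with $I$ or $J$, while the vertical functors $i_*$ are left Kan extensions along $i \colon G \rightarrow \orbit$; commutativity then reduces to the standard fact that left Kan extension commutes with postcomposition by a left adjoint, together with the compatibility of fixed points with restriction already used in verifying the cellularity conditions. Since this is precisely the general assertion established after Theorem \ref{grouporbit}, I expect the square to commute on the nose, and the proof to consist of citing Theorems \ref{cellular}, \ref{qe}, and \ref{grouporbit} in sequence, exactly as in the analogous corollaries earlier in the paper.
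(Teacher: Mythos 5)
Your proposal is correct and matches the paper's (implicit) argument exactly: the corollary is stated without separate proof precisely because it is the general commutative square following Theorems \ref{qe} and \ref{grouporbit}, instantiated with $\mathcal C = \Secat_f$, $\mathcal D = \Secat_c$, and the identity Quillen equivalence, once both model categories are known to satisfy the cellularity conditions. Your attention to the commutativity of the square is a reasonable extra care, but the paper subsumes it in the general square already established in Section 2.
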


Next, we consider the comparison between simplicial categories and Segal categories.  The functor $N$ taking a simplicial category to its simplicial nerve, a Segal precategory (in fact, a Segal category) has a left adjoint which we denote by $F$.

\begin{theorem} \cite[8.6]{thesis}
The Quillen pair
\[ F \colon \Secat_f \rightleftarrows \mathcal{SC} \colon N  \] is
a Quillen equivalence.
\end{theorem}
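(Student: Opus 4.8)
The plan is to verify the two conditions of the standard criterion for a Quillen equivalence: that $(F,N)$ is a Quillen pair, and that $N$ reflects weak equivalences between fibrant objects while the derived unit is a weak equivalence on all cofibrant objects. The guiding observation is that the weak equivalences in both $\Secat_f$ and $\mathcal{SC}$ are Dwyer--Kan equivalences, detected by mapping spaces and homotopy categories, so both conditions can be phrased in those terms.

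First I would check that $(F,N)$ is a Quillen pair. It is cleanest to verify that the right adjoint $N$ preserves fibrations and trivial fibrations. Using the description of the fibrant objects of $\Secat_f$ as the Segal categories that are fibrant in the projective model structure on simplicial spaces, this amounts to showing that the simplicial nerve of a simplicial category with Kan mapping spaces is a projective-fibrant simplicial space satisfying the Segal condition; this follows because the nerve sends Kan mapping spaces to Kan simplicial sets and takes the categorical composition to strictly associative (indeed isomorphism) Segal data. For the acyclic part I would use that $N$ both preserves and reflects weak equivalences: since $\map_{N\mathcal C}(x,y) \cong \Map_\mathcal C(x,y)$ and $\Ho(N\mathcal C) \cong \pi_0\mathcal C$, a simplicial functor is a Dwyer--Kan equivalence precisely when its nerve is. Combined with preservation of fibrations, this gives that $N$ preserves trivial fibrations, and, in particular, $N$ reflects weak equivalences between fibrant objects, supplying the first of the two conditions for free.

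It then remains to show that for every cofibrant Segal precategory $X$ the derived unit $X \to N(\widehat{FX})$, where $\widehat{FX}$ is a fibrant replacement of $FX$, is a Dwyer--Kan equivalence. I would first reduce to the generating cofibrations $i_{m,n}\colon P_{m,n}\to Q_{m,n}$: compute $F$ and $NF$ explicitly on the objects $Q_{m,n}$, verify the derived unit there, and then propagate the statement along pushouts and transfinite composites using closure of Dwyer--Kan equivalences under the relevant filtered colimits. The main obstacle will be exactly this comparison of mapping spaces: the rigidification $F$ freely adjoins all formal composites, so a priori the mapping spaces of $FX$ are far larger than those of $X$, and the content of the theorem is that these adjoined composites contribute nothing homotopically once one passes to the Segal-category replacement $LX$. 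I would confront this by analyzing the mapping spaces of $FX$ directly on the generators and showing that they are weakly equivalent to $\map_{LX}(x,y)$; this is the technical heart of the argument and the step most likely to require the detailed combinatorics of the functor $F$.
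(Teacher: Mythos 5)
This statement is not proved in the paper you are working from: it is quoted from \cite[8.6]{thesis}, so your attempt has to be measured against the proof given there. Your overall skeleton does match the shape of that proof: one uses that $(F,N)$ is a Quillen pair, that the nerve $N$ preserves and reflects Dwyer--Kan equivalences because $\map_{N\mathcal C}(x,y) \cong \Map_{\mathcal C}(x,y)$ and $\Ho(N\mathcal C) \cong \pi_0\mathcal C$, and then reduces everything to showing that the derived unit $X \rightarrow N(\widehat{FX})$ is a Dwyer--Kan equivalence for every cofibrant $X$. Up to that point your proposal is sound, modulo a small conflation: showing that $N$ sends fibrant simplicial categories to fibrant objects of $\Secat_f$ is not the same as showing it preserves fibrations; the cleaner route for the Quillen-pair part is to check that $F$ sends the generating (acyclic) cofibrations $i_{m,n} \colon P_{m,n} \rightarrow Q_{m,n}$ to (acyclic) cofibrations of $\mathcal{SC}$.

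The genuine gap is the derived unit, and it is not merely an omitted computation: the mechanism you propose for it would fail as stated. You want to verify the unit on the generators $Q_{m,n}$ and then ``propagate the statement along pushouts and transfinite composites.'' But the unit compares $X$ with $NFX$, and while $F$, being a left adjoint, preserves the pushouts defining a relative $I_f$-cell complex, the nerve $N$ is a right adjoint and does not carry the resulting pushouts of simplicial categories back to pushouts (or in any obvious way to homotopy pushouts) of Segal precategories: attaching a single cell freely creates new composite morphisms involving all previously attached cells, so $NF(X' \cup_{P_{m,n}} Q_{m,n})$ is not built from $NFX'$ and $NFQ_{m,n}$ by a pushout one can induct over. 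Controlling exactly this phenomenon --- showing that the freely adjoined composites contribute nothing homotopically beyond what the Segal maps of $X$ already encode --- is the entire technical content of the theorem, and it is what the cited proof supplies through a careful analysis of $F$ on cofibrant objects, in the spirit of the Dwyer--Kan--Smith rigidification results. Your proposal names this obstacle honestly but leaves it unresolved, so what you have is a correct reduction together with a plan, not a proof.
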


\begin{cor}
There is a commutative square of Quillen equivalences
\[ \xymatrix{\Secat_f^G \ar@<.5ex>[r] \ar@<.5ex>[d] & \mathcal{SC}^G \ar@<.5ex>[l] \ar@<.5ex>[d] \\
\Secat_f^{\orbit} \ar@<.5ex>[r] \ar@<.5ex>[u] & \mathcal{SC}^{\orbit}. \ar@<.5ex>[l] \ar@<.5ex>[u]} \]
\end{cor}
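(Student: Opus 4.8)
The plan is to deduce this corollary directly from the general framework of Section 2, since all of the nontrivial work has already been carried out. The essential inputs are the Quillen equivalence $F \colon \Secat_f \rightleftarrows \mathcal{SC} \colon N$ from the preceding theorem, together with the fact that both $\Secat_f$ and $\mathcal{SC}$ satisfy the cellularity conditions of Theorem \ref{cellular}, which we verified in the two cellularity theorems of Sections 4 and 5. Given these, the asserted square is exactly the commutative square of Quillen equivalences produced just after Theorem \ref{grouporbit}, specialized to the adjoint pair $(F,N)$ and the group $G$.

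Concretely, I would proceed in three steps. First, apply Theorem \ref{qe} to the Quillen equivalence $(F,N)$ to obtain the top horizontal Quillen equivalence $F^G \colon \Secat_f^G \rightleftarrows \mathcal{SC}^G \colon N^G$ between the $G$-equivariant model structures. Second, apply Theorem \ref{grouporbit} to $\Secat_f$ and to $\mathcal{SC}$ individually to obtain the two vertical Quillen equivalences $i^* \colon \Secat_f^{\orbit} \rightleftarrows \Secat_f^G \colon i_*$ and $i^* \colon \mathcal{SC}^{\orbit} \rightleftarrows \mathcal{SC}^G \colon i_*$. Third, note that post-composition with $F$ and with $N$ supplies a Quillen equivalence $\Secat_f^{\orbit} \rightleftarrows \mathcal{SC}^{\orbit}$ for the projective model structures on orbit diagrams, giving the bottom horizontal arrows.

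The only point requiring care is that the resulting square commutes, and here I expect the main (and essentially sole) obstacle to be bookkeeping rather than any genuine difficulty. Commutativity reduces to the compatibility of the left Kan extension $i_*$ and its right adjoint $i^*$ with the functors induced objectwise by $F$ and $N$. Since all four lifted functors are assembled from the single underlying adjunction $(F,N)$ applied levelwise, the relevant diagrams of functors commute by inspection, and hence so do the induced diagrams of homotopy categories. This is precisely the compatibility underlying the general square following Theorem \ref{grouporbit}, so no argument beyond that construction is needed; it remains only to confirm that the two ways around the square agree on the nose, exactly as in the analogous squares established for the other models in Sections 3 and 4.
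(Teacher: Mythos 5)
Your proposal is correct and follows exactly the route the paper intends: the corollary is an immediate instance of the general commutative square constructed in Section 2 (combining Theorem \ref{qe} and Theorem \ref{grouporbit}), applied to the Quillen equivalence $F \colon \Secat_f \rightleftarrows \mathcal{SC} \colon N$, using the cellularity verifications for $\Secat_f$ and $\mathcal{SC}$ established in Sections 5 and 4 respectively. The paper gives no separate proof for this corollary precisely because, as you observe, all the work is already done; your three-step decomposition and the commutativity remark are a faithful unpacking of that same argument.
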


Next, we consider the inclusion functor $I$ from the category of Segal precategories to the category of simplicial spaces, which has a right adjoint $R$.

\begin{theorem} \cite[6.3]{thesis}
The adjoint pair
\[ I \colon \Secat_c \rightleftarrows \css \colon R  \]
is a Quillen equivalence.
\end{theorem}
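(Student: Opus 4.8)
The plan is to verify that $(I, R)$ is a Quillen pair and then to deduce that it is a Quillen equivalence from the derived counit criterion, using that every object of $\Secat_c$ and of $\css$ is cofibrant. For the Quillen pair, two of the required conditions are immediate: since the cofibrations in both model structures are exactly the monomorphisms, and $I$ preserves levelwise injections, the functor $I$ preserves cofibrations; and since the trivial fibrations in each structure are precisely the maps with the right lifting property against monomorphisms, an adjunction argument (using that $I$ carries monomorphisms to monomorphisms) shows that $R$ preserves trivial fibrations. The one remaining ingredient --- that $I$ preserves acyclic cofibrations, equivalently that $R$ preserves fibrations --- will follow from the identification of the two classes of weak equivalences carried out in the next step.

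Next I would show that $I$ both preserves and reflects weak equivalences; since all objects are cofibrant this says that a map $f \colon X \to Y$ of Segal precategories is a Dwyer--Kan equivalence if and only if $If$ is a weak equivalence in $\css$. Here I would use the functorial completion $L$ sending a Segal precategory to a weakly equivalent Segal category, together with the fact that the weak equivalences of $\css$ are detected by the derived mapping spaces into complete Segal spaces. Comparing these mapping spaces with the mapping-space data $\map_{LX}(x,y)$ and the homotopy categories $\Ho(LX)$, $\Ho(LY)$ identifies the $\css$-local equivalences among maps of Segal precategories with the Dwyer--Kan equivalences. This completes the verification that $(I,R)$ is a Quillen pair and supplies the reflection of weak equivalences needed below.

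The final and most substantial step is to show that for every fibrant object $W$ of $\css$, i.e.\ every complete Segal space, the derived counit is a weak equivalence; since $RW$ is already cofibrant, this is the counit $I(RW) \to W$ itself. The functor $R$ replaces the space of objects $W_0$ by its set of points and restricts the higher data accordingly, so $RW$ is a Segal category and the counit is the natural comparison map. I expect the real difficulty to lie in proving that this map is a weak equivalence in $\css$, which by the previous step reduces to checking that it is a Dwyer--Kan equivalence: one must verify that it induces weak equivalences on all mapping spaces $\map(x,y)$ and an equivalence $\Ho(RW) \to \Ho(W)$ of homotopy categories. This is exactly where completeness is indispensable, since discretizing $W_0$ discards the homotopy of the space of objects, and completeness is precisely the condition ensuring that this discarded homotopy is already recorded by the equivalences and composition in $W$, so that no homotopical information is lost. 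Granting this, the reflection of weak equivalences from the second step together with the derived counit criterion gives that $(I,R)$ is a Quillen equivalence.
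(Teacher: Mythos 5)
This statement is not proved in the paper itself---it is imported from \cite[6.3]{thesis}---but your proposal is correct and follows essentially the same route as the proof in that reference: the Quillen pair from the fact that cofibrations are monomorphisms on both sides, the identification of Dwyer--Kan equivalences of Segal precategories with $\css$-equivalences via the completion functor and Rezk's results, and the verification that the counit $I(RW) \rightarrow W$ is an equivalence for every complete Segal space $W$ (with $RW$ the Segal category of vertices of $W_0$ and fibers of $W_n \rightarrow W_0^{n+1}$). The one point to tighten is that this counit is a map of Segal \emph{spaces} whose target is not a Segal precategory, so reducing it to a Dwyer--Kan equivalence requires Rezk's theorem that Dwyer--Kan equivalences between Reedy fibrant Segal spaces coincide with $\css$-equivalences, rather than your second step as literally stated; since that is the same ingredient your second step already rests on, this is a refinement of the argument rather than a gap.
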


\begin{cor}
There is a commutative square of Quillen equivalences
\[ \xymatrix{\Secat_c^G \ar@<.5ex>[r] \ar@<.5ex>[d] & \css^G \ar@<.5ex>[l] \ar@<.5ex>[d] \\
\Secat_c^{\orbit} \ar@<.5ex>[r] \ar@<.5ex>[u] & \css^{\orbit}. \ar@<.5ex>[l] \ar@<.5ex>[u]} \]
\end{cor}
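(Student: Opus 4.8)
The plan is to recognize the desired square as the specialization of the generic commutative square of Quillen equivalences displayed after Theorem~\ref{grouporbit}, taking $\mathcal C = \Secat_c$ and $\mathcal D = \css$ and using the Quillen equivalence $I \colon \Secat_c \rightleftarrows \css \colon R$ of the preceding theorem. The essential point is that the nontrivial work, namely verifying the cellularity conditions of Theorem~\ref{cellular}, has already been carried out for both model categories: for $\Secat_c$ earlier in this section, and for $\css$ in Section~3, where the Reedy structure satisfies the conditions by Theorem~\ref{functors} and the localization $\css$ inherits them by Theorem~\ref{localization}. With both cellularity statements in hand, every arrow in the square is produced by the general machinery of Section~2.

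Concretely, I would assemble the four Quillen equivalences as follows. The top horizontal pair $I^G \colon \Secat_c^G \rightleftarrows \css^G \colon R^G$ is obtained by applying Theorem~\ref{qe} to $I \colon \Secat_c \rightleftarrows \css \colon R$, which is legitimate precisely because both sides satisfy the cellularity conditions. The two vertical pairs are instances of Theorem~\ref{grouporbit}: applied to $\Secat_c$ it yields $i^* \colon \Secat_c^{\orbit} \rightleftarrows \Secat_c^G \colon i_*$, and applied to $\css$ it yields $i^* \colon \css^{\orbit} \rightleftarrows \css^G \colon i_*$. Finally, the bottom horizontal pair $I^{\orbit} \colon \Secat_c^{\orbit} \rightleftarrows \css^{\orbit} \colon R^{\orbit}$ is obtained by applying $I$ and $R$ objectwise to functors out of $\mathcal O_G^{op}$; since these diagram categories carry the projective model structures, with weak equivalences and fibrations defined levelwise, the objectwise application of a Quillen equivalence is again a Quillen equivalence by the standard fact about projective structures.

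It then remains to check that the square commutes, and the cleanest statement is for the right adjoints. The vertical maps $i^* \colon \css^{\orbit} \to \css^G$ and $i^* \colon \Secat_c^{\orbit} \to \Secat_c^G$ are restriction along $i \colon G \to \mathcal O_G^{op}$, that is, precomposition with $i$, whereas the horizontal maps $R^G$ and $R^{\orbit}$ are postcomposition with the single right adjoint $R$. Since precomposition and postcomposition of functors commute on the nose, we obtain the strict identity $R^G \circ i^* = i^* \circ R^{\orbit}$, and the corresponding square of left adjoints then commutes up to natural isomorphism by passing to mates, equivalently because the left Kan extension $i_*$ commutes with the colimit-preserving left adjoint $I$. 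I do not expect a genuine obstacle here: all the substantive content lives in the already-established cellularity of $\Secat_c$ and $\css$, and the only point requiring a modicum of care is confirming that applying a Quillen equivalence objectwise to the orbit-diagram categories again yields a Quillen equivalence, which is standard for the projective model structures in play.
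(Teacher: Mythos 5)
Your proposal is correct and follows exactly the route the paper intends: the corollary is stated without a separate proof precisely because it is the specialization of the generic commutative square from Section~2 (Theorem~\ref{qe} plus Theorem~\ref{grouporbit}) to the Quillen equivalence $I \colon \Secat_c \rightleftarrows \css \colon R$, using the cellularity of $\Secat_c$ established in Section~5 and of $\css$ from Theorems~\ref{functors} and~\ref{localization}. Your additional remarks on the levelwise bottom equivalence and commutativity via mates simply make explicit what the paper leaves implicit in that general square.
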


Similarly to the comparison between complete Segal spaces and quasi-categories, Joyal and Tierney prove that
there are also two different Quillen equivalences directly between $\qcat$ and $\Secat_c$.  The first of these functors is analogous
to the pair given in Theorem \ref{qcat1}; the functor $j^* \colon \Secat_c \rightarrow \qcat$ assigns to a Segal
precategory $X$ the simplicial set $X_{*0}$.  Its left adjoint is
denoted $j_!$.

\begin{theorem} \cite[5.6]{jt}
The adjoint pair
\[ j_! \colon \qcat \rightleftarrows \Secat_c \colon j^* \]
is a Quillen equivalence.
\end{theorem}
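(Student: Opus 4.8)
The functor $j^*$ is given by the same formula as the functor $p^*$ of Theorem \ref{qcat1}: it sends a Segal precategory $X$ to the simplicial set $X_{*,0}$. Writing $I \colon \Secat_c \to \css$ for the inclusion of Segal precategories into all simplicial spaces, we have $j^* = p^* \circ I$ on the nose, since $I$ leaves the underlying bisimplicial set unchanged and $p^*(W) = W_{*,0}$. Since the paper has already established that $I \colon \Secat_c \rightleftarrows \css \colon R$ is a Quillen equivalence \cite[6.3]{thesis} and that $p^* \colon \css \rightleftarrows \qcat \colon p_!$ is a Quillen equivalence (Theorem \ref{qcat1}), the plan is to deduce that $(j_!, j^*)$ is a Quillen equivalence from these two, rather than re-analyzing the Segal category model structure from scratch.

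First I would check that $(j_!, j^*)$ is a Quillen pair. Both $\qcat$ and $\Secat_c$ have the monomorphisms as cofibrations, so it is cleanest to verify that the right adjoint $j^*$ preserves fibrations and acyclic fibrations; the essential observation is that if $X$ is a Reedy-fibrant Segal category, i.e.\ a fibrant object of $\Secat_c$, then its zeroth row $X_{*,0}$ is a quasi-category, and more generally that $j^*$ carries the fibrations and trivial fibrations of $\Secat_c$ into those of $\qcat$. Equivalently, one may examine $j_!$ on the generating cofibrations $\partial\Delta[n] \to \Delta[n]$ of $\qcat$ and confirm that their images are cofibrations, and likewise for the generating acyclic cofibrations.

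Then I would establish the Quillen equivalence by passing to homotopy categories. By Theorem \ref{qcat1} and \cite[6.3]{thesis}, the total derived functors $\mathbf{R}p^*$ and $\mathbf{L}I$ are equivalences, so their composite is an equivalence $\Ho(\Secat_c) \to \Ho(\qcat)$, and it remains to identify this composite with $\mathbf{R}j^*$. The main obstacle lies precisely here: although $j^* = p^* \circ I$ as functors, $I$ is a \emph{left} Quillen functor while $p^*$ is a \emph{right} Quillen functor, so their derived functors involve opposite replacements, and one must verify that fibrantly replacing a Segal category $X$ inside $\css$ before extracting the zeroth row yields the same object of $\Ho(\qcat)$ as taking $X_{*,0}$ directly; concretely, that $p^*$ sends the fibrant replacement map $I(X) \to \widehat{I(X)}$ in $\css$ to a weak equivalence in $\qcat$. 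Granting this compatibility, $\mathbf{R}j^* \cong \mathbf{R}p^* \circ \mathbf{L}I$ is an equivalence, and combined with the first step the pair $(j_!, j^*)$ is a Quillen equivalence. As a fallback that sidesteps the replacement issue, one can instead argue directly via the standard criterion, showing that $j^*$ reflects weak equivalences between fibrant objects and that for every quasi-category $K$ the derived unit $K \to j^*(\widehat{j_! K})$ is a weak equivalence; this route is more self-contained but requires a hands-on description of $j_!$ on simplices.
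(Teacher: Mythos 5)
The paper does not actually prove this statement: it is imported verbatim from Joyal--Tierney \cite[5.6]{jt}, so there is no internal argument to compare against, and your proposal should be judged as a self-contained derivation from the other equivalences quoted in the paper. As such a derivation, it has a genuine gap, sitting exactly at the step you flag and then ``grant.'' The factorization $j^* = p^* \circ I$ mixes variances: $I$ is left Quillen and $p^*$ is right Quillen, so nothing formal identifies $\mathbf{R}j^*$ with $\mathbf{R}p^* \circ \mathbf{L}I$. Unwinding what is needed: for $X$ a Reedy fibrant Segal category, one must show that the fibrant replacement (completion) $I(X) \to \widehat{I(X)}$ in $\css$ induces a weak categorical equivalence $X_{*,0} \to \widehat{I(X)}_{*,0}$. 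Since a Reedy fibrant Segal category is a Segal space but generally \emph{not} complete, $I(X)$ is not fibrant in $\css$, so this cannot be extracted from the right-Quillenness of $p^*$ via Ken Brown's lemma; proving it is essentially the substance of Joyal--Tierney's theorem, so granting it begs the question. Your fallback (verify the Quillen-equivalence criterion by hand) is not a reduction at all, just a restatement of what must be proved.

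The strategy can, however, be repaired by factoring on the side where the variances match. First compute the left adjoints explicitly: a map of simplicial spaces $K^t \to W$ is, in level $n$, a function $K_n \to W_{n,0}$ natural in $n$, so $p_!K = K^t$; since $(K^t)_0$ is discrete, $p_!$ lands in Segal precategories, and the same adjunction calculation (using that Segal precategories form a full subcategory of simplicial spaces) gives $j_! = (-)^t$ and hence $p_! = I \circ j_!$ on the nose, equivalently $p^* = j^* \circ R$. Now argue as follows. (a) $(j_!, j^*)$ is a Quillen pair: $j_!$ preserves monomorphisms, which are the cofibrations on both sides; and if $K \to L$ is an acyclic cofibration in $\qcat$, then $I j_! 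K = p_! K \to p_! L = I j_! L$ is a weak equivalence in $\css$ because $p_!$ is left Quillen, and since every object of $\Secat_c$ is cofibrant and a left Quillen equivalence reflects weak equivalences between cofibrant objects, $j_! K \to j_! L$ is a Dwyer--Kan equivalence, hence an acyclic cofibration in $\Secat_c$. (b) Total left derived functors of composites of left Quillen functors compose, so $\mathbf{L}p_! \cong \mathbf{L}I \circ \mathbf{L}j_!$ as functors $\Ho(\qcat) \to \Ho(\css)$; by Theorem \ref{qcat1} and \cite[6.3]{thesis} both $\mathbf{L}p_!$ and $\mathbf{L}I$ are equivalences, so $\mathbf{L}j_!$ is an equivalence by two-out-of-three, and a Quillen pair whose derived left adjoint is an equivalence of homotopy categories is a Quillen equivalence. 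This completes your intended reduction using only results already quoted in the paper together with standard model category theory, and it avoids the fibrant-replacement compatibility entirely.
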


The second Quillen equivalence between these two model categories
is given by the map $d^* \colon \Secat_c \rightarrow \qcat$, which
assigns to a Segal precategory its diagonal, and its right adjoint
$d_*$.

\begin{theorem} \cite[5.7]{jt}
The adjoint pair
\[ d^* \colon \Secat_c \rightleftarrows \qcat \colon d_* \]
is a Quillen equivalence.
\end{theorem}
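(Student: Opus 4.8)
Since the final statement is the second Joyal--Tierney equivalence, strictly speaking one only needs to invoke \cite[5.7]{jt}; it is recorded here as the non-equivariant input for the equivariant comparison that follows. If I were to reprove it, the plan would be to leverage the Quillen equivalence $j_! \dashv j^*$ of \cite[5.6]{jt} rather than to argue from scratch, since $j^*$ and $d^*$ are both functors $\Secat_c \to \qcat$ (the first row and the diagonal, respectively). The argument then splits into two parts: first show that $(d^*, d_*)$ is a Quillen adjunction, and then upgrade it to a Quillen equivalence by comparing the derived diagonal functor with the already-understood derived first-row functor.

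For the Quillen adjunction, I would check that the diagonal $d^*$ is left Quillen. Cofibrations in both $\Secat_c$ and $\qcat$ are the monomorphisms, and the diagonal of a levelwise injection of bisimplicial sets is again injective, so $d^*$ preserves cofibrations. To see that $d^*$ preserves acyclic cofibrations it suffices, by cofibrant generation, to check the generating acyclic cofibrations; equivalently one verifies that the right adjoint $d_*$ preserves fibrations between fibrant objects. This is the more delicate preservation check, and it is where the Segal-category structure---the discreteness of $X_0$ together with the Segal condition---actually enters.

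For the equivalence itself I would use that in $\Secat_c$ cofibrations are monomorphisms, so \emph{every} object is cofibrant and $\mathbb{L}d^* X = d^* X$ with no cofibrant replacement needed. The natural vertex inclusion $[0] \to [n]$ induces, for a fibrant replacement $\hat X$ of $X$, a comparison map $d^* \hat X \to j^* \hat X$ relating $\hat X_{n,n}$ with $\hat X_{n,0}$. The key step is to prove that this comparison is a weak equivalence in $\qcat$ whenever $\hat X$ is a Reedy fibrant Segal category. Granting this, and using that $d^*$ preserves the weak equivalence $X \to \hat X$, one gets $\mathbb{L}d^* X = d^* X \simeq d^* \hat X \simeq j^* \hat X = \mathbb{R}j^* X$ naturally, so $\mathbb{L}d^*$ and $\mathbb{R}j^*$ agree as functors $\Ho(\Secat_c) \to \Ho(\qcat)$. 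Since $j_! \dashv j^*$ is a Quillen equivalence, $\mathbb{R}j^*$ is an equivalence of homotopy categories, hence so is $\mathbb{L}d^*$, which forces $(d^*, d_*)$ to be a Quillen equivalence. The main obstacle is precisely this diagonal-to-first-row comparison: establishing that it is a levelwise weak equivalence for fibrant Segal categories rests on the homotopical behavior of the Segal maps and the degree-zero discreteness, and it is the technical heart of the argument.

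Finally, within the present paper the substantive content is equivariant: once this theorem is available, the cellularity conditions already verified for $\Secat_c$ and for $\qcat$ allow us to invoke Theorem~\ref{qe} to lift $(d^*, d_*)$ to a Quillen equivalence $\Secat_c^G \rightleftarrows \qcat^G$, and Theorem~\ref{grouporbit} to identify each side with its orbit-diagram model, yielding the expected commuting square of Quillen equivalences, with commutativity immediate from the naturality of $i^*$ and $i_*$.
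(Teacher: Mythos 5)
The paper contains no proof of this statement: it is quoted directly from Joyal and Tierney \cite[5.7]{jt}, so your opening observation that invoking the citation suffices is exactly what the paper does, and your closing paragraph reproduces the paper's equivariant corollary. Your sketch of how one would reprove it is also essentially the Joyal--Tierney route (deduce the diagonal equivalence from the first-row equivalence of \cite[5.6]{jt} by identifying the two derived functors), but one step is wrong as written: the vertex inclusions $[0] \rightarrow [n]$ do \emph{not} induce a simplicial map $d^*\hat{X} \rightarrow j^*\hat{X}$, because a choice of vertex (say the $0$th) is not natural with respect to all simplicial operators --- naturality already fails against the face $d^0$, which does not preserve the $0$th vertex. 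The comparison map that does exist goes the other way: the unique maps $[n] \rightarrow [0]$ induce natural maps $\hat{X}_{n,0} \rightarrow \hat{X}_{n,n}$, hence a natural inclusion $j^*\hat{X} \rightarrow d^*\hat{X}$ of the first row into the diagonal, and the technical heart is showing this map is a weak categorical equivalence when $\hat{X}$ is a fibrant Segal category. With the direction corrected, your argument closes as you say: since every object of $\Secat_c$ is cofibrant, a natural weak equivalence connecting $j^*$ and $d^*$ on fibrant objects identifies $\mathbb{L}d^*$ with $\mathbb{R}j^*$ as functors $\Ho(\Secat_c) \rightarrow \Ho(\qcat)$, the latter is an equivalence by \cite[5.6]{jt}, and a Quillen adjunction whose derived left adjoint is an equivalence of homotopy categories is a Quillen equivalence.
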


\begin{cor}
There are two commutative squares of Quillen equivalences
\[ \xymatrix{\qcat^G \ar@<.5ex>[r] \ar@<.5ex>[d] & \Secat_c^G \ar@<.5ex>[l] \ar@<.5ex>[d] & \qcat^G \ar@<-.5ex>[r] \ar@<.5ex>[d] & \Secat_c^G \ar@<-.5ex>[l] \ar@<.5ex>[d] \\
\qcat^{\orbit} \ar@<.5ex>[u] \ar@<.5ex>[r] & \Secat_c^{\orbit} \ar@<.5ex>[l] \ar@<.5ex>[u] & \qcat^{\orbit} \ar@<.5ex>[u] \ar@<-.5ex>[r] & \Secat_c^{\orbit}. \ar@<-.5ex>[l] \ar@<.5ex>[u]} \]
\end{cor}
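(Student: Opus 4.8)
The plan is to apply the general machinery of this section once more, now to each of the two Joyal--Tierney Quillen equivalences between $\qcat$ and $\Secat_c$ recalled above, namely the adjoint pairs $j_! \colon \qcat \rightleftarrows \Secat_c \colon j^*$ and $d^* \colon \Secat_c \rightleftarrows \qcat \colon d_*$. The essential input is already in place: $\qcat$ satisfies the cellularity conditions of Theorem \ref{cellular} by Theorem \ref{functors}, and $\Secat_c$ satisfies them as established earlier in this section. Thus both columns of each square, as well as the top rows, are governed by the machinery we have already set up.

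First I would apply Theorem \ref{qe} separately to each of the two Quillen equivalences. Since both $\qcat$ and $\Secat_c$ meet the cellularity conditions, Theorem \ref{qe} produces induced Quillen equivalences $\qcat^G \rightleftarrows \Secat_c^G$ on the $G$-equivariant model categories, one for each of the two pairs above; these furnish the two top rows of the two squares. Next I would invoke Theorem \ref{grouporbit}, applied to $\qcat$ and to $\Secat_c$ individually, to obtain the Quillen equivalences $i^* \colon \qcat^{\orbit} \rightleftarrows \qcat^G \colon i_*$ and $i^* \colon \Secat_c^{\orbit} \rightleftarrows \Secat_c^G \colon i_*$, which supply the two vertical columns common to both squares.

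Commutativity of each square is then exactly the commutative diagram of Quillen equivalences displayed immediately after Theorem \ref{grouporbit}, instantiated at the Quillen equivalence $\qcat \rightleftarrows \Secat_c$ in each of its two forms and at the group $G$. There is essentially no obstacle to overcome: the entire content lies in the verification of the cellularity conditions for both $\qcat$ and $\Secat_c$, which was carried out earlier, so the corollary is a formal consequence of Theorems \ref{qe} and \ref{grouporbit} applied twice. The only point requiring a little care is bookkeeping the directions of the adjoints in the second square, where $d^*$ is the left adjoint running $\Secat_c \rightarrow \qcat$, so that the induced horizontal equivalence is oriented oppositely to that of the first square, exactly as reflected by the arrow directions in the displayed diagram.
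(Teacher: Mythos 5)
Your proposal is correct and follows exactly the paper's (implicit) argument: the corollary is stated without separate proof precisely because it is the general commutative square of Section 2 --- obtained by combining Theorem \ref{qe} and Theorem \ref{grouporbit} --- instantiated at each of the two Joyal--Tierney equivalences, using that $\qcat$ satisfies the cellularity conditions via Theorem \ref{functors} and that $\Secat_c$ does by the theorem proved earlier in this section. Your remark about tracking the direction of the adjoints in the $d^*\dashv d_*$ square matches the reversed arrows in the displayed diagram.
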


\section{Actions of simplicial groups}

In this section, we consider the more general case of actions by any simplicial group $G$.  Given a model category $\mathcal C$, to make sense of the category $\mathcal C^G$, we need to require that $\mathcal C$ have the structure of a simplicial model category and consider simplicial functors $G \rightarrow \mathcal C$ where $G$ is regarded as a simplicial category with one object.

Stephan proves the following result for the topological case, but his proof holds in the simplicial case as well.  He restricts to the case of compact Lie groups, partially because they are the case of most interest in equivariant homotopy theory, but also because they satisfy important cofibrancy conditions.  In particular, if $G$ is a compact Lie group, then the spaces $G/H$ and $(G/H)^K$ are CW complexes for any closed subgroups $H$ and $K$ of $G$.  In our case, the analogous statement is that the simplicial sets $G/H$ and $(G/H)^K$ are cofibrant, which holds automatically since all simplicial sets are cofibrant.

\begin{prop} \label{simplicial}
Suppose that $G$ is a simplicial group and $\mathcal C$ is a cofibrantly generated simplicial model category.  Then we have the following.
\begin{enumerate}
\item The category of $G$-objects in $\mathcal C$ admits the $G$-model structure if the conditions of Theorem \ref{cellular} hold.  In this case, $\mathcal C^G$ is a simplicial model category.

\item The orbit category model category $\mathcal C^{\mathcal O^{op}_G}$ exists and is a simplicial model category.

\item There is a Quillen equivalence $\mathcal C^{\mathcal O^{op}_G} \rightleftarrows \mathcal C^G$.
\end{enumerate}
\end{prop}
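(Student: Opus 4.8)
The plan is to transpose Stephan's topological argument to the simplicial setting, replacing topological mapping spaces and (co)tensors by their simplicial analogues and using that every simplicial set is cofibrant in place of the CW hypotheses he imposes for compact Lie groups. Throughout, $G$ is regarded as a simplicial category with a single object, a simplicial subgroup $H$ determines an orbit $G/H$ which is now a simplicial set carrying a $G$-action, and the fixed point functor $(-)^H \colon \mathcal C^G \to \mathcal C$ is defined through the simplicial cotensor so that it remains right adjoint to $G/H \otimes -$. With this dictionary, statements (1)--(3) are exactly the simplicial analogues of Theorems \ref{cellular} and \ref{grouporbit}, together with the extra assertion that the model categories in play are simplicial.

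For part (1), I would first produce the underlying $G$-model structure just as in the discrete case: since $\mathcal C$ is cofibrantly generated and the cellularity conditions are assumed, the sets $\{G/H \otimes f\}$, with $f$ ranging over the generating (acyclic) cofibrations of $\mathcal C$ and $H$ over the subgroups of $G$, generate a model structure on $\mathcal C^G$ by the standard transfer theorem, the cellularity conditions being precisely what ensure that the fixed point functors detect the weak equivalences and fibrations and interact correctly with the generating cells. It then remains to see this structure is simplicial. The tensor, cotensor, and mapping spaces on $\mathcal C^G$ are inherited objectwise from $\mathcal C$, and I would verify the pushout-product axiom by testing against the fixed point functors and reducing it, via the adjunctions, to the pushout-product axiom already available in the simplicial model category $\mathcal C$.

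For part (2), the orbit category $\mathcal O_G$ is now itself a simplicial category, with objects the orbits $G/H$ and morphism simplicial sets the $G$-equivariant maps, and $\mathcal C^{\mathcal O_G^{op}}$ denotes the category of enriched functors. Its projective model structure exists by the general existence result for projective structures on diagram categories valued in a cofibrantly generated simplicial model category, the generating (acyclic) cofibrations being obtained by tensoring those of $\mathcal C$ with the representables $\mathcal O_G(-, G/H)$; the same general theory shows that this projective structure is simplicial.

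For part (3), the functor $i \colon G \to \mathcal O_G^{op}$ sending the object of $G$ to the free orbit $G/\{e\}$ induces a restriction functor $i^*$ whose left adjoint $i_*$ is enriched left Kan extension. That $(i_*, i^*)$ is a Quillen pair is routine, since the fibrations and weak equivalences of the projective structure are defined objectwise while those of $\mathcal C^G$ are defined by the fixed point functors, and these descriptions agree along $i$. The substance is the Quillen equivalence, for which I would argue as in Elmendorf's theorem: the key point is that evaluating an orbit diagram at $G/H$ should recover the $H$-fixed points of the associated $G$-object, and, using cellularity condition \eqref{cell1} to commute the fixed point functors past the filtered colimits that assemble cofibrant objects, a cell induction shows the derived unit and counit are weak equivalences. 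The main obstacle is exactly this fixed-point comparison on cofibrant diagrams, and it is here that the automatic cofibrancy of the simplicial sets $G/H$ and $(G/H)^K$, noted just before the statement, takes over the role that the CW structure of orbits plays in Stephan's compact Lie group argument, allowing his proof to go through essentially unchanged.
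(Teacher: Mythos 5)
Your overall strategy coincides with the paper's: the paper's entire proof of this proposition is the remark preceding it, namely that Stephan's topological argument carries over verbatim to the simplicial setting, with the automatic cofibrancy of the simplicial sets $G/H$ and $(G/H)^K$ standing in for the CW-complex conditions that hold for compact Lie groups. Your sketches of parts (1) and (2) fill in details that the paper delegates to Stephan, and they are correct in outline.

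In part (3), however, you have the adjunction backwards, and as stated that step would fail. You take the Quillen pair to be $(i_*, i^*)$ with the enriched left Kan extension as the left Quillen functor and restriction $i^*$ as the right Quillen functor, justified by saying the objectwise and fixed-point descriptions of fibrations ``agree along $i$.'' They do not: a projective fibration is only an objectwise fibration, and evaluating at $G/\{e\}$ gives no control over the fixed-point objects $(-)^H$ that define fibrations in $\mathcal C^G$, so $i^*$ does not preserve fibrations and is not right Quillen. Worse, the left Kan extension along $i$ is essentially trivial at every non-free orbit (for $H$ nontrivial there are no $G$-maps $G/H \rightarrow G/\{e\}$), so that adjunction could never be a Quillen equivalence. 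The Elmendorf--Stephan equivalence uses the other adjunction: $i^*$ is the \emph{left} Quillen functor (it carries the generating projective cofibrations $\mathcal O_G(-, G/H) \otimes f$ to the generating cofibrations $G/H \otimes f$ of $\mathcal C^G$), and its right adjoint is the fixed-point-diagram functor, i.e., the enriched \emph{right} Kan extension $X \mapsto (G/H \mapsto X^H)$. Note that your ``substance'' paragraph --- that a cofibrant orbit diagram evaluated at $G/H$ computes the $H$-fixed points of its value at $G/\{e\}$, proved by cell induction using the cofibrancy of the orbits --- is precisely the derived unit/counit statement for this correct adjunction, so the repair consists only of renaming the adjoints. (In fairness, the paper's own Section 2 contains the same slip, describing $i_*$ as a left adjoint given by left Kan extension; in Stephan's theorem the equivalence is stated with $i^*$ as the left Quillen functor.)
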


Since $\css$ is known to be a simplicial model category \cite[7.2]{rezk}, we have the following.

\begin{cor}
The simplicial model categories $\css^G$ and $\css^{\mathcal O^{op}_G}$ exist and there is a Quillen equivalence
\[ i^* \colon \css^{\orbit} \rightleftarrows \css^G \colon i_*. \]
\end{cor}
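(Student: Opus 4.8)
The plan is to obtain this corollary by applying Proposition \ref{simplicial} to the model category $\mathcal C = \css$, so that the work lies entirely in verifying the two hypotheses of that proposition: that $\css$ is a cofibrantly generated simplicial model category, and that it satisfies the cellularity conditions of Theorem \ref{cellular}, now interpreted for the simplicial group $G$ and its simplicial subgroups. Once both are in place, the three conclusions of Proposition \ref{simplicial} translate directly into the three assertions of the corollary: the existence of the simplicial model category $\css^G$, the existence of the simplicial model category $\css^{\orbit}$, and the Quillen equivalence $i^\ast \colon \css^{\orbit} \rightleftarrows \css^G \colon i_\ast$ between them.

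The first hypothesis is immediate from Rezk's work: Theorem \ref{CSS} records that $\css$ is a cofibrantly generated model structure obtained as a left Bousfield localization of the Reedy structure, and \cite[7.2]{rezk} further provides the compatible simplicial enrichment, so $\css$ is a simplicial model category. For the second hypothesis, I would observe that the cellularity conditions for $\css$ were already established in the discrete case in Section 3, via Theorem \ref{functors} together with Theorem \ref{localization}: viewing a simplicial space as a functor $\Deltaop \times \Deltaop \rightarrow \Sets$ with monomorphism cofibrations places the Reedy (equivalently injective) structure in the scope of Theorem \ref{functors}, and Theorem \ref{localization} then carries the cellularity conditions through the localization to $\css$ itself. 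The key point is that these verifications are structural, using only the presheaf shape of the objects, the fact that cofibrations are monomorphisms, and that $\css$ is a localization; they make no use of $G$ being discrete, so the same arguments apply verbatim.

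I expect the main obstacle to be confirming that the cellularity conditions remain meaningful and still hold once $G$ is a simplicial group, where the subgroups $H, K$ are simplicial subgroups, the orbits $G/H$ are simplicial sets, and the tensor $G/H \otimes A$ is formed using the simplicial tensoring of $\css$ rather than a coproduct over a discrete set. In particular, condition \eqref{cell3}, asking that the induced map $(G/H)^K \otimes A \rightarrow (G/H \otimes A)^K$ be an isomorphism, must be re-read with these simplicial tensors; since everything in $\css$ is computed levelwise over $\Deltaop \times \Deltaop$ and the relevant filtered colimits and finite fixed-point limits commute levelwise in $\Sets$, this isomorphism should reduce to the degreewise statement already handled non-equivariantly. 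Granting this reinterpretation, Proposition \ref{simplicial} applies and yields the corollary.
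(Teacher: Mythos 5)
Your proposal is correct and follows essentially the same route as the paper: the paper's proof simply cites Rezk's result that $\css$ is a simplicial model category, notes that the cellularity conditions were already verified in Section 3 (via Theorems \ref{functors} and \ref{localization}), and invokes Proposition \ref{simplicial}. Your additional care about re-reading the cellularity conditions for simplicial subgroups and simplicial tensors is a reasonable elaboration of a point the paper leaves implicit, but it does not change the structure of the argument.
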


\begin{proof}
We have already proved that $\css$ satisfies the cellularity conditions.  Therefore, the result follows immediately from Proposition \ref{simplicial}.
\end{proof}

However, we can also consider $\Secat_c$; we include here a proof that it is a simplicial model category.

\begin{prop}
The model category $\Secat_c$ has the structure of a simplicial model category.
\end{prop}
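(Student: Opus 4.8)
The plan is to equip the category of Segal precategories with a simplicial structure and then check the pushout-product axiom (SM7). The only thing preventing us from simply inheriting the simplicial structure of the Reedy model category $\SSets^{\Deltaop}$ of simplicial spaces is that the naive tensor destroys discreteness in degree zero: for a Segal precategory $X$ and a simplicial set $K$ one has $(X \otimes K)_0 = X_0 \times K$, which is discrete only when $K$ is. I would therefore define the tensor by $X \otimes K := (IX \otimes K)_r$, where the inner tensor is that of simplicial spaces, $I$ is the inclusion into $\SSets^{\Deltaop}$, and $(-)_r$ is the reduction functor left adjoint to $I$. The cotensor and mapping space can be inherited directly: since each $\Delta[m]$ is connected, $[K,Y]_0 = \Map(K,Y_0)$ is the constant simplicial set on $\Hom_{\Sets}(\pi_0 K, Y_0)$, hence discrete, so $[K,Y]$ is again a Segal precategory, and we set $\Map(X,Y) := \Map(IX, IY)$. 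The required adjunction isomorphisms among $\otimes$, $\Map$, and $[-,-]$ then follow formally from $(-)_r \dashv I$ together with the full faithfulness of $I$.

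To verify SM7 I would first record that, because $(-)_r$ preserves pushouts, the pushout-product of a cofibration $i$ in $\Secat_c$ with a cofibration $j$ of simplicial sets satisfies $i \square j = (Ii \square j)_r$, the inner pushout-product being taken in simplicial spaces. For the cofibration clause: $I$ carries monomorphisms to monomorphisms, so $Ii$ is a Reedy cofibration; since the Reedy structure is a simplicial model category, $Ii \square j$ is again a Reedy cofibration; and $(-)_r$ preserves cofibrations because it sends each generating Reedy cofibration $\partial \Delta[m] \times \Delta[n]^t \cup \Delta[m] \times \partial \Delta[n]^t \to \Delta[m] \times \Delta[n]^t$ to a generating cofibration of $\Secat_c$ and commutes with the colimits out of which all cofibrations are built. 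Hence $i \square j$ is a cofibration.

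The substantive point, and the step I expect to be the main obstacle, is the acyclicity clause: that $i \square j$ is a weak equivalence whenever $i$ or $j$ is. By the usual reduction for cofibrantly generated model categories it suffices to take $i$ a generating cofibration of $\Secat_c$ and $j$ a generating anodyne inclusion $V[p,k] \to \Delta[p]$. The difficulty is that the weak equivalences of $\Secat_c$ are Dwyer-Kan equivalences rather than levelwise equivalences, so acyclicity does not pass formally through $(-)_r$ from the Reedy structure. I would resolve this in one of two ways. The first is to prove that $(-)_r$ is a left Quillen functor into $\Secat_c$ --- equivalently that its right adjoint $I$ is right Quillen --- whence $(-)_r$ carries the Reedy acyclic cofibration $Ii \square j$ to an acyclic cofibration of $\Secat_c$. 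The second, and likely cleaner, is to use the dual form of SM7: for a cofibration $i$ and a fibration $p \colon X \to Y$ in $\Secat_c$, the induced map $\Map(B,X) \to \Map(A,X) \times_{\Map(A,Y)} \Map(B,Y)$ is computed on the inherited mapping spaces, hence coincides with the analogous map in $\SSets^{\Deltaop}$, which is an acyclic fibration by SM7 for the Reedy structure, provided one knows that $I$ carries fibrations of $\Secat_c$ to Reedy fibrations. Either route isolates the same essential content --- the compatibility of the reduction functor with Dwyer-Kan equivalences --- while everything else reduces to the known simplicial structure on simplicial spaces.
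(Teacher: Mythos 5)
Your construction of the simplicial structure (tensor $X \otimes K = (IX \times K)_r$, inherited cotensor and mapping space, (SM6) via $(-)_r \dashv I$) is identical to the paper's, and your second route for (SM7) is in fact the paper's own argument: the mapping-space corner map coincides with the Reedy one, cofibrations agree (both are the monomorphisms), so everything reduces to the lemma you flag, namely that $I$ carries fibrations of $\Secat_c$ to Reedy fibrations. The paper proves that lemma as follows: if $A \to B$ is a Reedy acyclic cofibration of simplicial spaces, then $\pi_0 A_0 \to \pi_0 B_0$ is a bijection, whence $A_r \to B_r$ is still a monomorphism and a levelwise weak equivalence, hence an acyclic cofibration in $\Secat_c$; any lifting problem from $A \to B$ into a map of Segal precategories factors through the reductions, so a $\Secat_c$-fibration lifts against all Reedy acyclic cofibrations.

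However, two of your claims are wrong, and the gap you yourself single out is never closed. First, $(-)_r$ does \emph{not} send every generating Reedy cofibration to a generating cofibration of $\Secat_c$: for $m=1$, $n=0$ the generator is the map of constant simplicial spaces $\partial\Delta[1] \to \Delta[1]$, whose reduction is $\{0,1\} \to \ast$, not a monomorphism---this is precisely why the set $I_c$ is indexed only by $n \geq 1$ together with $m=n=0$. So $(-)_r$ does not preserve cofibrations, it is not left Quillen (your first route is unworkable), and the pushout-product cofibration clause genuinely fails for this tensor: the pushout-product of the generating cofibration $\emptyset \to \ast$ with $\partial\Delta[1] \to \Delta[1]$ is again $\{0,1\} \to \ast$. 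Second, and more seriously, your second route does not settle the acyclicity clause either: Reedy (SM7) makes the corner map acyclic only when $i$ or $p$ is a \emph{levelwise} weak equivalence, whereas the axiom demands it for Dwyer-Kan equivalences. This cannot be repaired by any argument passing through the Reedy structure, because $\Map(\ast, X) = X_0$ is the discrete set of objects of $X$, which is not invariant under Dwyer-Kan equivalence. Concretely, let $E^t$ be the discrete nerve of the groupoid with two objects and a single isomorphism between them: it is a Reedy fibrant Segal category, hence fibrant in $\Secat_c$ by the characterization quoted in the paper, and $p \colon E^t \to \ast$ is a Dwyer-Kan equivalence, hence an acyclic fibration; yet the corner map associated to the cofibration $i \colon \emptyset \to \ast$ and this $p$ is $\Map(\ast, E^t) \to \Map(\ast,\ast)$, i.e.\ $\{x,y\} \to \ast$, which is not a weak equivalence. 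So the ``essential content'' you defer is not a routine compatibility: with the structure as defined (by you and by the paper, whose write-up likewise verifies only preservation of cofibrations and fibrations and is silent on this clause), this half of (SM7) is not just unproved but false, and any correct proof of the proposition would have to use a genuinely different simplicial structure.
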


\begin{proof}
We need to show that the axioms (SM6) and (SM7) for a simplicial model category hold \cite[9.1.6]{hirsch}.  We begin with (SM6).  Suppose that $X$ and $Y$ are Segal precategories and $K$ is a simplicial set.  Recall that the left adjoint to the inclusion functor from Segal precategories to simplicial spaces is the reduction functor $(-)_r$.  Define $X \otimes K = (X \times K)_r$, where the product is taken in simplicial spaces.  Then define $\Map(X,Y)$ and $Y^K$ just as for simplicial spaces; for the latter, observe that if $Y_0$ is discrete, then so is $(Y^K)_0 = (Y_0)^K$.  Then one can verify the necessary isomorphisms to verify (SM6).

To check axiom (SM7), we need to show that if $i \colon A \rightarrow B$ is a cofibration and $p \colon X \rightarrow Y$ is a fibration in $\Secat_c$, then the pullback-corner map
\[ \Map(B,X) \rightarrow \Map(A,X) \times_{\Map(A,Y)} \Map(B,Y) \]
is a fibration of simplicial sets which is a weak equivalence if either $i$ or $p$ is.  Since we have defined mapping spaces to be the same as in the category of simplicial spaces, we need only verify that a cofibration or fibration in $\Secat_c$ is still a cofibration or fibration, respectively, in the Reedy model structure on simplicial spaces.  Since cofibrations are exactly the monomorphisms in both categories, the case of cofibrations is immediate.

Suppose, then, that $p$ is a fibration in $\Secat_c$, so that is has the right lifting property with respect to monomorphisms between Segal precategories which are also Dwyer-Kan equivalences.  In particular, it has the right lifting property with respect to monomorphisms which are levelwise weak equivalences of simplicial sets.  Suppose that $A \rightarrow B$ is an acyclic cofibration in the Reedy model structure.  Then $\pi_0(A_0) \cong \pi_0(B_0)$, so $(A_r)_0 \cong (B_r)_0$.  Therefore the map $A_r \rightarrow B_r$ is still a levelwise weak equivalence and monomorphism, and in particular a weak equivalence in $\Secat_c$.  Therefore we obtain a lifting
\[ \xymatrix{A \ar[r] \ar[d]_\simeq & A_r \ar[r] \ar[d] & X \ar[d]^p \\
B \ar[r] & B_r \ar@{-->}[ur] \ar[r] & Y.} \]
Therefore, $p$ is also a fibration in the Reedy model structure on simplicial spaces.  It follows that $\Secat_c$ satisfies axiom (SM7) and is a simplicial model category.
\end{proof}

Since we have already verified that $\Secat_c$ satisfies the cellularity conditions, we have the following result.

\begin{cor}
Let $G$ be a simplicial group.  The simplicial model categories $\Secat_c^G$ and $\Secat_c^{\mathcal O^{op}_G}$ exist and there is a Quillen equivalence
\[ i^* \colon \Secat_c^{\orbit} \rightleftarrows \Secat_c^G \colon i_*. \]
\end{cor}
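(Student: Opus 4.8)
The plan is to observe that every hypothesis of Proposition \ref{simplicial} is now in hand, so that the corollary follows by direct application. Recall that the three inputs required by that proposition are that $\Secat_c$ be cofibrantly generated, that it be a simplicial model category, and that it satisfy the cellularity conditions of Theorem \ref{cellular}. The first of these is part of the defining theorem for the model structure $\Secat_c$, which exhibits the explicit generating set $I_c$ of cofibrations. The second is precisely the content of the proposition immediately preceding this corollary, where axioms (SM6) and (SM7) were checked. The third was established earlier in this section, where conditions \eqref{cell1}, \eqref{cell2}, and \eqref{cell3} were verified for $\Secat_c$ directly.

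With these three facts assembled, I would invoke Proposition \ref{simplicial}: part (1) yields the existence of the $G$-model structure on $\Secat_c^G$, together with its simplicial structure; part (2) yields the orbit model category $\Secat_c^{\orbit}$, which is likewise simplicial; and part (3) supplies the Quillen equivalence $i^* \colon \Secat_c^{\orbit} \rightleftarrows \Secat_c^G \colon i_*$. This mirrors exactly the pattern used to deduce the corresponding statement for $\css$ from Proposition \ref{simplicial} just above.

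The one point deserving attention --- and the only place where the simplicial nature of $G$ enters --- is to confirm that the cellularity verification carried out earlier remains valid when $G$ is a simplicial group rather than a discrete one. I expect this to present no genuine difficulty: the arguments for conditions \eqref{cell1} and \eqref{cell3} rest only on the facts that fixed points are finite limits computed levelwise and that the tensor is given by reduction of a product, while the argument for \eqref{cell2} is a formal manipulation of pushouts. None of these uses discreteness of $G$ in an essential way, the orbits $G/H$ and their fixed points $(G/H)^K$ now being cofibrant simplicial sets as noted at the start of the section. Consequently the main obstacle, insofar as there is one, is purely bookkeeping; the substantive work has already been carried out in the preceding proposition and in the earlier cellularity check.
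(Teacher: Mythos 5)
Your proposal matches the paper's own argument: the paper likewise deduces this corollary by combining the earlier cellularity verification for $\Secat_c$ with the immediately preceding proposition that $\Secat_c$ is a simplicial model category, and then invoking Proposition \ref{simplicial}. Your added remark that the cellularity check must still make sense when $G$ is simplicial (with tensors now taken in the simplicial structure) is a point the paper passes over silently, so your version is if anything slightly more careful, but the route is the same.
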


Using another application of Theorem \ref{qe}, or rather, an analogue for simplicial model categories, we obtain the following.

\begin{cor}
Let $G$ be a simplicial group.  Then there are Quillen equivalences of simplicial model categories
\[ \xymatrix{\Secat_c^G \ar@<.5ex>[r] \ar@<.5ex>[d] & \css^G \ar@<.5ex>[l] \ar@<.5ex>[d] \\
\Secat_c^{\orbit} \ar@<.5ex>[r] \ar@<.5ex>[u] & \css^{\orbit}. \ar@<.5ex>[l] \ar@<.5ex>[u]} \]
\end{cor}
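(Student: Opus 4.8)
The plan is to build the square from three inputs: Proposition \ref{simplicial} for the vertical Quillen equivalences, the simplicial analogue of Theorem \ref{qe} applied to the Quillen equivalence $I \colon \Secat_c \rightleftarrows \css \colon R$ for the horizontal ones, and the strict compatibility of restriction with post-composition for commutativity of the square.

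I would first record the four corners. We have already checked that $\css$ and $\Secat_c$ are cofibrantly generated simplicial model categories satisfying the cellularity conditions of Theorem \ref{cellular}, so Proposition \ref{simplicial} applies to each. This produces the simplicial model categories $\css^G$, $\css^{\orbit}$, $\Secat_c^G$, and $\Secat_c^{\orbit}$, together with the two vertical Quillen equivalences $i^* \colon \css^{\orbit} \rightleftarrows \css^G \colon i_*$ and $i^* \colon \Secat_c^{\orbit} \rightleftarrows \Secat_c^G \colon i_*$.

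For the horizontal maps I would apply the simplicial version of Theorem \ref{qe} to the pair $I \dashv R$, obtaining induced Quillen equivalences $\Secat_c^G \rightleftarrows \css^G$ and $\Secat_c^{\orbit} \rightleftarrows \css^{\orbit}$ whose left adjoints are given by post-composition with $I$. Commutativity is then formal: the vertical left adjoints $i^*$ are restriction along the simplicial functor $i \colon G \rightarrow \orbit$, so each left adjoint in the square is either a restriction or a post-composition, and these two operations commute on the nose; the right adjoints then commute up to natural isomorphism by uniqueness of adjoints.

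The main obstacle is verifying that $I \dashv R$ supplies valid input to the simplicial analogue of Theorem \ref{qe}, that is, that it is suitably compatible with the simplicial enrichments. The encouraging facts are that $I$ is a simplicial functor, since the mapping spaces of $\Secat_c$ were defined to agree with those of $\css$, and that $I$, being the right adjoint of the reduction functor $(-)_r$, preserves all limits and hence commutes with the fixed-point functors $(-)^H$. The delicate point is that $I$ is not a simplicial left adjoint: because the tensor in $\Secat_c$ is $X \otimes \Delta[n] = (X \times \Delta[n])_r$ rather than $X \times \Delta[n]$, the inclusion $I$ does not preserve tensors on the nose, so $R$ is not a simplicial functor and $I \dashv R$ is not a strict simplicial adjunction. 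I expect this to be reparable, since for $X$ a Segal precategory the comparison map $X \times \Delta[n] \rightarrow (X \times \Delta[n])_r$ should be a levelwise weak equivalence (the factor $\Delta[n]$ being contractible), so that $I$ preserves tensors up to weak equivalence and is homotopically simplicial; confirming that the cellularity-based argument behind Theorem \ref{qe} goes through with this weaker compatibility, with the right adjoint on $G$-objects produced abstractly, is where the real work lies.
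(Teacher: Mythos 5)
Your proposal is correct and takes essentially the same route as the paper, whose entire proof of this corollary is the single sentence that it follows from ``another application of Theorem \ref{qe}, or rather, an analogue for simplicial model categories,'' applied to $I \colon \Secat_c \rightleftarrows \css \colon R$, with the vertical equivalences coming from Proposition \ref{simplicial} exactly as you describe. The enrichment subtlety you flag---that $I$ preserves mapping spaces but not tensors, since $X \otimes K = (X \times K)_r$ in $\Secat_c$, so that $I \dashv R$ is not a strictly simplicial adjunction and the right adjoint on $G$-objects must be produced by other means---is genuine, but the paper does not address it at all, so your attempt is if anything more careful than the paper's own argument.
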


\end{document}